
\documentclass[12pt]{amsart}
\usepackage{latexsym}
\usepackage{amssymb}
\usepackage{amscd}
\usepackage{mathrsfs}
\usepackage[all]{xy}

\setlength{\textwidth}{15.0truecm}
\setlength{\textheight}{22.5truecm}
\setlength{\topmargin}{0mm}
\setlength{\oddsidemargin}{0.3cm}
\setlength{\evensidemargin}{0.3cm}
\renewcommand\a{\alpha}
\renewcommand\b{\beta}
\newcommand\g{\gamma}
\renewcommand\d{\delta}
\newcommand\la{\lambda}

\renewcommand\th{\theta}

\newcommand\s{\sigma}

\renewcommand\r{\rho}

\newcommand\vS{\varSigma}

\newcommand\vD{\varDelta}

\newcommand\ve{\varepsilon}

\newcommand\Ql{\bar{\mathbf Q}_l}
\newcommand\BA{\mathbf A}
\newcommand\BP{\mathbf P}
\newcommand\BQ{\mathbf Q}
\newcommand\BF{\mathbf F}

\newcommand\BZ{\mathbf Z}

\newcommand\BN{\mathbf N}
\newcommand\BL{\mathbf L}

\newcommand\bB{\mathbf B}

\newcommand\BU{\mathbf U}
\newcommand\BV{\mathbf V}

\newcommand\Bn{\mathbf n}

\newcommand\Bc{\mathbf c}
\newcommand\Be{\mathbf e}

\newcommand\Bk{\mathbf k}
\newcommand\Bh{\mathbf h}

\newcommand\Bd{\mathbf d}

\newcommand\Bla{\boldsymbol\lambda}

\newcommand\CP{\mathcal{P}}

\newcommand\SC{\mathscr{C}}

\newcommand\SL{\mathscr{L}}

\newcommand\SO{\mathscr{O}}

\newcommand\SX{\mathscr{X}}

\newcommand\Fg{\mathfrak g}

\newcommand\Fs{\mathfrak s}
\newcommand\Fl{\mathfrak l}

\newcommand\iv{^{-1}}
\newcommand\wh{\widehat}
\newcommand\wt{\widetilde}

\newcommand\ck{^{\vee}}

\newcommand\ol{\overline}
\newcommand\ul{\underline}

\newcommand\lra{\leftrightarrow}

\newcommand\lv{\prec}
\newcommand\gv{\succ}
\newcommand\lve{\preceq}

\newcommand\IC{\operatorname{IC}}

\newcommand\ch{\operatorname{ch}}

\renewcommand\Im{\operatorname{Im}}
\newcommand\Dim{\operatorname{Dim}}
\newcommand\nat{^{\natural}}

\newcommand\cl{\operatorname{cl}}

\newcommand\re{\operatorname{re}}
\newcommand\im{\operatorname{im}}

\newcommand{\isom}{\,\raise2pt\hbox{$\underrightarrow{\sim}$}\,}
\numberwithin{equation}{section}

\newtheorem{thm}{Theorem}[section]
\newtheorem{lem}[thm]{Lemma}
\newtheorem{cor}[thm]{Corollary}
\newtheorem{prop}[thm]{Proposition}

\def \para#1{\par\medskip\textbf{#1}
              \addtocounter{thm}{1}}

\def \remark#1{\par\medskip\noindent
                \textbf{Remark #1}
                \addtocounter{thm}{1}}

\def \remarks#1{\par\medskip\noindent
                \textbf{Remarks #1}
                \addtocounter{thm}{1}}

\begin{document}
\setlength{\baselineskip}{4.9mm}
\setlength{\abovedisplayskip}{4.5mm}
\setlength{\belowdisplayskip}{4.5mm}
\renewcommand{\theenumi}{\roman{enumi}}
\renewcommand{\labelenumi}{(\theenumi)}
\renewcommand{\thefootnote}{\fnsymbol{footnote}}
\renewcommand{\thefootnote}{\fnsymbol{footnote}}
\allowdisplaybreaks[2]
\parindent=20pt
\medskip
\begin{center}
{\bf Diagram automorphisms and canonical bases \\ 
for quantum affine algebras, II}  
\par
\vspace{1cm}
Ying Ma, Toshiaki Shoji and Zhiping Zhou
\\
\title{}
\end{center}

\begin{abstract}
Let $\BU_q^-$ be the negative part of 
the quantum enveloping algebra, and 
$\s$ the algebra automorphism on $\BU_q^-$
induced from a diagram automorphism.  
Let $\ul\BU_q^-$ be the quantum algebra obtained 
from $\s$, and $\wt\bB$ (resp. $\wt{\ul\bB}$) 
the canonical signed basis of $\BU_q^-$ 
(resp. $\ul\BU_q^-$).   Assume that $\BU_q^-$ 
is simply-laced of finite or affine type.  In our previous
papers [SZ1, 2], we have proved by an elementary method, 
that there exists a natural bijection 
$\wt\bB^{\s} \simeq \wt{\ul\bB}$ in the case where 
$\s$ is admissible.  
In this paper, we show that such a bijection exists
even if $\s$ is not admissible, possibly except 
some small rank cases.  
\end{abstract}

\maketitle
\pagestyle{myheadings}


\begin{center}
{\sc Introduction}
\end{center}
\par\medskip
This paper is a continuation of [SZ1, 2]. 
Let $X$ be a simply-laced Cartan datum with the vertex set $I$, and $\BU_q^-$
the negative part of the quantum enveloping algebra associated to $X$.
We assume that $X$ is irreducible, of finite or affine type, 
and let $\s: I \to I$ be the diagram automorphism of $X$.  We further assume that 
$\s$ is admissible (see 1.3 for the definition). 
Then the order $\ve$ of $\s$ is 2, 3 or 4. $\s$ induces an algebra automorphism 
of $\BU_q^-$. Let $\ul I$ be the set of $\s$-orbits in $I$.  
One can 
construct the Cartan datum $\ul X$ with the vertex set $\ul I$.  We denote by 
$\ul\BU_q^-$ the negative part of the quantum enveloping algebra associated to 
$\ul X$. Let $\bB$ (resp. $\ul\bB$) be the canonical basis of $\BU_q^-$ 
(resp. $\ul \BU_q^-$). Then $\s$ acts on $\bB$ as a permutation.  We denote by 
$\bB^{\s}$ the set of $\s$-fixed elements in $\bB$.  It is known by Lusztig [L1]
that there exists a natural bijection $\bB^{\s} \simeq \ul\bB$. 
\par
Let $\BA = \BZ[q, q\iv]$, and ${}_{\BA}\BU_q^-$ Lusztig's integral form of 
$\BU_q^-$, which is an $\BA$-subalgebra of $\BU_q^-$. Assume that 
$\ve = 2, 3$, and let $\BF = \BZ/\ve\BZ$ be the finite field of $\ve$ elements.
Set $\BA' = \BF[q, q\iv]$, and define ${}_{\BA'}\BU_q^{-,\s}$ as the $\BA'$-subalgebra
of $\BA'\otimes_{\BA}{}_{\BA}\BU_q^-$ 
consisting of $\s$-fixed elements. 
The $\BA'$-algebra ${}_{\BA'}\ul\BU_q^-$ is defined as 
$\BA'\otimes_{\BA}{}_{\BA}\ul\BU_q^-$.
In [SZ1, 2], 
it is shown that, in the case where $\s$ is admissible with $\ve \ne 4$, 
there exists an $\BA'$-algebra isomorphism $\Phi : {}_{\BA'}\ul\BU_q^- \isom \BV_q$, 
where $\BV_q$ is a certain quotient algebra of ${}_{\BA'}\BU_q^{-,\s}$. 
By using this isomorphism, we have constructed, in an elementary way, 
the canonical signed basis $\wt{\ul\bB} = \ul\bB \sqcup -\ul\bB$ of $\ul\BU_q^-$, 
and a bijection $\wt\bB^{\s} \simeq \wt{\ul\bB}$, where $\wt\bB$ is the canonical 
signed basis of $\BU_q^-$. 
This result was generalized in [MSZ] to the case where $X$ is of general Kac-Moody type,
$\s$ is admissible, and $\ve$ is a power of a prime number. 
\par
In this paper, we consider the case where $\s$ is not admissible.  
Already in [L2], Lusztig proved that in the case where $X$ is of finite type, 
the bijection $\bB^{\s} \simeq \ul\bB$ still holds. So, one can expect that 
our isomorphism ${}_{\BA'}\ul\BU_q^- \isom \BV_q$ will have a generalization 
for non-admissible cases. 
However, in the case where $X = A_n^{(1)}$, and $\s$ is
the one corresponding to the cyclic quiver, there does not exist $\s$-stable
canonical basis (see Remark 1.9).  So we must exclude this case. 
Then we show in Theorem 2.6, that similar results as in [SZ1, 2] hold 
for $X$ of finite or affine type, possibly except  
$\ul X$ is of type $A_2^{(2)}$ or $A_1^{(1)}$. 
The latter cases are excluded simply because 
the computation becomes more complicated. 
It is likely that our results will hold in those cases. 
\par
Note that our discussion in [SZ1, 2] heavily depends on the 
property of PBW-bases. 
But in the non-admissible case, the action of $\s$ on PBW-bases becomes quite 
different from the admissible case. The simplest example is that $X$ is of type $A_2$ 
and $\ul X$ is of type $C_1$.  In this case, there does not exist $\s$-stable PBW-bases 
except the trivial one. Thus the usual PBW-bases are not so convenient for 
the non-admissible case.  The main ingredient for our discussion is 
the modified PBW-bases, which is defined by mixing the usual PBW-bases and 
the canonical bases for $A_2$. By using the modified PBW-bases, one can prove that $\Phi$ is 
a homomorphism. The injectivity of $\Phi$ is proved by a general argument as in 
[MSZ].  In the case where $X$ is of finite type, $\s$ acts on the set of modified
PBW-bases, and the surjectivity is proved in Theorem 3.7, 
by a similar argument as in [SZ1]. 
In the affine case, the modified PBW-bases are constructed, by making use of 
the PBW-bases associated to the convex order due to Muthiah-Tingley [MT] as in [SZ2]. 
Since it is not certain whether $\s$ leaves 
the set of modified PBW-bases invariant, we prove the surjectivity 
in Proposition 6.21, by using some properties of Kashiwara operators. 
Note, in contrast to the discussion in [SZ2], that we need to use modified PBW-bases 
in order to define Kashiwara operators for the $\s$-setup. 
Finally, we give an explicit description of
the bijection $\wt\bB^{\s} \isom \wt{\ul\bB}$ in terms of modified PBW-bases.          

\par\bigskip
\section{Diagram automorphisms}

\para{1.1.}
We basically follow the notation in [MSZ, 1].
Let $X = (I, (\ ,\ ))$ be a symmetric Cartan datum, 
where $I$ is a finite set, and $(\ ,\ )$ is a symmetric bilinear form on 
the vector space $\bigoplus_{i \in I}\BQ\a_i$ with basis 
$\{ \a_i \mid i \in I\}$.  Hence the Cartan matrix 
$A = (a_{ij})$ is symmetric, where 
$a_{ij} = \frac{2(\a_i,\a_j)}{(\a_j,\a_j)}$.
Let $Q = \sum_{i \in I}\BZ\a_i$ be the root lattice of $X$. 
Set $Q_+ = \sum_{i \in I}\BZ_{\ge 0}\a_i$ and $Q_{-} = - Q_+$.   
Let $\BU_q^-$ be the negative part of the quantized enveloping algebra 
$\BU_q$ over $\BQ(q)$ associated to $X$ with generators $f_i \ (i \in I)$. 
It is known by Lusztig [L1] that $\BU_q^-$ has the 
canonical basis $\bB$.  
\par
Let $\s : I \to I$ be a diagram automorphism on $X$, namely 
$\s$ satisfies the property $(\a_{\s(i)}, \a_{\s(j)}) = (\a_i, \a_j)$
for any $i,j \in I$. 
Then $\s$ induces an automorphism 
$\s : \BU_q^- \to \BU_q^-$ as $\BQ(q)$-algebras, by $f_i \mapsto f_{\s(i)}$. 
We denote by $\BU_q^{-,\s}$ 
the fixed point subalgebra of $\BU_q^-$ by $\s$. 
$\s$ induces a permutation on the canonical basis $\bB$.  We denote by 
$\bB^{\s}$ the set of $\s$-fixed elements in $\bB$.  
The notion of canonical signed bases is given as in [MSZ, 1.18].  
Then $\wt\bB = \bB \sqcup -\bB$ coincides with the canonical signed basis. 
We denote by $\wt\bB^{\s}$ the set of $\s$-fixed elements in $\wt\bB$. 

\para{1.2.}
Let $\BA = \BZ[q, q\iv]$.  We denote by ${}_{\BA}\BU_q^-$ Lusztig's 
integral form of $\BU_q^-$, i.e., ${}_{\BA}\BU_q^-$ is the $\BA$-subalgebra 
of $\BU_q^-$ generated by $f_i^{(n)} = f_i^n/[n]^!_{d_i}$ for $i \in I, n \in \BN$, 
where $d_i = (\a_i,\a_i)/2 \in \BZ_{>0}$. 
$\s$ stabilizes ${}_{\BA}\BU_q^-$, and set 
${}_{\BA}\BU_q^{-,\s} = \BU_q^{-,\s} \cap {}_{\BA}\BU_q^-$, the $\BA$-subalgebra 
of ${}_{\BA}\BU_q^-$ consisting of $\s$-fixed elements. 
\par
We assume that the order of $\s$ is a power of a prime number $p$, and 
let $\BF = \BZ/p\BZ$ be the finite field of $p$-elements. 
Set $\BA' = \BF[q, q\iv]$, and consider an $\BA'$-algebra ${}_{\BA'}\BU_q^-$ by 
\begin{equation*}
{}_{\BA'}\BU_q^{-,\s} = \BA' \otimes_{\BA}{}_{\BA}\BU_q^{-,\s} 
       \simeq {}_{\BA}\BU_q^{-,\s}/p({}_{\BA}\BU_q^{-,\s}). 
\end{equation*}
For each $x \in \BU_q^-$, we denote by $O(x)$ the orbit sum of $x$ defined by 
$O(x) = \sum_{0 \le i < k}\s^i(x)$, where $k$ is the smallest integer such that 
$\s^k(x) = x$. 
Hence $O(x) \in \BU_q^{-,\s}$. $O(x)$ is similarly defined for ${}_{\BA'}\BU_q^-$. 
Let $J$ be the $\BA'$-submodule of ${}_{\BA'}\BU_q^{-,\s}$ generated by 
$O(x)$ for $x \in {}_{\BA'}\BU_q^-$ such that $\s(x) \ne x$. 
Then $J$ is the two-sided ideal of ${}_{\BA'}\BU_q^{-,\s}$.  We define an 
$\BA'$-algebra $\BV_q$ by $\BV_q = {}_{\BA'}\BU_q^{-,\s}/J$. Let 
$\pi : {}_{\BA'}\BU_q^{-\s} \to \BV_q$ be the natural projection. 

\para{1.3.}
Let $\s : I \to I$ be the diagram automorphism.  We denote by 
$\ul I$ the set of $\s$-orbits in $I$. 
$\s : I \to I$ is called admissible, if for any $\eta \in \ul I$, 
$(\a_i, \a_j) = 0$ for all $i \ne j \in \eta$. The algebra $\BV_q$ 
was defined in [MSZ, 3.2] under the assumption that $\s$ is admissible.
But, as is seen from 1.2,  the definition of $\BV_q$ makes sense even 
if $\s$ is not admissible.
\par
Assume that $\s$ is admissible. Then by [MSZ, 2.1], one can construct 
a Cartan datum $\ul X = (\ul I, (\ ,\ )_1)$ induced from $(X, \s)$.  
We denote by $\ul\BU_q^-$ the negative part of the quantized enveloping
algebra $\ul\BU_q$ associated to $\ul X$. 
The algebras ${}_{\BA}\ul\BU_q^-$
and ${}_{\BA'}\ul\BU_q^-$ are defined similarly to the case of $\BU_q^-$. 
\par
For each $\eta \in \ul I, a \in \BN$, set $\wt f_{\eta}^{(a)} = \prod_if_i^{(a)}$.
Since $\s$ is admissible, $\wt f_{\eta}^{(a)}$ does not depend on the order of 
the product, and $\wt f_{\eta}^{(a)} \in {}_{\BA}\BU_q^{-,\s}$. We denote its image
in ${}_{\BA'}\BU_q^{-,\s}$ also by $\wt f_{\eta}^{(a)}$.  
We define $g_{\eta}^{(a)} \in \BV_q$ by 
\begin{equation*}
\tag{1.3.1}
g_{\eta}^{(a)} = \pi(\wt f_{\eta}^{(a)}).
\end{equation*} 
Note that ${}_{\BA}\ul\BU_q^-$ is generated by 
$\ul f_{\eta}^{(a)} = \ul f_{\eta}^a/[a]^!_{d_{\eta}}$ for 
$\eta \in \ul I$ and $a \in \BN$.  We denote by the same symbol $\ul f_{\eta}^{(a)}$ 
its image in ${}_{\BA'}\ul\BU_q^-$.
\par
The following result was proved in Theorem 3.4 and Theorem 4.18 
(see also Remark 4.19) 
in [MSZ]

\begin{thm}  
Assume that $\s$ is admissible, and the order of $\s$ is a power of
a prime number $p$.  Then 
\begin{enumerate}
\item \ The assignment $\ul f_{\eta}^{(a)} \mapsto g_{\eta}^{(a)}$ 
gives an $\BA'$-algebra isomorphism 
$\Phi : {}_{\BA'}\ul\BU_q^- \isom \BV_q$. 
\item \ There exists the canonical signed basis $\wt{\ul\bB}$ of $\ul\BU_q^-$, 
and a natural bijection $\xi : \wt\bB^{\s} \isom \wt{\ul\bB}$.
\end{enumerate}
\end{thm}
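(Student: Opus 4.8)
The plan is to exploit the fact that, as an $\BA'$-module, $\BV_q$ carries a basis indexed by $\bB^\s$, and then to match this basis against $\ul\bB$ through Lusztig's bijection. First I would record the module-theoretic picture underlying $\BV_q$. Since the canonical basis $\bB$ is an $\BA$-basis of ${}_{\BA}\BU_q^-$ that is permuted by $\s$, the base change ${}_{\BA'}\BU_q^- = \BA'\otimes_{\BA}{}_{\BA}\BU_q^-$ is $\BA'$-free with basis $\bB$, and its fixed-point subalgebra ${}_{\BA'}\BU_q^{-,\s}$ is $\BA'$-free with basis the orbit sums $O(\CO)$, one for each $\s$-orbit $\CO$ in $\bB$. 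Because the order of $\s$ is a power of $p$, every non-trivial orbit has size divisible by $p$; this forces $J$ to coincide with the $\BA'$-span of the orbit sums attached to non-trivial orbits, so that $\BV_q$ is $\BA'$-free with basis $\{\pi(b) : b\in\bB^\s\}$. In particular the graded rank of $\BV_q$ in each weight $\nu\in Q_-$ equals the number of $\s$-fixed canonical basis elements of weight $\nu$.

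Second, I would check that $\Phi$ is a well-defined $\BA'$-algebra homomorphism. This amounts to verifying that the elements $g_{\eta}^{(a)} = \pi(\wt f_{\eta}^{(a)})$ satisfy the defining relations of ${}_{\BA'}\ul\BU_q^-$, namely the divided-power relations and the quantum Serre relations for the folded Cartan datum $\ul X$. Here admissibility guarantees that $\wt f_{\eta}^{(a)} = \prod_{i\in\eta} f_i^{(a)}$ is well defined, independent of the order of the factors, and lies in ${}_{\BA}\BU_q^{-,\s}$, while the hypothesis that the order of $\s$ is a power of $p$ is exactly what makes the cross terms, i.e.\ the products that do not fit a folded relation, land in $J$ after reduction modulo $p$. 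I expect this to be the main computational obstacle.

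Third, for surjectivity I would show that $\BV_q$ is generated as an $\BA'$-algebra by the $g_{\eta}^{(a)}$; given the basis description of Step 1, this reduces to expressing each $\pi(b)$ ($b\in\bB^\s$) in terms of the $g_{\eta}^{(a)}$. Injectivity then comes for free: $\BA' = \BF[q,q\iv]$ is a domain, and $\Phi$ is a surjection of $\BA'$-free modules whose graded ranks agree by Step 1 together with Lusztig's weight-preserving bijection $\bB^\s \simeq \ul\bB$; a surjection of free modules of equal finite rank over a domain is an isomorphism. This establishes (i).

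Finally, for (ii) I would transport the $\BA'$-basis $\{\pi(b) : b\in\bB^\s\}$ of $\BV_q$ through $\Phi\iv$ to an $\BA'$-basis of ${}_{\BA'}\ul\BU_q^-$, and lift it to characteristic zero. The bar-involution commutes with $\s$, descends to both $\BV_q$ and ${}_{\BA'}\ul\BU_q^-$, and is intertwined by $\Phi$; since each $b\in\bB^\s$ is bar-invariant, so are the transported elements. Checking bar-invariance together with the integrality and almost-orthonormality conditions characterizing canonical bases then identifies the lift, up to sign, with $\ul\bB$, producing the canonical signed basis $\wt{\ul\bB}$ and the bijection $\xi : \wt\bB^\s \isom \wt{\ul\bB}$ sending $b \mapsto \Phi\iv(\pi(b))$. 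The delicate point here is the passage from characteristic $p$ back to characteristic zero, which is precisely why one recovers a signed basis rather than a basis on the nose.
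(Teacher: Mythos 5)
Your Step 1 is sound, and it agrees with the setup the paper takes over from [MSZ] (note that the paper itself does not prove Theorem 1.4 but cites [MSZ, Thms.\ 3.4, 4.18]; the comparison below is with that proof, whose non-admissible analogue is carried out in this paper): since the order of $\s$ is a $p$-power and the coefficients lie in $\BF = \BZ/p\BZ$, the ideal $J$ is spanned by the orbit sums of the non-fixed elements of $\bB$, so $\pi(\bB^{\s})$ is an $\BA'$-basis of $\BV_q$. But the two load-bearing steps of the theorem are precisely the ones you defer. First, the verification that the $g_{\eta}^{(a)}$ satisfy the folded Serre and divided-power relations modulo $J$ is the main computational content of [MSZ, Thm.\ 3.4] (compare Proposition 2.8 here, whose proof occupies all of Section 4 in the non-admissible case); you flag it as ``the main computational obstacle'' but supply no argument. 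Second, your surjectivity step --- ``this reduces to expressing each $\pi(b)$ in terms of the $g_{\eta}^{(a)}$'' --- is a restatement of the problem, not a method. The actual mechanism is the decomposition $\BU_q^{-,\s} = \bigoplus_a \wt f_{\eta}^{(a)}(\BU_q^-[\eta])^{\s} \oplus K_{\eta}$ together with an induction on weights via Kashiwara-type operators, writing $b \equiv \wt f_{\eta}^{(n)}b' \mod \sum_{n'>n}\wt f_{\eta}^{(n')}{}_{\BA}\BU_q^{-,\s}\oplus {}_{\BA}K_{\eta}$ (see Propositions 6.12, 6.17, 6.20 and 6.21 here, following [MSZ, \S 4], or the PBW-basis argument of [SZ1] in finite type); nothing in your plan substitutes for this induction.

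Your injectivity shortcut has a further structural problem: equal graded ranks are obtained by citing Lusztig's bijection $\bB^{\s} \simeq \ul\bB$ from [L1], but that bijection is essentially the content of part (ii), so your Step 4 becomes circular --- and the declared point of [SZ1, 2] and [MSZ] is to reconstruct $\wt{\ul\bB}$ and the bijection $\xi$ elementarily, without invoking it. The paper's route to injectivity is instead to show that $\Phi$ is an isometry for the nondegenerate bilinear form induced on ${}_{\BF(q)}\BV_q$, via the computation of $(\wt f_{\eta}, \wt f_{\eta'})$ ([MSZ, Prop.\ 3.8]; compare Proposition 2.10 here), which uses no information about canonical bases. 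Finally, your passage from characteristic $p$ back to characteristic $0$ does not work as stated: almost-orthonormality is a characteristic-zero condition that cannot be tested on elements of ${}_{\BA'}\ul\BU_q^-$, and a bar-invariant element there has many bar-invariant lifts, so ``lifting and checking the characterizing conditions'' does not identify anything. What actually pins down $\wt{\ul\bB}$ is the compatibility of $\Phi$ with PBW-bases, i.e.\ identities of the shape $\Phi(L(\ul\Bc, \ul\Bh)) = \pi(L(\Bc,\Bh))$ (cf.\ Theorem 3.7(ii) and Theorem 6.26(i) for the non-admissible analogues), combined with the characterization of canonical bases by bar-invariance plus $\BZ[q]$-triangularity as in (3.6.1); the sign ambiguity is exactly what survives reduction mod $p$, which is why a signed basis, and only a signed basis, is recovered. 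Your closing remark correctly identifies this last phenomenon, but the argument leading to it is missing.
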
 

\para{1.5.}
We consider the case where $X$ is finite or affine type.
Let $\ve$ be the order of $\s$.
Assume that $X$ is irreducible, and $\s$ is admissible.  
In the case where $\ve = 2, 3$,  
Theorem 1.4 was proved by [SZ1, 2], by an elementary method.  
\par
In the remaining cases, $X$ are affine, and 
they are given as follows;

\begin{equation*}
\begin{aligned}
&(A_a1) &\quad X &= D_{2n}^{(1)}, &\quad \ul X &= A_{2n-2}^{(2)}, \ (n \ge 3), 
       &\quad  \ve &= 4, 
\\
&(A_a1') &\quad X &= D_4^{(1)}, &\quad \ul X &= A_2^{(2)}, 
        &\quad \ve &= 4, 
\\
&(A_a2)  &\quad X &= A_{n-1}^{(1)}, &\quad \ul X &= A_{m - 1}^{(1)}, \ (n = mc,   1 < c < n), 
        &\quad \ve &= c.  
\end{aligned}
\end{equation*}
The diagrams are given as follows;

\SelectTips{cm}{12}
\objectmargin{1pt}

\begin{equation*}
\begin{aligned}
(A_a1) \quad X &= D_{2n}^{(1)} : &\qquad
&\xygraph{
\bullet([]!{+(0,-.3)} {2}) (
   - []!{+(-1, .5)}\bullet([]!{+(0,-.3)}{0}),
   - []!{+(-1,-.5)}\bullet([]!{+(0,-.3)}{1}),
   - [r] \cdots - [r]
       \bullet([]!{+(0,-.3)}{n}) - [r] \cdots - [r] 
       \bullet([]!{+(-0.2,-.3)}{2n-2}) (
       - []!{+(1,.5)}\bullet([]!{+(0.2,-.3)}{2n-1}),
       - []!{+(1, -.5)}\bullet([]!{+(0.2, -.3)}{2n})
      ) 
 )} 
\\ \\
\ul X &= A_{2n-2}^{(2)} : &\qquad
&\xygraph{!~:{@ 2{-}|@{>}}
\bullet([]!{+(0,-.3)} {\ul 0}) : [r]
\bullet([]!{+(0,-.3)} {\ul 2}) - [r] \cdots - [r]
\bullet([]!{+(0,-.3)} {\ul{n-1}}) : [r]
\bullet([]!{+(0,-.3)} {\ul {n}})
}
\\
\\
\\
(A_a1') \quad X &= D_4^{(1)} : &\quad 
&\xygraph{
\bullet([]!{+(0,-.3)} {2}) - [r] 
\bullet([]!{+(.2, -.3)} {1}) ( 
  - []!{+(0, .9)} \bullet([]!{+(.3, 0)} {2'}), 
  - []!{+(0,-.9)} \bullet([]!{+(.3, 0)} {2'''}),
  - [r] \bullet([]!{+(0,-.3)} {2''}))
 )}
\qquad\qquad
\ul X = A_2^{(2)} : \quad
\xygraph{!~:{@ 3{-}|@{<}}
\bullet([]!{+(0,-.4)} {\ul 1}) ( : [r] \bullet([]!{+(0,-.4)} {\ul 2}, 
    []!{(-0,-.12)}  - [r]) 
}
\\
\\
(A_a2) \quad X &= A_{n - 1}^{(1)} : &\qquad
&\hspace{-5mm}\xygraph{
   \bullet([]!{+(0,-.3)}{0}) - []!{+(-2, -1)}
       \bullet([]!{+(0,-.3)}{1}) - [r] 
       \bullet([]!{+(0,-.3)}{2}) - [r] \cdots - [r]
       \bullet([]!{+(0,-.3)}{n - 2}) - [r] 
       \bullet([]!{+(0.2, -.3)}{n - 1}) - []!{+(-2, 1)} 
    }
\\
\\
\ul X &= A_{m - 1}^{(1)} : &\qquad
&\hspace{-5mm}\xygraph{
   \bullet([]!{+(0,-.3)}{\ul 0}) - []!{+(-2, -1)}
       \bullet([]!{+(0,-.3)}{\ul 1}) - [r] 
       \bullet([]!{+(0,-.3)}{\ul 2}) - [r] \cdots - [r]
       \bullet([]!{+(0,-.3)}{\ul{m - 2}}) - [r] 
       \bullet([]!{+(0.2, -.3)}{\ul{m - 1}}) - []!{+(-2, 1)} 
    }
\\
\\
\end{aligned}
\end{equation*}
In the case ($A_a1$), 
$I = \{ 0,1, \dots, 2n\}$ and $\s$ is given by 
$0 \to 2n-1 \to 1 \mapsto 2n \mapsto 0$, 
$i \longleftrightarrow 2n - i$ for $i = 2, \dots, n$. 
Thus $\ve = 4$, and 
$\ul I = \{ \ul 0, \ul i \ (2 \le i \le n-1), \ul{n}\}$.   
In the case ($A_a1'$), $I = \{ 1, 2,2',2'', 2'''\}$ and $\s$ is given by  
$\s : 2 \mapsto 2' \mapsto 2'' \mapsto 2''' \mapsto 2$, and 
$\s(1) = 1$. Thus $\ve = 4$ with 
$\ul 1 = \{ 1\}$ and $\ul 2 = \{ 2,2',2'',2'''\}$.
In the case ($A_a2$), we identify $I = \{ 0,1, \dots, n-1\}$ with 
$\BZ/n\BZ$.  Then $\s$ is given by $i \mapsto i + m$ for $i \in \BZ/n\BZ$. 
Thus $\ve = n/m = c$, and $\ul I = \{ \ul 0, \dots \ul{m - 1}\} \simeq \BZ/m\BZ$.  

\para{1.6.}
Assume that $X$ is of finite type.  In [L2], Lusztig proved
the existence of a natural bijection $\bB^{\s} \isom \ul\bB$ 
even in the case where $\s$ is not admissible. 
Thus one can expect that Theorem 1.4 can be generalized to the
case where $\s$ is not admissible.  In the remainder of this section, 
we consider the case where $X$ is finite or affine, and $\s$ is 
not admissible.  Note that $X$ is simply-laced, i.e., 
$(\a_i, \a_j) \in \{ 0, -1\}$ for $i \ne j$, and $(\a_i, \a_i) = 2$ for any $i \in I$. 
For each $\eta \in \ul I$, set $\d_{\eta} = 1$ if $(\a_i,\a_j) = 0$ for any 
$i \ne j$ in $\eta$, and $\d_{\eta} = 2$ otherwise. 
Note that $\s$ is admissible if and only if $\d_{\eta} = 1$ for any $\eta \in \ul I$. 
We define a symmetric bilinear form $(\ ,\ )_1$ on the vector space
$\bigoplus_{\eta \in \ul I}\BQ \a_{\eta}$ over $\BQ$ by 
\begin{equation*}
\tag{1.6.1}
(\a_{\eta}, \a_{\eta'})_1 =  \begin{cases}
                    2\d_{\eta}|\eta|  &\quad\text{ if } \eta = \eta', \\
                    -\d_{\eta}\d_{\eta'}
         \sharp\{ (i,j) \in \eta \times \eta' \mid (\a_i, \a_j) \ne 0 \}
                                      &\quad\text{ if } \eta \ne \eta'.
                             \end{cases}
\end{equation*}
Then $\ul X = (\ul I, (\ , \ ))$ gives a Cartan datum since 
$(\a_{\eta}, \a_{\eta})_1 \in 2\BZ_{> 0}$, and
\begin{equation*}
\frac{2(\a_{\eta}, \a_{\eta'})_1}{(\a_{\eta'}, \a_{\eta'})_1}
    = \frac{1}{\d_{\eta'}|\eta'|}\sum_{j \in \eta'}
                \d_{\eta}\d_{\eta'}\sum_{i \in \eta}(\a_i, \a_j) \in \BZ_{\le 0}.
\end{equation*}  

\para{1.7.}
Assume that $\s$ is non-admissible. If $X$ is irreducible of finite type, 
only the following case occurs; $X $ is of type $A_{2n}$ with 
$I = \{ 1,2, \dots, 2n\}$, and $\s : i \lra 2n - i+1$ with $\ve = 2$. 
Then $\ul X$ is of type 
$C_n$ with $\ul I = \{ \ul 1, \dots, \ul n\}$. 
\par\bigskip

\begin{equation*}
\begin{aligned}
(F_n1) \quad X &= A_{2n} : &\qquad
&\xygraph{
\bullet([]!{+(0,-.3)} {1}) 
       - [r] \cdots - [r]
       \bullet([]!{+(0,-.3)}{n}) - [r] 
       \bullet([]!{+(0,-.3)}{n+1})  - [r] \cdots - [r]
       \bullet([]!{+(0,-.3)}{2n})
    }
\\ \\
\ul X &= C_n : &\qquad
&\xygraph{!~:{@ 2{-}|@{<}}
\bullet([]!{+(0,-.3)} {\ul 1}) - [r]
\bullet([]!{+(0,-.3)} {\ul 2}) - [r] \cdots - [r]
\bullet([]!{+(0,-.3)} {\ul{n-1}}) : [r]
\bullet([]!{+(0,-.3)} {\ul n})
}
\end{aligned}
\end{equation*} 
\par\medskip
with $\s : i \lra 2n+1-i$ for $1 \le i \le 2n$.  

\par\medskip
If $X$ is irreducible of affine type, the following cases occur.
\par\medskip\noindent
\begin{equation*}
\begin{aligned}
&(A_n1)  &\quad X &= D_{2n+1}^{(1)}, &\quad  \ul X &= A_{2n-1}^{(2)},\  (n \ge 3), 
       &\quad    \ve &= 2, 
\\
&(A_n2) &\quad X &= D_{2n+1}^{(1)}, &\quad  \ul X &= C_{n-1}^{(1)}, \ (n \ge 3),  
&\quad \ve &= 4, 
\\
&(A_n3) &\quad X &= A_{2n}^{(1)}, &\quad \ul X &= A_{2n}^{(2)}, \ (n \ge 2),  
&\quad \ve &= 2,
\\
&(A_n4) &\quad  X &= A_{2n+1}^{(1)}, &\quad  \ul X &= C_n^{(1)}, \ (n \ge 2),  
&\quad \ve &= 2, 
\\
&(A_n2') &\quad X &= D_5^{(1)}, &\quad \ul X &= A_1^{(1)}, &\quad \ve &= 4, 
\\
&(A_n3') &\quad  X &= A_2^{(1)}, &\quad \ul X &= A_2^{(2)}, 
&\quad \ve &= 2,
\\
&(A_n4') &\quad  X &= A_3^{(1)}, &\quad \ul X &= A^{(1)}_1, 
&\quad \ve &= 2,
\\
&(A_n5) &\quad X &= A_{n-1}^{(1)}, &\quad \ul X &= A_1, 
&\quad  \ve &= n.  
\end{aligned}
\end{equation*}

The diagrams are given as follows.

\begin{equation*}
\begin{aligned}
  (A_n1) \quad X &= D_{2n+1}^{(1)} : &\qquad
&\xygraph{
\bullet([]!{+(0,-.3)} {2}) (
   - []!{+(-1, .5)}\bullet([]!{+(0,-.3)}{0}),
   - []!{+(-1,-.5)}\bullet([]!{+(0,-.3)}{1}),
   - [r] \cdots - [r]
       \bullet([]!{+(0,-.3)}{n}) - [r] 
       \bullet([]!{+(0,-.3)}{n + 1}) - [r]  \cdots - [r] 
       \bullet([]!{+(-0.2,-.3)}{2n-1}) (
       - []!{+(1,.5)}\bullet([]!{+(0,-.3)}{2n}),
       - []!{+(1, -.5)}\bullet([]!{+(0, -.3)}{2n+1})
      ) 
 )} 
\\ \\
\ul X &= A_{2n-1}^{(2)} : &\qquad
&\xygraph{
    \bullet([]!{+(0,-.3)} {\ul 2}) (
   - []!{+(-1, .5)}\bullet([]!{+(0,-.3)}{\ul 0}),
   - []!{+(-1,-.5)}\bullet([]!{+(0,-.3)}{\ul 1}),
   - [r] \cdots - [r]  
!~:{@ 2{-}|@{<}}
\bullet([]!{+(0,-.3)} {\ul {n-1}}) : [r]
\bullet([]!{+(0,-.3)} {\ul {n}})
}
\\
\\
(A_n2) \quad  X &= D_{2n+1}^{(1)} : &\qquad
&\xygraph{
\bullet([]!{+(0,-.3)} {2}) (
   - []!{+(-1, .5)}\bullet([]!{+(0,-.3)}{0}),
   - []!{+(-1,-.5)}\bullet([]!{+(0,-.3)}{1}),
   - [r] \cdots - [r]
       \bullet([]!{+(0,-.3)}{n}) - [r] 
       \bullet([]!{+(0,-.3)}{n + 1}) - [r]  \cdots - [r] 
       \bullet([]!{+(-0.2,-.3)}{2n-1}) (
       - []!{+(1,.5)}\bullet([]!{+(0,-.3)}{2n}),
       - []!{+(1, -.5)}\bullet([]!{+(0, -.3)}{2n+1})
      ) 
 )} 
\\ \\
\ul X &= C_{n-1}^{(1)} : &\qquad
&\xygraph{!~:{@ 2{-}|@{>}}
\bullet([]!{+(0,-.3)} {\ul 0}) : [r]
\bullet([]!{+(0,-.3)} {\ul 2}) - [r] \cdots - [r]
!~:{@ 2{-}|@{<}}
\bullet([]!{+(0,-.3)} {\ul {n-1}}) : [r]
\bullet([]!{+(0,-.3)} {\ul {n}})
}
\\
\\
(A_n3) \quad  X &= A_{2n}^{(1)} : &\qquad
&\xygraph{
\bullet([]!{+(0,-.3)} {0}) (
   - []!{+(1, .5)}\bullet([]!{+(0,-.3)}{1}) - [r]
    \bullet([]!{+(0,-.3)}{2}) - [r] \cdots - [r] 
    \bullet([]!{+(0,-.3)}{n-1}) - [r] 
    \bullet([]!{+(0.2,-.3)}{n}) - []!{+(0,-1)},
   - []!{+(1,-.5)}\bullet([]!{+(0,-.3)}{2n}) - [r] 
       \bullet([]!{+(0,-.3)}{2n-1}) - [r] \cdots - [r]
       \bullet([]!{+(0,-.3)}{n + 2}) - [r] 
       \bullet([]!{+(0.2, -.3)}{n + 1}) 
       )} 
\\ \\
\ul X &= A_{2n}^{(2)} : &\qquad
&\xygraph{!~:{@ 2{-}|@{<}}
\bullet([]!{+(0,-.3)} {\ul 0}) : [r]
\bullet([]!{+(0,-.3)} {\ul 1}) - [r]
\bullet([]!{+(0,-.3)} {\ul 2}) - [r] \cdots - [r]
\bullet([]!{+(0,-.3)} {\ul {n-1}}) : [r]
\bullet([]!{+(0,-.3)} {\ul {n}})
}
\\  \\
(A_n4) \quad  X &= A_{2n+1}^{(1)} : &\qquad
&\hspace{-5mm}\xygraph{
   \bullet([]!{+(-0.2,-.3)}{0}) - []!{+(0, -1)}
       \bullet([]!{+(0,-.3)}{2n+1}) - [r] 
       \bullet([]!{+(0,-.3)}{2n}) - [r] \cdots - [r]
       \bullet([]!{+(0,-.3)}{n + 2}) - [r] 
       \bullet([]!{+(0.2, -.3)}{n + 1}), 
     - [r]
    \bullet([]!{+(0,-.3)}{1}) - [r] \cdots - [r] 
    \bullet([]!{+(0,-.3)}{n-1}) - [r] 
    \bullet([]!{+(0.2,-.3)}{n}) - []!{+(0,-1)}
    } 
\\
\\
\ul X &= C_{n}^{(1)} : &\qquad
&\xygraph{!~:{@ 2{-}|@{>}}
\bullet([]!{+(0,-.3)} {\ul 0}) : [r]
\bullet([]!{+(0,-.3)} {\ul 1}) - [r] \cdots - [r]
!~:{@ 2{-}|@{<}}
\bullet([]!{+(0,-.3)} {\ul {n-1}}) : [r]
\bullet([]!{+(0,-.3)} {\ul {n}})
}
\end{aligned}
\end{equation*}

\begin{equation*}
\begin{aligned}
(A_n2') \quad  X &= D_5^{(1)} : &\quad
&\xygraph{
\bullet([]!{+(0,-.3)} {2}) (
   - []!{+(-1, .5)}\bullet([]!{+(0,-.3)}{0}),
   - []!{+(-1,-.5)}\bullet([]!{+(0,-.3)}{1}),
   - [r] 
          \bullet([]!{+(-0,-.3)}{3}) (
       - []!{+(1,.5)}\bullet([]!{+(0,-.3)}{4}),
       - []!{+(1, -.5)}\bullet([]!{+(0, -.3)}{5})
      ) 
 )} 
\quad
&\ul X = A^{(1)}_1 : &\quad
&\xygraph{!~:{@ 2{-}}
\bullet([]!{+(0,-.3)} {\ul 0}) : [r]
\bullet([]!{+(0,-.3)} {\ul 1}) 
}
\\  \\
(A_n3') \quad  X &= A_{2}^{(1)} : &\quad
&\xygraph{
\bullet([]!{+(0,-.3)} {0}) (
   - []!{+(1, .5)}\bullet([]!{+(0.2,-.3)}{1})
   - []!{+(0,-1)},
   - []!{+(1,-.5)}\bullet([]!{+(0.2,-.3)}{2}) 
     )} 
\quad 
&\ul X = A_{2}^{(2)} : &\quad
&\xygraph{!~:{@ 3{-}|@{<}}
\bullet([]!{+(0,-.4)} {\ul 0}) ( : [r] \bullet([]!{+(0,-.4)} {\ul 1}, 
    []!{(-0,-.12)}  - [r]) 
}
\\  \\
(A_n4') \quad  X &= A_{3}^{(1)} : &\quad
&\xygraph{
   \bullet([]!{+(-0.2,-.3)}{0}) - []!{+(0, -1)}
       \bullet([]!{+(-0.2,-.3)}{3}) - [r] 
       \bullet([]!{+(0.2, -.3)}{2}), 
     - [r]
       \bullet([]!{+(0.2,-.3)}{1}) - []!{+(0,-1)}
    } 
\quad
&\ul X = A^{(1)}_1 : &\quad
&\xygraph{!~:{@ 2{-}}
\bullet([]!{+(0,-.3)} {\ul 0}) : [r]
\bullet([]!{+(0,-.3)} {\ul 1}) 
}
\\  \\
(A_n5) \quad  X &= A_{n - 1}^{(1)} : &\quad
&\hspace{-5mm}\xygraph{
   \bullet([]!{+(0,-.3)}{0}) - []!{+(-2, -1)}
       \bullet([]!{+(0,-.3)}{1}) - [r] 
       \bullet([]!{+(0,-.3)}{2}) - [r] \cdots - [r]
       \bullet([]!{+(0,-.3)}{n - 2}) - [r] 
       \bullet([]!{+(0.2, -.3)}{n - 1}) - []!{+(-2, 1)} 
    } 
\quad
&\ul X = A_1 : &\quad
&\xygraph{
\bullet([]!{+(0,-.3)} {\ul 0}) 
}
\end{aligned}
\end{equation*}

\par\medskip
The actions of $\s$ on $I$ are as follows.
In ($A_n1$), $\s : i \lra 2n+1- i$ for $0 \le i \le 2n+1$.  
In ($A_n2$), $\s : 0 \mapsto 2n \mapsto 1 \mapsto 2n+1 \mapsto 0$ and 
$i \lra 2n + 1 - i$ for $2 \le i \le 2n-1$. 
In ($A_n3$), $\s : i \lra 2n + 1 -i$ for $1 \le i \le 2n$, and 
$\s(0) = 0$. In ($A_n4$), $\s : i \lra 2n+1 -i$ for $0 \le i \le 2n+1$. 
In ($A_n2'$), $\s: 0 \mapsto 4 \mapsto 1 \mapsto 5 \mapsto 0$ and 
$2 \lra 3$. 
In ($A_n3'$), $\s: 1 \lra 2$ and $\s(0) = 0$. 
In ($A_n4'$), $\s : 0 \lra 3, 1 \lra 2$. 
In ($A_n5$), $\s : 0 \mapsto 1 \mapsto 2 \mapsto \cdots \mapsto n -1  \mapsto 0$. 

\remarks{1.8.}
(i) \ 
In [L2], Lusztig defines a Cartan datum $\ul X$ in the non-admissible case, 
by using the inner product which corresponds to 
$(\a_{\eta} ,\a_{\eta'} )_1/\d$, where $\d = \max\{ \d_{\eta}, \d_{\eta'}\}$.  
Although the Cartan matrix is the same, (1.6.1) is more convenient for our later discussion. 
\par
(ii) \ The numbering of $A_{2n}^{(2)}$ obtained from $A_{2n+1}^{(1)}$ 
by non-admissible $\s$ (the cases ($A_n3$), ($A_n3'$)) is reverse to the order obtained 
from $D_{2n+1}^{(1)}$ by admissible $\s$ (see 5.1 (5), (6) in [SZ2]). 
The numbering of ($A_n3$), ($A_n3'$)) coincides with those in Kac's textbook [K], 
but is reverse to the one in Beck-Nakajiam [BN]. 

\remark{1.9.}
In the case ($A_n5$), the $\s$-invariant canonical basis 
does not exist. We follow the description in the lecture note by Schiffmann [S, 2.4].
Consider the cyclic quiver $\overrightarrow{Q}$ associated to $X$, where 
the orientation is given by 
$\s: 0 \mapsto 1 \mapsto 2 \mapsto \cdots \mapsto n -1 \mapsto 0$. 
Then $\s$ gives an automorphism of the quiver $\overrightarrow{Q}$.
We identify $I$ with $\BZ/n\BZ$. We denote by $I_{[i;m]}$ ($i \in I, m \in \BN$) 
the unique nilpotent
indecomposable representation of $\overrightarrow{Q}$ with socle $\ve_i$ and length $m$.     
Let $\Pi^n$ be the set of $n$-tuple of partitions $\Bla = (\la^{(1)}, \dots, \la^{(n)})$, 
where $\la^{(i)} = (\la^{(i)}_1 \ge \la^{(i)}_2 \ge \cdots)$ is a partition.
The set of isomorphism classes of nilpotent representations of $\overrightarrow{Q}$ 
is identified with $\Pi^n$ by the correspondence
\begin{equation*}
\Bla = (\la^{(1)}, \dots, \la^{(n)}) \mapsto 
    M_{\Bla} = \bigoplus_{i \in I}\bigoplus_jI_{[i; \la^{(i)}_j]}.
\end{equation*}
Let $\nu = \Dim M_{\Bla}$, and consider the representation space 
$E(\overrightarrow{Q}, \nu)$ of $\overrightarrow{Q}$. We denote by 
$\SO_{\Bla}$ the nilpotent orbit in $E(\overrightarrow{Q},\nu)$ corresponding to
$M_{\Bla}$. Let $\BP_{\Bla} = \IC(\ol\SO_{\Bla},\Ql)$ be the simple 
perverse sheaf (up to shift) on $E(\overrightarrow{Q}, \nu)$ corresponding to the orbit $\SO_{\Bla}$.
Let $\CP_{\overrightarrow{Q}}$ be the set of simple perverse sheaves as defined 
in [S, 1.4]. The following result was proved by Lusztig [L3], [L4].
\par\medskip\noindent
(1.9.1) \ 
Assume that $n > 1$.  Then 
\begin{equation*}
\CP_{\overrightarrow{Q}} = \{ \BP_{\Bla} \mid \Bla \in \Pi^n, 
                             \Bla \text{ : aperiodic } \},
\end{equation*}
where $\Bla= (\la^{(i)}_j)  \in \Pi^n$ is called aperiodic if 
$\la^{(1)}, \cdots, \la^{(n)}$ have no common parts, namely for any 
integer $c > 0$, there exists $i$ such that $c$ does not appear 
as $\la^{(i)}_j$. 
\par\medskip
$\s$ maps $M_{\Bla}$ onto $M_{\s(\Bla)}$, where 
$\s(\Bla) = (\la^{(2)}, \la^{(3)}, \dots, \la^{(n)}, \la^{(1)})$. Hence 
$\s$ acts on the set of $\{ \BP_{\Bla} \mid \Bla \in \Pi^n\}$ as a permutation, 
$\BP_{\Bla} \mapsto \BP_{\s(\Bla)}$.  
In particular, $\BP_{\Bla}$ is $\s$-invariant if and only if 
$\la^{(1)} = \la^{(2)} = \cdots = \la^{(n)}$, in which case $\Bla$ is not aperiodic.
Thus (1.9.1) implies that 
$\s$ acts on $\CP_{\overrightarrow{Q}}$ as a permutation, and there does not
exist $\s$-invariant element.
From the definition of canonical 
basis, there exits a bijection $\CP_{\overrightarrow{Q}} \isom \bB$, which is 
compatible with the action of $\s$. 
It follows that $\bB^{\s} = \emptyset$.
Since $\pi(\bB^{\s})$ gives a basis of $\BV_q$, we conclude that $\BV_q = 0$.
Summing up the above discussion, we have
\par\medskip\noindent
(1.9.2)\ In the case of ($A_n5$), an analogue of Theorem 1.4 does not hold. 

\para{1.10.}
In the rest of this paper, we assume that $X$ is irreducible 
of finite or affine type, and by Remark 1.9, we exclude the case ($A_n5$). 
Let $\BU_q^-$ (resp. $\ul\BU_q^-$) the quantum enveloping algebra 
associated to $X$ (resp. $\ul X$).  We follow the notation in [SZ1, 2].
In particular, for each $i \in I$, let $T_i : \BU_q \to \BU_q$
be the braid group action, and similarly consider $T_{\eta}: \ul\BU_q \to \ul\BU_q$ 
for $\eta \in \ul I$. 
\par
Let $W$ be the Weyl group associated to $X$. 
Assume that $X$ is of finite type, and $w_0$ is the longest element in $W$.
Let $\Bh = (i_1, \dots, i_N)$ be a sequence of $i \in I$ such that
$s_{i_1}\cdots s_{i_N}$ is a reduced expression of $w_0$. For a given sequence $\Bh$,   
the PBW-basis $\SX_{\Bh} = \{ L(\Bc, \Bh) \mid \Bc \in \BN^N\}$ for $\BU_q^-$ 
is defined as in [SZ1, 1.7].  
In the case where $X$ is of affine type, let $\Bh = ( \dots, i_{-1}, i_0, i_1, \dots)$ 
be a doubly infinite sequence of $i \in I$ as defined in [BN, 3.1]. 
For a given $\Bh$, and an integer $p$, the PBW-basis 
$\SX_{\Bh,p} = \{ L(\Bc, p) \mid \Bc \in \SC\}$ of $\BU_q^-$ is defined as in 
[SZ2, 1.5], where $\SC$ is a certain parameter set.

\par\bigskip
\section{ The algebra $\BV_q$}

\para{2.1.}
In this section, we assume that $\s$ is not admissible. 
First we consider the simplest case, namely, $X = A_2$ and $\ul X = C_1$, 
hence $I = \{ 1,2\}$ with $\s : 1 \lra 2$ and $\ul I = \{ \ul 1 \}$. 
$\BU_q^-$ has two generators $f_1, f_2$.   
Set 
\begin{equation*}
\tag{2.1.1}
\begin{aligned}
f_{12} &= T_1(f_2) = f_2f_1 - qf_1f_2,  \\ 
f'_{12} &= T_2(f_1) = f_1f_2 - qf_2f_1.
\end{aligned}
\end{equation*}
The action of $\s$ on $\BU_q^-$ is given by $\s: f_1 \lra f_2$.
Thus $\s(f_{12}) = f'_{12}$. 
For $\Bh = (1,2,1)$, $\Bh' = (2,1,2)$, (see 1.10) 
\begin{align*}
\SX_{\Bh} = \{ L(\Bc, \Bh) &= f_1^{(c_1)}f_{12}^{(c_2)}f_2^{(c_3)}  
                  \mid \Bc = (c_1, c_2, c_3) \in \BN^3 \}  \\
\SX_{\Bh'} = \{ L(\Bc, \Bh') &= f_2^{(c_1)}{f'}_{12}^{(c_2)}f_1^{(c_3)}  
                  \mid \Bc = (c_1, c_2, c_3) \in \BN^3 \}
\end{align*} 
give two kinds of PBW-bases of $\BU^-_q$.  
The following formulas are known.

\begin{align*}
\tag{2.1.2}
f_2^{(\ell)}f_1^{(m)}f_2^{(n)} &= \sum_{k = 0}^{\ell}q^{(\ell - k)(m - k)}
                      \begin{bmatrix}
                               \ell - k + n \\
                               \ell - k
                      \end{bmatrix}f_1^{(m -k)}f_{12}^{(k)}f_2^{(\ell - k + n)}, \\
\tag{2.1.3}
f_1^{(\ell)}f_2^{(m)}f_1^{(n)} &= \sum_{k = 0}^nq^{(m-k)(n - k)}
                       \begin{bmatrix}
                               n - k + \ell \\
                               n - k
                       \end{bmatrix}f_1^{(n-k+\ell)}f_{12}^{(k)}f_2^{(m - k)}.
\end{align*}

By (2.1.2) and (2.1.3), we see that, if $m \ge \ell + n$, 
$f_2^{(\ell)}f_1^{(m)}f_2^{(n)}$ is 
the canonical basis corresponding to the PBW-basis 
$f_1^{(m - \ell)}f_{12}^{(\ell)}f_2^{(n)}$, and 
$f_1^{(\ell)}f_2^{(m)}f_1^{(n)}$ is the canonical basis corresponding to 
the PBW-basis $f_1^{(\ell)}f_{12}^{(n)}f_2^{(m-n)}$.
Thus 
\begin{equation*}
\tag{2.1.4}
\bB = \{ f_2^{(\ell)}f_1^{(m)}f_2^{(n)} \mid m \ge \ell + n\} 
     \cup \{ f_1^{(\ell)}f_2^{(m)}f_1^{(n)} \mid m \ge \ell + n\}
\end{equation*}
gives the canonical basis of $\BU_q^-$, and the overlapping occurs only 
when $f_2^{(\ell)}f_1^{(m)}f_2^{(n)} = f_1^{(n)}f_2^{(m)}f_1^{(\ell)}$ 
with $m = \ell + n$. 
Note that $\s(f_2^{(\ell)}f_1^{(m)}f_2^{(n)}) = f_1^{(\ell)}f_2^{(m)}f_1^{(n)}$.
Let $\bB^{\s}$ be the set of $\s$-invariant elements in $\bB$. 
It follows from the above discussion that 

\par\medskip\noindent
(2.1.5) \ We have  
$\s(f_2^{(\ell)}f_1^{(m)}f_2^{(m - \ell)}) = f_1^{(\ell)}f_2^{(m)}f_1^{(m - \ell)} 
      = f_2^{(m -\ell)}f_1^{(m)}f_2^{(\ell)}$ for $\ell \le m$.  
In particular, 
\begin{equation*}
 \bB^{\s} = \{ f_2^{(a)}f_1^{(2a)}f_2^{(a)} \mid a \in \BN \}. 
\end{equation*}

More generally, the following result is also obtained from  the previous discussion.
\par\medskip\noindent
(2.1.6) \ Assume that $\nu = - m(\a_1 + \a_2) \in Q_-$.  Then 
$\{ f_1^{(\ell)}f_{12}^{(m - \ell)}f_2^{(\ell)} \mid 0 \le \ell \le m \}$ 
gives the PBW -basis, and 
$\{ f_2^{(\ell)}f_1^{(m)}f_2^{(m - \ell)} \mid 0 \le \ell \le m\}$  gives 
the canonical basis of $(\BU_q^-)_{\nu}$, under the correspondence
$f_1^{(\ell)}f_{12}^{(m - \ell)}f_2^{(\ell)} \lra f_2^{(\ell)}f_1^{(m)}f_2^{(m - \ell)}$. 

\par\medskip
Concerning the action of $\s$ on PBW-basis, we have the following..

\begin{lem}  
Assume that $X$ is of type $A_2$. Then 
$\s$-invariant PBW-bases do not exist (except the trivial one).  
The PBW-basis corresponding to the $\s$-invariant canonical basis 
$f_2^{(a)}f_1^{(2a)}f_2^{(a)}$ is given by $f_1^{(a)}f_{12}^{(a)}f_2^{(a)}$
for $\Bh = (1,2)$, and by $f_2^{(a)}f'^{(a)}_{12}f_1^{(a)}$ for 
$\Bh' = (2,1)$. 
We have, for $a \ge 1$,  
\begin{equation*}
\s(f_1^{(a)}f^{(a)}_{12}f_2^{(a)}) = f_2^{(a)}f'^{(a)}_{12}f_1^{(a)} 
        \ne f_1^{(a)}f^{(a)}_{12}f_2^{(a)}.
\end{equation*}
\end{lem}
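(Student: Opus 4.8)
My plan is to localize the whole question inside a single weight space $\nu = -m(\a_1+\a_2) \in Q_-$ and to exploit the explicit transition formulas (2.1.2)--(2.1.6). Throughout write $P_\ell = f_1^{(\ell)}f_{12}^{(m-\ell)}f_2^{(\ell)}$ and $B_\ell = f_2^{(\ell)}f_1^{(m)}f_2^{(m-\ell)}$ for $0 \le \ell \le m$, so that $\{P_\ell\}$ is the PBW-basis attached to $\Bh$ and $\{B_\ell\}$ is the canonical basis of $(\BU_q^-)_\nu$, as in (2.1.6). The two positive assertions are then immediate: the $\s$-invariant canonical vector $f_2^{(a)}f_1^{(2a)}f_2^{(a)}$ is $B_a$ in the weight $m=2a$, and (2.1.6) taken at $\ell=a$ pairs it with $P_a = f_1^{(a)}f_{12}^{(a)}f_2^{(a)}$; applying $\s$ and using $\s(f_1)=f_2$, $\s(f_2)=f_1$, $\s(f_{12})=f'_{12}$ produces the $\Bh'$-statement together with the displayed identity $\s(f_1^{(a)}f_{12}^{(a)}f_2^{(a)}) = f_2^{(a)}{f'}^{(a)}_{12}f_1^{(a)}$. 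So the real content is the non-equality, equivalently the non-existence of a nontrivial $\s$-fixed PBW vector.

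First I would record that any $\s$-fixed PBW monomial lies in such a $\nu$. A monomial $f_1^{(c_1)}f_{12}^{(c_2)}f_2^{(c_3)}$ has weight $-(c_1+c_2)\a_1-(c_2+c_3)\a_2$, and as $\s$ interchanges $\a_1,\a_2$ a $\s$-fixed weight forces $c_1=c_3$; hence the monomial is some $P_\ell$ with $m = c_1+c_2 \ge 1$. It therefore suffices to show $P_\ell \ne \s(P_\ell)$ for all $m \ge 1$ and $0 \le \ell \le m$.

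The mechanism is the different behaviour of $\s$ on the two bases. Because every vector of $(\BU_q^-)_\nu$ lies on the boundary $m = \ell+(m-\ell)$, the overlap relation (2.1.5) gives $\s(B_\ell)=B_{m-\ell}$; in particular $\s$ interchanges $B_0$ and $B_m$. On the PBW side, (2.1.2) with $n=m-\ell$ reads $B_\ell = P_{m-\ell}+\sum_{j>m-\ell}(\ast)P_j$ with coefficients in $q\BZ[q]$, a unitriangular system whose inverse is $P_\ell = B_{m-\ell}+\sum_{j<m-\ell}(\ast)B_j$. Writing $[B_j]x$ for the coefficient of $B_j$ in the canonical expansion of $x$, this gives at once $[B_m]P_\ell = 1$ if $\ell=0$ and $0$ otherwise, while inverting the $q$-binomial matrix — conveniently via the companion recursion $B_{m-\ell}=\sum_{i\ge\ell}q^{i(i-\ell)}\begin{bmatrix} i\\\ell\end{bmatrix}P_i$ coming from (2.1.3) — yields the closed form
\begin{equation*}
[B_0]P_\ell = (-1)^{m-\ell}\,q^{(m-\ell)(\ell+1)}\begin{bmatrix} m\\\ell\end{bmatrix},
\end{equation*}
which is nonzero for every $\ell$.

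To finish, note that since $\s$ swaps $B_0$ and $B_m$, a $\s$-fixed element $x$ must satisfy $[B_0]x = [B_m]x$. For $x=P_\ell$ the two sides disagree for all $m\ge1$: if $\ell\ge1$ then $[B_m]P_\ell=0$ while $[B_0]P_\ell\ne0$, and if $\ell=0$ then $[B_0]P_0=(-1)^mq^m\ne1=[B_m]P_0$. Hence no $P_\ell$ is $\s$-fixed, which proves both the non-existence statement and the inequality in the final display (the case $m=2a$, $\ell=a$). The one genuinely computational point is the evaluation of $[B_0]P_\ell$, that is, inverting the $q$-binomial transition matrix and checking this off-diagonal entry never vanishes; I expect the cleanest route to be the closed form above, proved by induction on $m-\ell$ from the recursion. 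For the last display alone one can avoid even this: there $\ell = m-\ell = a$, so the support of $P_a$ lies in $\{0,\dots,a\}$ and, since $\s(B_j)=B_{m-j}$, that of $\s(P_a)$ lies in $\{a,\dots,m\}$; equality would force $P_a=B_a$, contradicting that $B_a = P_a + \sum_{j>a}(\ast)P_j$ has a nontrivial PBW-expansion for $a\ge1$.
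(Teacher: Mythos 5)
Your proof is correct, but your main route differs from the paper's in both of its steps, so it is worth comparing. The paper never argues over all $P_\ell$: it first uses the $\s$-equivariance of the PBW--canonical correspondence (a $\s$-fixed PBW vector is congruent mod $q\SL_{\BZ}(\infty)$ to a $\s$-fixed canonical basis vector, which by (2.1.5) must be $f_2^{(a)}f_1^{(2a)}f_2^{(a)}$) to reduce at once to the diagonal element $P_a$ with $m=2a$; it then writes $f_2^{(a)}f_1^{(2a)}f_2^{(a)} = f_1^{(a)}f_{12}^{(a)}f_2^{(a)} + \sum_{0\le k<a}c_kB_k$ and derives a contradiction because $\s$-invariance would force $c_k=c_{2a-k}$ while the indices $2a-k>a$ do not occur in the sum --- which is precisely the support argument you offer as a shortcut in your final paragraph. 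You instead reduce by a weight computation alone (forcing $c_1=c_3$) and then eliminate \emph{every} $P_\ell$ by evaluating the inverse-transition entry $[B_0]P_\ell$. I checked your closed form: your companion recursion $B_{m-\ell}=\sum_{i\ge \ell}q^{i(i-\ell)}\begin{bmatrix} i\\ \ell\end{bmatrix}P_i$ does follow from (2.1.3) with $n=m-\ell$ after rewriting $B_{m-\ell}=f_1^{(\ell)}f_2^{(m)}f_1^{(m-\ell)}$ via (2.1.5), and the formula $[B_0]P_\ell=(-1)^{m-\ell}q^{(m-\ell)(\ell+1)}\begin{bmatrix} m\\ \ell\end{bmatrix}$ satisfies the resulting recursion by the identity $\begin{bmatrix} i\\ \ell\end{bmatrix}\begin{bmatrix} m\\ i\end{bmatrix}=\begin{bmatrix} m\\ \ell\end{bmatrix}\begin{bmatrix} m-\ell\\ i-\ell\end{bmatrix}$ together with the Gaussian vanishing $\sum_{t=0}^{a}(-1)^tq^{t(a-1)}\begin{bmatrix} a\\ t\end{bmatrix}=0$ for $a\ge 1$ ([L1, 1.3.4(a)], the same vanishing the paper invokes in Section 4); so your induction sketch, though the one step you left unexecuted, is sound and routine. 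The trade-off: the paper's equivariance reduction makes the diagonal case the only one to check, so its whole proof needs nothing beyond unitriangularity, whereas your computation buys a strictly stronger statement --- no $P_\ell$ with $m\ge 1$ is $\s$-fixed, not merely those corresponding to $\s$-fixed canonical vectors. Note finally that grafting the paper's reduction onto your last-paragraph support argument yields a computation-free proof that is, in substance, identical to the paper's.
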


\begin{proof}
The latter assertion is clear from the previous discussion. 
We show that $\s$-invariant PBW-basis does not exist. 
If a PBW-basis $x$ is $\s$-invariant, then the corresponding canonical 
basis is $\s$-invariant, hence has the form $f_2^{(a)}f_1^{(2a)}f_2^{(a)}$ 
for some $a \in \BN$. It follows that $x = f_1^{(a)}f_{12}^{(a)}f_2^{(a)}$, 
and $y = \s(x) = f_2^{(a)}f'^{(a)}_{12}f_1^{(a)}$.  
It is enough to show that $x \ne y$ if $a \ge 1$. 
By (2.1.2), one can write as 
\begin{equation*}
\tag{2.2.1}
f_2^{(a)}f_1^{(2a)}f_2^{(a)} = f_1^{(a)}f_{12}^{(a)}f_2^{(a)} 
                                 + \sum_{0 \le k < a}c_kf_2^{(k)}f_1^{(2a)}f_2^{(2a - k)}
\end{equation*}
with $c_k \in \BQ(q)$, where $c_k \ne 0$ for some $k$.
If $f_1^{(a)}f_{12}^{(a)}f_2^{(a)}$ is $\s$-invariant, 
then the sum in the right hand side of (2.2.1) is $\s$-invariant. 
It follows by (2.1.5) that $c_k = c_{2a - k}$ for any $k$.
But since $0 \le k < a$, we have $2a - k > a$, and $c_{2a - k}$ does not appear 
in this sum.  This is absurd, and $f_1^{(a)}f_{12}^{(a)}f_2^{(a)}$ 
is not $\s$-invariant.  
The lemma is proved. 
\end{proof}

\para{2.3.} 
We define $\BV_q = {}_{\BA'}\BU_q^{-,\s}/J$ as in 1.2, and
let $\pi : {}_{\BA'}\BU_q^{-,\s} \to \BV_q$ be the natural projection.
Let $\bB$ be the canonical basis of $\BU_q^-$ (of type $A_2$).  Then 
$\s$ permutes $\bB$, and $\bB^{\s}$ is given as in (2.1.5).  
For each $a \ge 0$, set
\begin{equation*}
\tag{2.3.1}
g^{(a)}_{\ul 1} = \pi(f_2^{(a)}f_1^{(2a)}f_2^{(a)}) 
                = \pi(f_1^{(a)}f_2^{(2a)}f_1^{(a)})\in \BV_q
\end{equation*}
Then $\pi(\bB^{\s}) = \{ g^{(a)}_{\ul 1} \mid a \in \BN \}$ gives 
an $\BA'$-basis of $\BV_q$.  

We consider $\ul\BU_q^-$ associated to the Cartan datum $\ul X$ of type
$C_1$. The canonical basis of $\ul\BU_q^-$ is given by 
$\{ \ul f^{(a)}_{\ul 1} \mid a \in \BN \}$, where 
$\ul f^{(a)}_{\ul 1} = ([a]^!_4)\iv \ul f_{\ul 1}^a$. 
(Note that $d_{\ul 1} = (\a_{\ul 1}, \a_{\ul 1})_1/2 = 4$ by 
(1.6.1).)
${}_{\BA}\ul\BU_q^-$ is an $\BA$-subalgebra of $\ul\BU_q^-$ generated by 
$\ul f^{(a)}_{\ul 1}$ ($a \in \BN$).  $\BA'$-algebra ${}_{\BA'}\ul\BU_q^-$ is defined 
as before. We show the following.

\begin{prop}  
Assume that $X$ is of type $A_2$.   Then we have 
\begin{equation*}
\tag{2.4.1}
[a]^!_4g_{\ul 1}^{(a)} = g_{\ul 1}^a.
\end{equation*} 
The correspondence $\ul f_{\ul 1}^{(a)} \mapsto g_{\ul 1}^{(a)}$ 
gives an isomorphism 
$\Phi: {}_{\BA'}\ul\BU_q^- \isom \BV_q$ of $\BA'$-algebras.  
\end{prop}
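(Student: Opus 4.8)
The plan is to prove the two assertions in sequence, establishing the power formula (2.4.1) first and then using it to construct and verify the isomorphism $\Phi$. The core computational task is to understand the product $g_{\ul 1}^a$ in $\BV_q$, where $g_{\ul 1} = g_{\ul 1}^{(1)} = \pi(f_2 f_1^{(2)} f_2) = \pi(O(\cdot))$-type element. First I would compute $g_{\ul 1}^a = \pi\bigl((f_2 f_1^{(2)} f_2)^a\bigr)$ by expanding the product in ${}_{\BA'}\BU_q^{-,\s}$ and then projecting to $\BV_q$. The key simplification is that modulo $J$, any orbit sum $O(x)$ with $\s(x) \ne x$ vanishes, so only the $\s$-fixed contributions survive. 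Concretely, I expect the commutation relations (2.1.2) and (2.1.3) together with the explicit description of $\bB^{\s}$ in (2.1.5) to reduce $\pi\bigl((f_2 f_1^{(2)} f_2)^a\bigr)$ to a scalar multiple of $g_{\ul 1}^{(a)} = \pi(f_2^{(a)} f_1^{(2a)} f_2^{(a)})$.

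Next I would determine that scalar. The heuristic is that, in the canonical/PBW picture of (2.1.6), multiplying $a$ copies of the degree-one $\s$-fixed element produces the degree-$a$ $\s$-fixed element, but with a $q$-multinomial-type coefficient coming from the divided-power normalizations. Since $d_{\ul 1} = 4$ by the computation in 2.3 (because $(\a_{\ul 1}, \a_{\ul 1})_1 = 2\d_{\ul 1}|\eta| = 2 \cdot 2 \cdot 2 = 8$), the relevant quantum integers are taken at $v = q^4$, i.e.\ the bracket $[a]^!_4$. The claim (2.4.1) asserts precisely that this accumulated coefficient is $[a]^!_4$, so $g_{\ul 1}^a = [a]^!_4 \, g_{\ul 1}^{(a)}$. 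I would verify this by an induction on $a$: assuming $g_{\ul 1}^{a-1} = [a-1]^!_4 \, g_{\ul 1}^{(a-1)}$, I compute $g_{\ul 1}^{(a-1)} \cdot g_{\ul 1}$ in $\BV_q$, discarding the non-$\s$-invariant terms via $\pi$, and check that $\pi(f_2^{(a-1)} f_1^{(2a-2)} f_2^{(a-1)} \cdot f_2 f_1^{(2)} f_2) = [a]_4 \, g_{\ul 1}^{(a)}$.

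For the second assertion, once (2.4.1) is established the map $\Phi$ is forced: since $\ul f_{\ul 1}^{(a)} = ([a]^!_4)\iv \ul f_{\ul 1}^a$ and the target satisfies $g_{\ul 1}^{(a)} = ([a]^!_4)\iv g_{\ul 1}^a$, the assignment $\ul f_{\ul 1}^{(a)} \mapsto g_{\ul 1}^{(a)}$ is the unique $\BA'$-linear, multiplicative extension of $\ul f_{\ul 1} \mapsto g_{\ul 1}$. To see it is a well-defined $\BA'$-algebra homomorphism, I would note that ${}_{\BA'}\ul\BU_q^-$ is the free $\BA'$-algebra on the single generator $\ul f_{\ul 1}$ subject only to the divided-power relations $\ul f_{\ul 1}^{(a)} \ul f_{\ul 1}^{(b)} = \binom{a+b}{a}_4 \ul f_{\ul 1}^{(a+b)}$ (type $C_1 = A_1$ is rank one, so there are no Serre relations), and these relations are matched on the $g$-side precisely by (2.4.1). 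Bijectivity is immediate from the bases: $\{\ul f_{\ul 1}^{(a)}\}$ is an $\BA'$-basis of ${}_{\BA'}\ul\BU_q^-$, and $\pi(\bB^{\s}) = \{g_{\ul 1}^{(a)}\}$ is an $\BA'$-basis of $\BV_q$ by 2.3, so $\Phi$ sends a basis bijectively to a basis.

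I expect the main obstacle to be the explicit coefficient bookkeeping in the inductive step for (2.4.1): one must carry the $q$-binomial factors from (2.1.2)/(2.1.3) through the projection $\pi$ and confirm that, after the non-$\s$-invariant orbit sums are killed, exactly the factor $[a]_4$ emerges, rather than a bracket at a different power of $q$. This is where the value $d_{\ul 1} = 4$ must be used with care, since a miscalibration of the quantum integer's base would break the clean statement (2.4.1). Everything downstream is then formal.
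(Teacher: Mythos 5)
Your skeleton coincides with the paper's proof: reduce (2.4.1) to an inductive step for the product $g_{\ul 1}^{(a-1)}g_{\ul 1}$, identify the surviving $\s$-fixed term after projecting by $\pi$, and then obtain the isomorphism formally from the fact that both sides are rank-one algebras with divided-power bases ($\{\ul f_{\ul 1}^{(a)}\}$ and $\pi(\bB^{\s}) = \{ g_{\ul 1}^{(a)}\}$). But there is a genuine gap exactly at the place you flagged as the main obstacle: the coefficient that the computation actually produces is \emph{not} $[a]_4$. The paper's inductive identity (2.4.3) reads
\[
(f_1^{(a-1)}f_2^{(2a-2)}f_1^{(a-1)})(f_1f_2^{(2)}f_1) \equiv [a]^4\, f_1^{(a)}f_2^{(2a)}f_1^{(a)} \mod J,
\]
where $[a]^4$ is the \emph{fourth power} of the ordinary quantum integer $[a] = [a]_q$, not the quantum integer at base $q^4$; already for $a = 2$ one has $[2]^4 = q^4 + 4q^2 + 6 + 4q^{-2} + q^{-4} \ne q^4 + q^{-4} = [2]_4$ over $\BZ[q,q\iv]$. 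The two agree in $\BV_q$ only because $\BA' = \BF[q,q\iv]$ with $\ch \BF = 2$, where $(x-y)^4 \equiv x^4 - y^4$ gives $[a]^4 = [a]_4$ and $([a]^!)^4 = [a]^!_4$. The paper's proof opens with precisely this reduction: since $\ch \BF = 2$, it suffices to prove $(f_1f_2^{(2)}f_1)^a \equiv ([a]^!)^4 f_1^{(a)}f_2^{(2a)}f_1^{(a)} \mod J$, which is (2.4.2). Your proposal never invokes the characteristic, and your verification plan --- ``confirm that exactly the factor $[a]_4$ emerges'' --- would terminate in an apparent contradiction when carried out, since over $\BQ(q)$ that factor does not emerge.

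A secondary caution concerns the mechanics of ``discarding the non-$\s$-invariant terms via $\pi$'': since no $\s$-stable PBW-basis exists (Lemma 2.2), the image under $\pi$ cannot be read off from a PBW expansion term by term. The paper expands the product $Z = (f_1^{(a-1)}f_2^{(2a-2)}f_1^{(a-1)})(f_1f_2^{(2)}f_1)$ \emph{twice} --- once in the PBW-basis $f_1^{(\cdot)}f_{12}^{(\cdot)}f_2^{(\cdot)}$ (coefficients $A_k, B_k, C_k$) and once in the $\s$-permuted canonical basis $\{ f_1^{(i)}f_2^{(2a)}f_1^{(2a-i)}\}$ of (2.1.6) (coefficients $m_i$) --- then uses the $\s$-invariance of $Z$ to force $m_i = m_{2a-i}$, and compares the two expansions through the triangular formula (2.1.3) to pin down $m_0 = \cdots = m_{a-2} = 0$ and $m_a = [a]^4$; only the single $\s$-fixed canonical term survives modulo $J$. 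Your citation of (2.1.5)/(2.1.6) shows you intend to work in the canonical basis, so this is a matter of detail rather than principle; and your argument for the second assertion (the divided-power relations determine the rank-one algebra, and $\Phi$ carries a basis bijectively to a basis) is correct and is essentially what the paper leaves implicit.
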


\begin{proof}
Since $\ch \BF = 2$, in order to prove the proposition, it is enough to
show that 
\begin{equation*}
\tag{2.4.2}
(f_1f_2^{(2)}f_1)^a \equiv ([a]^!)^4f_1^{(a)}f_2^{(2a)}f_1^{(a)}  \mod J.
\end{equation*}

For $a \ge 1$, we consider the following statement. 

\begin{equation*}
\tag{2.4.3}
[a]^4f_1^{(a)}f_2^{(2a)}f_1^{(a)} 
      \equiv (f_1^{(a-1)}f_2^{(2a-2)}f_1^{(a-1)})(f_1f_2^{(2)}f_1)  \mod J.
\end{equation*}
Note that if (2.4.3) holds, then (2.4.2) follows by induction. 
We prove (2,4,3). 
Let $Z$ be the right hand side of (2.4.3). 
By using the formula (2.1.3), we can express $Z$ as a linear 
combination of PBW-bases $f_1^{(a)}f_{12}^{(b)}f_2^{(c)}$, 
\begin{align*}
\sum_{k = 0}^{a-1}
         \bigl(A_kf_1^{(2a-1-k)}f_{12}^{(k+1)}f_2^{(2a-1-k)}  
         + B_kf_1^{(2a-2-k)}f_{12}^{(k+2)}f_2^{(2a-2 -k)}
         + C_kf_1^{(2a-k)}f_{12}^{(k)}f_2^{(2a - k)}\bigr), 
\end{align*}
where 
\begin{align*}
A_k &= q^{(2a-2-k)(a-1-k)-k}\begin{bmatrix}
                             2a - 2 - k \\
                              a - 1 - k
                             \end{bmatrix}[2a-1-k]^2[k + 1]  \\
      &\qquad + q^{(2a-2-k)(a-1-k) + (2a- 1 - 2k)}
                            \begin{bmatrix}
                             2a - 2 - k \\
                             a  - 1  - k
                              \end{bmatrix}[2a-1-k]^2[k+1][2a - 2 - k], \\         
B_k &= q^{(2a-2-k)(a-1-k)- (2a-3-k)}
                            \begin{bmatrix}
                              2a-2-k  \\
                               a - 1 - k
                            \end{bmatrix}[2a-2-k][k+1][k+2] \\
       &\qquad + q^{(2a-2-k)(a-1-k) + 2}
                             \begin{bmatrix}
                               2a - 2 - k \\
                                a -1 -k
                             \end{bmatrix} [2]\iv[2a-3-k][2a-2-k][k+1][k+2],  \\
C_k &= q^{(2a-2-k)(a-1 k) + (4a-2 - 4k)}
                             \begin{bmatrix}
                               2a - 2 - k   \\
                                a - 1 - k
                             \end{bmatrix}[2]\iv [2a - 1 - k]^2[2a - 2 - k]^2. 
\end{align*}
From this formula, we see that the coefficient of $f_1^{(2a-k)}f_{12}^{(k)}f_2^{(2a-k)}$
in $Z$ is equal to 
\begin{equation*}
\tag{2.4.4}
\begin{cases}
     A_{a-1} + B_{a-2}  &\quad\text{ if $k = a$, } \\
     B_{a-1}            &\quad\text{ if $k = a+1$, } \\
     0                  &\quad\text{ if $k \ge a+2$.}
\end{cases}
\end{equation*}

On the other hand, by using (2.1.6), $Z$ can be expressed as 
\begin{equation*}
\tag{2.4.5}
Z = \sum_{0 \le i \le 2a} m_i f_1^{(i)}f_2^{(2a)}f_1^{(2a-i)}.
\end{equation*}
Note that since $Z$ is $\s$-invariant, we see, by (2.1.5), 
 that $m_i = m_{2a-i}$ for each $i$. 
By using (2.1.3), each $f_1^{(i)}f_2^{(2a)}f_1^{(2a-i)}$ is written 
by the PBW -basis. Then the coefficient of $f_1^{(2a-k)}f_{12}^{(k)}f_2^{(2a-k)}$
in $Z$ is given as 
\begin{equation*}
\tag{2.4.6}
\sum_{i = 0}^{2a-k}m_iq^{(2a-k)(2a- k - i)}\begin{bmatrix}
                                         2a - k \\
                                         2a - k - i
                                      \end{bmatrix}.
\end{equation*}
By comparing (2.4.4) with (2.4.6), we see that
$m_0 = \cdots = m_{a-2} = 0$, and 
\begin{equation*}
m_{a-1} = B_{a-1}, \qquad m_{a-1}q^a[a] + m_a = A_{a-1} + B_{a-2},  
\end{equation*}
where
\begin{align*}
B_{a-1} &= q^{-a + 2}[a-1][a][a+1] + q^2[2]\iv [a-2][a-1][a][a+1], \\
A_{a-1} + B_{a-2} &= (q^{-a+1}[a]^3 + q[a]^3[a-1]) + 
                       (q[a]^3[a-1] + q^{a+2}[2]\iv [a-1]^2[a]^3).
\end{align*}
This implies that $m_a = [a]^4$. 
$Z$ is $\s$-invariant, and in the expansion of $Z$ in (2.4.5)
the only $\s$-invariant term  is $f_1^{(a)}f_2^{(2a)}f_1^{(a)}$, 
whose coefficient is equal to $m_a = [a]^4$.
It follows that $Z \equiv [a]^4f_1^{(a)}f_2^{(2a)}f_1^{(a)} \mod J$, and   
(2.4.3) holds. The lemma is proved. 
\end{proof}

\para{2.5.}
We now consider the general case, namely, $X$ is irreducible of finite or affine type, 
and $\s$ is non-admissible. We exclude the case ($A_n5$).
$\BV_q$ is defined as in 1.2, and we can consider 
${}_{\BA'}\ul\BU_q$ corresponding to $\ul X$. 
But as observed in the case where 
$X = A_2$ (Proposition 2.4), the definition of $\Phi : {}_{\BA'}\ul\BU_q^- \to \BV_q$
for the admissible case in 1.3 is not applicable for the non-admissible case. 
We modify the definition of $\Phi$ as follows.  
Note that if $X$ is irreducible and $\d_{\eta} = 2$, then 
$\ve = 2$ or 4, and $\eta = \{ i, i'\}$ with $(\a_i, \a_{i'}) = -1$
(see 1.7). 
\par
For $\eta \in \ul I$, and $a \in \BN$,
we define $\wt f_{\eta} \in {}_{\BA}\BU_q^-$ by 
\begin{equation*}
\tag{2.5.1}
\wt f^{(a)}_{\eta} = \begin{cases}
               f^{(a)}_if_{i'}^{(2a)}f_i^{(a)}
                          &\quad\text{ if $\d_{\eta} = 2$ and 
                    $\eta = \{ i,i'\}$,} \\
               \prod_{i \in \eta}f_i^{(a)}
                          &\quad\text{ if $\d_{\eta}  = 1$,}
                 \end{cases}
\end{equation*} 
and set
\begin{equation*}
\tag{2.5.2}
g_{\eta}^{(a)} = \pi(\wt f_{\eta}^{(a)}).
\end{equation*}
Note that $f_i^{(a)}f_{i'}^{(2a)}f_i^{(a)} = f_{i'}^{(a)}f_i^{(2a)}f_{i'}^{(a)}$ 
if $\d_{\eta} = 2$, and $\wt f^{(a)}_{\eta}$ does not depend on the order
of the product if $\d_{\eta} = 1$.  

We prove the following.

\begin{thm}  
Assume that $X$ is irreducible of finite or affine type, and 
$\s$ is not admissible. 
We exclude the cases ($A_n2'$), ($A_n3'$), ($A_n4'$), ($A_n5$) in 1.8.
Then the correspondence $\ul f^{(a)}_{\eta} \mapsto g_{\eta}^{(a)}$ 
gives rise to an isomorphism $\Phi : {}_{\BA'}\ul\BU_q^- \isom  \BV_q$. 
\end{thm}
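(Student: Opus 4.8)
The plan is to verify that $\Phi$ is a well-defined $\BA'$-algebra homomorphism, that it is surjective, and that it is injective. For the homomorphism property, I would reduce to rank-two computations: ${}_{\BA'}\ul\BU_q^-$ is generated by the divided powers $\ul f_{\eta}^{(a)}$ subject to the integral quantum Serre relations of $\ul X$, and each such relation involves only two vertices $\eta \ne \eta'$ of $\ul I$. It therefore suffices to check, for each single $\eta$, the divided-power identity $[a]^!_{d_{\eta}}g_{\eta}^{(a)} = g_{\eta}^a$ in $\BV_q$, and, for each pair $\eta, \eta'$, the image under $\Phi$ of the corresponding Serre relation among $g_{\eta}, g_{\eta'}$. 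Every such relation is supported on the subalgebra of $\BU_q^-$ generated by $\{ f_i \mid i \in \eta \cup \eta' \}$, and $\s$ stabilizes this subalgebra, so $\pi$ and $J$ restrict to it. When $X$ is of finite type the associated sub-Cartan datum is automatically of finite type; when $X$ is affine, the excluded cases ($A_n2'$), ($A_n3'$), ($A_n4'$), ($A_n5$) are precisely those in which $\eta \cup \eta'$ could exhaust $I$, so after the exclusions $\eta \cup \eta'$ is a proper, hence finite-type, subdiagram. The building blocks are Proposition 2.4 (the case $X = A_2$, $\ul X = C_1$, giving (2.4.1) when $\d_{\eta} = 2$) and the admissible computation behind Theorem 1.4 (when $\d_{\eta} = 1$).

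The main obstacle is the mixed rank-two Serre relations in which at least one of $\eta, \eta'$ has $\d = 2$. By Lemma 2.2 the ordinary PBW-bases are not $\s$-stable in the presence of a $\d = 2$ orbit, so these relations cannot be checked directly in $\SX_{\Bh}$. Instead I would expand the products of the $\wt f_{\eta}^{(a)}$ and their interleavings in the modified PBW-bases, which glue the ordinary PBW-basis to the $A_2$-canonical basis through the substitution (2.5.1), and then discard modulo $J$ the orbit sums of the non-$\s$-fixed monomials. Showing that the surviving terms assemble into exactly the image under $\Phi$ of the Serre relation of $\ul X$ is the computational heart of the argument, and is where I expect the bulk of the work to lie.

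For surjectivity I would use that the images $\pi(\bB^{\s})$ of the $\s$-fixed canonical basis elements form an $\BA'$-basis of $\BV_q$, the non-fixed elements being killed by their orbit sums in $J$. It then suffices to show that every element of $\bB^{\s}$ lies, modulo $J$, in the subalgebra generated by the $g_{\eta}^{(a)}$. In the finite-type case $\s$ permutes the modified PBW-bases, and a unitriangularity argument with respect to a suitable order on the parameter set expresses each $\s$-fixed canonical basis element as a monomial in the $g_{\eta}^{(a)}$ plus terms already in the image; this is Theorem 3.7. In the affine case, where it is unclear that $\s$ preserves the modified PBW-bases, I would instead argue through Kashiwara operators adapted to the $\s$-setup and defined by means of the modified PBW-bases, as in Proposition 6.21.

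Finally, injectivity should follow by a general argument as in [MSZ], formal once the homomorphism and surjectivity are established: Lusztig's nondegenerate bilinear form on $\BU_q^-$ restricts to the $\s$-fixed part and descends to $\BV_q$, $\ul\BU_q^-$ carries its own such form, and one checks that $\Phi$ intertwines them up to a scalar, so that nondegeneracy on the source forces $\Phi$ to be injective. The exclusions are then natural: in ($A_n2'$), ($A_n3'$), ($A_n4'$) the rank-two reduction above degenerates because $\eta \cup \eta'$ can exhaust $I$, while ($A_n5$) must be discarded since by Remark 1.9 there is no $\s$-invariant canonical basis and $\BV_q = 0$, so no such isomorphism can exist.
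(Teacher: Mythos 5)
Your proposal follows essentially the same route as the paper: the homomorphism property is reduced to rank-two Serre relations, which after the exclusions land in the finite-type case $(A_4, C_2)$ and are checked by explicit computations modulo $J$ using modified PBW-bases together with Proposition 2.4; injectivity comes from the bilinear-form argument of [MSZ] (note the paper proves the exact isometry $(\Phi(x), \Phi(y)) = (x,y)$, and injectivity is established \emph{before} and independently of surjectivity, not after it); and surjectivity splits exactly as you describe, via Theorem 3.7 in the finite case and via Kashiwara operators built from modified PBW-bases in Proposition 6.21 in the affine case. This matches the paper's proof in all essentials.
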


\remark{2.7.}
The theorem does not hold for the case ($A_n5$) since 
$\BV_q = \{ 0\}$ by Remark 1.9.
It is likely that the theorem holds for the cases ($A_n2'$) $\sim$ ($A_n4'$).
Actually, in those three cases, the verification of the fact that 
$\Phi : {}_{\BA'}\ul\BU_q^- \to \BV_q$ is a homomorphism (Proposition 2.8) 
becomes more complicated, and it is not yet done. If we can find 
a combinatorial discussion as in [MSZ, 5], this might be possible.    
\par\medskip
The first step towards the proof of the theorem is the following.

\begin{prop}  
Under the assumption of Theorem 2.6, 
the assignment $\ul f^{(a)}_{\eta} \mapsto g_{\eta}^{(a)}$ 
gives rise to a homomorphism $\Phi : {}_{\BA'}\ul\BU_q^- \to \BV_q$. 
\end{prop}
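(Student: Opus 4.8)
The plan is to verify that the elements $g_\eta^{(a)} = \pi(\wt f_\eta^{(a)})$ satisfy the defining relations of the integral form ${}_{\BA'}\ul\BU_q^-$, so that the assignment $\ul f_\eta^{(a)} \mapsto g_\eta^{(a)}$ extends to an $\BA'$-algebra homomorphism. Since $\ul\BU_q^-$ is presented by its generators $\ul f_\eta$ subject to the quantum Serre relations, and its integral form is generated by the divided powers $\ul f_\eta^{(a)}$, it suffices to check two families of identities in $\BV_q$: the single-orbit multiplication rule
\begin{equation*}
g_\eta^{(a)} g_\eta^{(b)} = {\begin{bmatrix} a + b \\ a \end{bmatrix}}_{d_\eta} g_\eta^{(a+b)} \qquad (\eta \in \ul I),
\end{equation*}
and the quantum Serre relations in divided-power form
\begin{equation*}
\sum_{r + s = 1 - \ul a_{\eta\eta'}} (-1)^r g_\eta^{(r)} g_{\eta'} g_\eta^{(s)} = 0 \qquad (\eta \ne \eta' \text{ in } \ul I).
\end{equation*}
Because each Serre relation involves only the pair $\{\eta,\eta'\}$, every identity above is a statement about the subalgebra generated by $\{ f_i \mid i \in \eta \cup \eta' \}$, which by 1.7 is the negative part of a finite-type quantum group of small rank; I would carry out each computation there and then apply $\pi$.

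For the single-orbit relations I would split according to $\d_\eta$. When $\d_\eta = 1$ the orbit $\eta$ consists of pairwise non-adjacent vertices, so $\wt f_\eta^{(a)} = \prod_{i \in \eta} f_i^{(a)}$ is a product of commuting divided powers and the multiplication rule follows exactly as in the admissible case treated in [MSZ]. When $\d_\eta = 2$, by 1.7 we have $\eta = \{ i, i' \}$ with $(\a_i, \a_{i'}) = -1$, so $\{ f_i, f_{i'} \}$ generates an $A_2$ subalgebra and $\wt f_\eta^{(a)} = f_i^{(a)} f_{i'}^{(2a)} f_i^{(a)}$; the required relation $[a]^!_4 g_\eta^{(a)} = g_\eta^a$, and hence the binomial form, is precisely Proposition 2.4 applied inside this subalgebra. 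The only point needing care is that $\pi$ restricted to the $A_2$ subalgebra agrees with the projection used in Proposition 2.4, i.e.\ that the relevant orbit sums lie in $J$; this follows from the description of $J$ in 1.2.

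For the two-orbit Serre relations I would fix an adjacent pair $\eta, \eta'$, compute $\ul a_{\eta\eta'}$ from (1.6.1), and expand $\wt f_\eta^{(r)} \wt f_{\eta'} \wt f_\eta^{(s)}$ using the straightening formulas (such as (2.1.2)--(2.1.3)) inside the finite-type subalgebra attached to $\eta \cup \eta'$. Projecting modulo $J$, the non-$\s$-fixed contributions are killed, and the surviving $\s$-fixed terms should organize so that the alternating sum vanishes. To keep track of which terms survive under $\pi$, I would rewrite the products in terms of the modified PBW-bases (usual PBW-bases mixed with the $A_2$-canonical bases, as announced in the introduction), since these are adapted to the $\d_\eta = 2$ generators and make the $\s$-action transparent.

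The hard part will be the Serre relations in which one of the orbits has $\d_\eta = 2$: there $g_\eta$ is represented by the three-factor element $f_i f_{i'}^{(2)} f_i$, and the relevant Cartan integer can be $-2$ (as on the short-long bond of $C_n$), so the Serre sum has three terms and the straightening takes place in a rank-three ($A_3$) subalgebra. These computations are finite but delicate, and the bookkeeping of $\s$-fixed versus orbit-sum terms under $\pi$ is where the real work lies; this is exactly why the degenerate small-rank cases ($A_n2'$), ($A_n3'$), ($A_n4'$), where the subdiagram attached to $\eta \cup \eta'$ collapses and the local analysis loses the structure used above, are set aside in the statement.
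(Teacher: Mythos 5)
Your overall reduction coincides with the paper's (Section 4): one verifies the divided-power relation $[a]^!\,g_\eta^{(a)} = g_\eta^a$ --- via [SZ1, 3.1] when $\d_\eta = 1$ and via Proposition 2.4 inside the $A_2$ subalgebra when $\d_\eta = 2$ --- together with the Serre relations (4.1.1), which reduce to a single rank-two pair of $\ul X$. But your description of that remaining case contains two concrete errors. First, the subalgebra attached to $\eta \cup \eta'$ is not of type $A_3$: in every retained case the orbit adjacent to the non-admissible orbit $\eta = \{i,i'\}$ (with $\d_\eta = 2$) is a two-element orbit with $\d_{\eta'} = 1$, so $\eta \cup \eta'$ spans an $A_4$ subdiagram, and the paper's computation in 4.2--4.4 is carried out for $(X, \ul X) = (A_4, C_2)$ with a length-ten reduced word for $w_0 \in S_5$ and the full PBW apparatus of $A_4$. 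Second, the Cartan integer $-2$ produces the \emph{four}-term Serre relation $\sum_{r+s=3}(-1)^r g_{\ul 1}^{(r)} g_{\ul 2}\, g_{\ul 1}^{(s)} = 0$, not a three-term one; both the three-term relation (4.1.3) (long root $g_{\ul 2}$ repeated) and the four-term relation (4.1.4) (short root $g_{\ul 1}$ repeated) must be checked.

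The more serious gap is that the vanishing of the alternating sum is asserted (``should organize so that \dots'') rather than proved, and this is precisely where all of the paper's work lies. After straightening, not every unwanted term is literally an orbit sum $x + \s(x)$: several lie in $J$ only after applying a Serre relation or its braid twist (e.g.\ $f_{1'2'}f_{12}f_{1'2'} = f_{12}f_{1'2'}^{(2)} + f_{1'2'}^{(2)}f_{12}$, or the identities (4.4.2)--(4.4.3)); and the genuinely $\s$-invariant survivors do \emph{not} cancel pairwise --- their coefficients assemble into the Gaussian alternating sums $F_a(q^2)$, $F_a(q^4)$, where
\begin{equation*}
F_a(q) = \sum_{t=0}^{a}(-1)^t q^{t(a-t)}\begin{bmatrix} a \\ t \end{bmatrix},
\end{equation*}
and one concludes because $F_a$ is identically zero by [L1, 1.3.4(a)]. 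Without identifying this cancellation mechanism (or actually performing the $A_4$ computation) your sketch does not establish (4.1.1). Your remaining points are sound and consistent with the paper: computing inside the local subalgebra is legitimate because its $\s$-orbit sums lie in the global ideal $J$, and the exclusion of ($A_n2'$)--($A_n4'$) is indeed because there $\eta \cup \eta'$ exhausts the whole affine diagram, so no finite-type local reduction is available (cf.\ Remark 2.7).
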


\para{2.9.}
Proposition 2.8 will be proved in Section 4.  
Here, assuming Proposition 2.8, we continue the discussion. 
The algebra homomorphism $r : \BU_q^- \to \BU_q^-\otimes \BU_q^-$ 
is defined by $r(f_i) = f_i\otimes 1 + 1 \otimes f_i$, with respect 
to the twisted algebra structure of $\BU_q^-\otimes \BU_q^-$.  The 
symmetric bilinear form  $(\ ,\ )$ on $\BU_q^-$ is defined by 
using the property of $r$.  The bilinear form $(\ ,\ )$ is 
non-degenerate.  A similar result also hold for $\ul\BU_q^-$ (see [MSZ, 1.3]). 
Let $\BF(q)$ be the field of rational functions of $q$ with coefficients in $\BF$, 
which contains $\BA' = \BF[q,q\iv]$. 
Set ${}_{\BF(q)}\BV_q = \BF(q)\otimes_{\BA'}\BV_q$. 
Then by [MSZ, 3.6], the bilinear form $(\ ,\ )$ on $\BU_q^-$ induces 
a non-degenerate symmetric bilinear form $(\ ,\ )$ on ${}_{\BF(q)}\BV_q$.
(Note that the discussion there holds even if $\s$ is not admissible). 
On the other hand, the bilinear form $(\ ,\ )$ on $\ul\BU_q^-$ 
induces a symmetric bilinear form  on ${}_{\BF(q)}\ul\BU_q^-$. This 
bilinear form is non-degenerate by [MSZ, Prop. 3.7]. 
We have the following proposition.

\begin{prop}  
\begin{enumerate}
\item \ For any $x, y \in {}_{\BA'}\ul \BU_q^-$, we have 
$(\Phi(x), \Phi(y)) = (x,y)$. 
\item The map $\Phi: {}_{\BA'}\ul\BU_q^- \to \BV_q$ is injective. 
\end{enumerate}
\end{prop}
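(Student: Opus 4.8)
The plan is to prove the two statements together, with part (i) doing the real work and part (ii) following formally. For part (i), I would first reduce the claim to generators. Since $\Phi$ is an $\BA'$-algebra homomorphism (Proposition 2.8), and since both bilinear forms are determined by the coproduct-type map $r$ together with their values on products of generators, it suffices to verify $(\Phi(x),\Phi(y)) = (x,y)$ when $x,y$ range over the generating elements $\ul f^{(a)}_\eta$ and, more generally, over monomials in these. The key structural fact I would exploit is the compatibility of $\Phi$ with the restriction-type comultiplications: on the $\ul\BU_q^-$ side one has $\ul r$, and on $\BV_q$ the form is induced from the form on $\BU_q^-$ via the projection $\pi$. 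So the heart of the matter is to show that $\Phi$ intertwines the coproducts up to the relevant twist, i.e. that the diagram relating $\ul r$ on $\ul\BU_q^-$ and the induced comultiplication on ${}_{\BF(q)}\BV_q$ commutes after applying $\Phi\otimes\Phi$.

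Concretely, I would proceed as follows. First, record the formula for $(\ul f_\eta, \ul f_\eta)$ on the downstairs algebra, which is governed by $d_\eta = (\a_\eta,\a_\eta)_1/2$ and hence by the integers $\d_\eta$ and $|\eta|$ from (1.6.1). Second, compute $(\wt f^{(a)}_\eta, \wt f^{(a)}_{\eta'})$ upstairs using the definition (2.5.1) and the standard formula for the bilinear form on PBW-type monomials; the case $\d_\eta = 2$, where $\wt f^{(a)}_\eta = f_i^{(a)}f_{i'}^{(2a)}f_i^{(a)}$, is the nontrivial one and its self-pairing should reproduce the factor $[a]^!_4$-type normalization already seen in Proposition 2.4. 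Third, check that passing to $\BV_q = {}_{\BA'}\BU_q^{-,\s}/J$ does not change these values: the point is that $J$ is the radical-like piece spanned by nontrivial orbit sums $O(x)$, and by the argument of [MSZ, 3.6] the induced form on ${}_{\BF(q)}\BV_q$ is well-defined and matches the upstairs computation on the classes $g^{(a)}_\eta = \pi(\wt f^{(a)}_\eta)$. Matching these against the downstairs values $(\ul f^{(a)}_\eta, \ul f^{(a)}_{\eta'})$ is then a direct comparison that should come out equal by the very design of the Cartan datum $\ul X$ in (1.6.1).

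The main obstacle I anticipate is the off-diagonal pairings, namely $(\Phi(x),\Phi(y))$ for monomials $x,y$ involving generators from distinct orbits $\eta \ne \eta'$. For these one cannot simply read off a single self-pairing; one must control how $r$ interacts with the elements $\wt f^{(a)}_\eta$ modulo $J$, and in particular ensure that the cross terms produced by $r(\wt f^{(a)}_\eta)$ land in $J\otimes(\cdots) + (\cdots)\otimes J$ except for the diagonal contribution. This is where the non-admissibility genuinely bites: in the $\d_\eta = 2$ case $\wt f^{(a)}_\eta$ is not a single divided power but the mixed expression $f_i^{(a)}f_{i'}^{(2a)}f_i^{(a)}$, so $r(\wt f^{(a)}_\eta)$ has many terms, and I would need the defining relation $f_i^{(a)}f_{i'}^{(2a)}f_i^{(a)} = f_{i'}^{(a)}f_i^{(2a)}f_{i'}^{(a)}$ together with the $A_2$-computations of Section 2.1 to collapse the unwanted contributions into orbit sums lying in $J$.

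Granting part (i), part (ii) is immediate: if $\Phi(x) = 0$ then $(\Phi(x),\Phi(y)) = 0$ for all $y \in {}_{\BA'}\ul\BU_q^-$, whence $(x,y) = 0$ for all such $y$ by (i). Since the induced form on ${}_{\BF(q)}\ul\BU_q^-$ is non-degenerate by [MSZ, Prop. 3.7], it follows that $x = 0$ in ${}_{\BF(q)}\ul\BU_q^-$, and since ${}_{\BA'}\ul\BU_q^-$ embeds in its scalar extension, $x = 0$ already in ${}_{\BA'}\ul\BU_q^-$. Thus $\Phi$ is injective, completing the proof.
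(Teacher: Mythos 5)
You take essentially the same route as the paper: extend scalars to $\BF(q)$, prove (i) on the generators $\wt f_{\eta}$ via the primitivity formula $r(\wt f_{\eta}) \equiv \wt f_{\eta}\otimes 1 + 1 \otimes \wt f_{\eta} \bmod J_1$ (the cross terms in the $\d_{\eta}=2$ case collapsing into orbit sums by the Serre relation $f_if_{i'}^{(2)} + f_{i'}^{(2)}f_i = f_{i'}f_if_{i'}$, exactly the computation you anticipate), reduce to the pairings of the $\wt f_{\eta}$, and deduce (ii) from non-degeneracy of the form on ${}_{\BF(q)}\ul\BU_q^-$ just as the paper does. Two small corrections: the off-diagonal pairings you single out as the main obstacle are in fact trivial, since $(\wt f_{\eta}, \wt f_{\eta'}) = 0$ for $\eta \ne \eta'$ by weight considerations alone, and the diagonal match is not automatic ``by the design of (1.6.1)'' --- the paper computes $(\wt f_{\eta}, \wt f_{\eta}) = (1-q^2)^{-4}$ against $(\ul f_{\eta}, \ul f_{\eta})_1 = (1-q^8)^{-1}$, and these coincide only because $\ch \BF = 2$, where $(1-q^2)^4 = 1 - q^8$.
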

   
\begin{proof}
The proposition is proved in a similar way as in 
[SZ2, Prop. 2.8], [MSZ, Prop. 3.8].
The map $\Phi$ can be extended to a homomorphism 
$\wt\Phi : {}_{\BF(q)}\ul\BU_q^- \to {}_{\BF(q)}\BV_q$. 
In order to prove (ii), it is enough to show that $\wt\Phi$ is injective. 
This follows from (i) since the bilinear form on ${}_{\BF(q)}\ul\BU_q^-$ 
is non-degenerate.  We prove (i).
The $\BA$-submodule $J_1$ of $({}_{\BA}\BU_q^-\otimes {}_{\BA}\BU_q^-)^{\s}$ 
is defined as in the proof of Proposition 3.8 in [MSZ]. 
Note that in our case, $\ve = 2$ or 4, $J_1$ coincides with the $\BA$-submodule 
generated by $x + \s(x)$ such that $\s^2(x) = x, \s(x) \ne x$. 
We note the 
formula, for $\eta \in \ul I$, 
\begin{equation*}
\tag{2.10.1}
r(\wt f_{\eta}) \equiv \wt f_{\eta}\otimes 1 + 1 \otimes \wt f_{\eta} \mod J_1.
\end{equation*}
In fact, (2.10.1) was proved in [MSZ] if $\d_{\eta} = 1$. 
We verify this for $\eta$ such that $\d_{\eta} = 2$. 
Assume that $\eta = \{ i,i'\}$ with $(\a_i, \a_{i'}) = -1$. 
We have 
\begin{align*}
\tag{2.10.2}
r(f_if_{i'}^{(2)}f_i) &= [2]\iv (f_i\otimes 1 + 1 \otimes f_i)
                               (f_{i'}\otimes 1 + 1 \otimes f_{i'})^2
                               (f_i\otimes 1 + 1 \otimes f_i)  \\
      &= f_if_{i'}^{(2)}f_i \otimes 1 + 1 \otimes f_if_{i'}^{(2)}f_i + Z, 
\end{align*}
where 
\begin{align*}
Z &= q\iv(f_if_{i'} \otimes f_{i'}f_i + f_{i'}f_i \otimes f_if_{i'}) \\
     &\quad + q[2](f_i^{(2)}\otimes f_{i'}^{(2)} + f_{i'}^{(2)}\otimes f_i^{(2)}) \\ 
     &\quad + f_i\otimes f_if_{i'}^{(2)} + f_i \otimes f_{i'}^{(2)}f_i 
                 + f_{i'}\otimes f_if_{i'}f_i  \\
     &\quad + f_{i'}^{(2)}f_i \otimes f_i + f_if_{i'}^{(2)} \otimes f_i
                 + f_if_{i'}f_i \otimes f_{i'}. 
\end{align*}
The first two terms in $Z$ are of the form $x + \s(x)$, thus they are 
contained in $J_1$. 
In the third term, by using the Serre relation 
$f_if_{i'}^{(2)} + f_{i'}^{(2)}f_i = f_{i'}f_if_{i'}$. we have
\begin{equation*}
f_i\otimes f_if_{i'}^{(2)} + f_i\otimes f_{i'}^{(2)}f_i= f_i\otimes f_{i'}f_if_{i'}. 
\end{equation*}
Thus the third term is contained in $J_1$.  Similarly, the 4th term is contained in $J_1$. 
Hence $Z \in J_1$, and (2.10.1) holds. 
Then the proof of (2.8.2) and (2.8.7) in [SZ2] is done in a similar 
way as in the proof of Proposition 3.8 in [MSZ]. 
As in [SZ2], [MSZ], the proof of the proposition is reduced to the computation of 
$(\wt f_{\eta}, \wt f_{\eta'})$. 
If $\d_{\mu} = \d_{\mu'} = 1$, $(\wt f_{\eta}, \wt f_{\eta'})$ was computed in 
(3.8.1) in [MSZ]. If $\d_{\eta} \ne \d_{\eta'}$, $(\wt f_{\eta}, \wt f_{\eta'}) = 0$ 
since the weights are different.  By the same reason, 
$(\wt f_{\eta}, \wt f_{\eta'}) = 0$ if $\d_{\eta} = \d_{\eta'} = 2$ 
and $\eta \ne \eta'$.   Hence we may assume that $\eta = \eta'$ with $\d_{\eta} = 2$.
We have
\begin{align*}
(\wt f_{\eta}, \wt f_{\eta}) = (f_if_{i'}^{(2)}f_i, f_if_{i'}^{(2)}f_i) 
              = [2]\iv(r(f_if_{i'}^{(2)}f_i), f_if_{i'}\otimes f_{i'}f_i).
\end{align*}
$r(f_if_{i'}^{(2)}f_i)$ is computed in (2.10.2).  By comparing the weights, 
the last term is equal to 
\begin{align*}
q\iv[2]\iv&(f_if_{i'}\otimes f_{i'}f_i + f_{i'}f_i\otimes f_if_{i'}, 
               f_if_{i'}\otimes f_{i'}f_i)  \\
&= q\iv[2]\iv \bigl((f_if_{i'}, f_if_{i'})(f_{i'}f_i, f_{i'}f_i) 
            + (f_{i'}f_i, f_if_{i'})(f_if_{i'}, f_{i'}f_i)\bigr)  \\
&= q\iv [2]\iv (q^2 +1)(f_i, f_i)(f_{i'}, f_{i'})(f_{i'}, f_{i'})(f_i, f_i) \\
&= (1 - q^2)^{-4}.  
\end{align*}

Thus we have
\begin{equation*}
(\wt f_{\eta}, \wt f_{\eta'}) = \begin{cases}
            (1 - q^2)^{-4}  &\quad\text{ if } \eta = \eta', \d_{\eta} = 2,  \\
            (1 - q^2)^{-|\eta|}    &\quad\text{ if } \eta = \eta', \d_{\eta} = 1,  \\
                              0                    &\quad\text{ if } \eta \ne \eta'.
                            \end{cases} 
\end{equation*}
Since $q_{\eta} = q^{d_{\eta}} = q^4$ if $\d_{\eta} = 2$, and 
$q_{\eta} = q^{|\eta|}$ if $\d_{\eta} = 1$ by (1.6.1), by comparing with 
\begin{equation*}
(\ul f_{\eta}, \ul f_{\eta'})_1 = \begin{cases}
                         (1 - q^2_{\eta})\iv  &\quad\text{ if $\eta = \eta'$ }  \\
                                0             &\quad\text{ if $\eta \ne \eta'$ }
                                \end{cases}
\end{equation*}
we obtain the result corresponding to (2.8.3) in [SZ2]. 
Now the assertion (i) follows from (2.8.3) as in [SZ2]. 
The proposition is proved. 
\end{proof}

\para{2.11.}
For the surjectivity of $\Phi$, the discussion in [SZ2] or [MSZ] 
cannot be applied directly. The main ingredient for the proof 
is the modified PBW-bases.  In the case where $X$ is of finite type, 
$\s$ acts on the set of modified PBW-bases as a permutation. 
In this case, the surjectivity will be proved in Theorem 3.7, 
in a similar way as in [SZ1].  While in the case where $X$ is of affine type, 
it is not certain whether $\s$ acts on the set of modified PBW-bases.  
In this case, we prove the surjectivity in Proposition 6.21, by applying the properties 
of Kashiwara operators.  The discussion is similar to [SZ2].  However, 
in contrast to [SZ2], we construct Kashiwara operators by making use of 
modified PBW-bases.   
\par\bigskip

\section{ Modified PBW-basis - finite case }

\para{3.1.}
In this section, we assume that $X$ is of finite type.  
Let $W$ (resp. $\ul W$) be the Weyl group of $X$ (resp. of $\ul X$).
Let $\ul \Bh$ be a sequence of $\ul I$ with respect to $\ul W$.
Then there exists a sequence $\Bh$ of $I$ obtained from $\ul\Bh$, unique 
up to the permutation of $i$ and $j$ such that $(\a_i,\a_j) = 0$  
(see [SZ1, 1.6]).
Let $\SX_{\Bh}$ be the PBW-basis of $\BU_q^-$ and $\ul\SX_{\ul\Bh}$ 
the PBW-basis of $\ul\BU_q^-$ as in 1.10.
It is shown in [SZ1, Lemma 1.8], in the case where $\s$ is admissible, 
that $\s$ induces a permutation on 
the set $\SX_{\Bh}$, 
and the set of $\s$-invariant PBW-basis $\SX_{\Bh}^{\s}$ 
corresponds bijectively to the set $\SX_{\ul \Bh}$, 
If $\s$ is not admissible, this does not hold, since for $X = A_2$, 
there does not exist $\s$-invariant PBW-basis (Lemma 2.2). 
In order to extend Lemma 1.8 in [SZ1] to the non-admissible
case, we will modify the definition of PBW-basis. 

\para{3.2.}
We recall the definition of PBW-bases in [SZ1].  Let 
$\Bh = (i_1, \dots, i_{\nu})$ be a sequence of $I$ (called the reduced sequence) 
corresponding 
to a reduced expression $w_0 = s_{i_1}\cdots s_{i_N}$ of $w_0 \in W$, 
where $N = l(w_0)$. 
Let $\vD \subset Q$ be the set of roots, and $\vD^+ \subset Q_+$ 
the set of positive roots.  
Then $\Bh$ gives a total
order $\{\b_1, \dots, \b_N \}$ of $\vD^+$, where $\b_i$ is 
defined by 
$\b_{i_k} = s_{i_1}\dots s_{i_{k-1}}(\a_{i_k})$. 
We define a root vector $f_{\b_k}^{(c)}$ corresponding to $\b_k \in \vD^+$ 
and $c \in \BN$ by 
\begin{equation*}
\tag{3.2.1}
f_{\b_k}^{(c)} = T_{i_1}\cdots T_{i_{k-1}}(f_{i_k}^{(c)}).
\end{equation*}
Note that $\b_{i_1} = \a_{i_1}$ and so $f_{\b_{i_1}}^{(c)} = f_{i_1}^{(c)}$. 
For $\Bc = (c_1, \dots, c_N) \in \BN^N$, 
set $L(\Bc,\Bh) = f_{\b_{i_1}}^{(c_1)}\cdots f_{\b_{N}}^{(c_{N})}$. 
Then $\SX_{\Bh} = \{ L(\Bc, \Bh) \mid \Bc \in \BN^N \}$ gives a PBW-basis
of $\BU_q^-$ associated to $\Bh$. 
For a sequence $\ul \Bh$ of $\ul I$, the PBW-basis $\ul\SX_{\ul \Bh}$ of $\ul\BU_q^-$ 
is defined similarly. 

\par
Let $\s : I \to I$ be the diagram automorphism. 
Here we assume that $\s$ is not necessarily admissible. 
$\s$ induces an automorphism 
$\s: W \to W$. Let $\ul W$ be the Weyl group associated to $\ul I$.
For each $\eta \in \ul I$, we denote by $t_{\eta} \in W$ the longest element 
in the Weyl subgroup $W_{\eta}$ of $W$ generated by $\{ s_i \mid i \in \eta\}$.  
Then $t_{\eta} \in W^{\s}$, 
and the correspondence $s_{\eta} \mapsto t_{\eta}$ gives an isomorphism 
$\ul W  \isom W^{\s}$.  
\par
Let $\ul\Bh = (\eta_1, \dots, \eta_{\ul N})$ 
be a reduced sequence of $\ul I$, where $\ul N = l(\ul w_0)$.  
For each $\eta \in \ul I$, we denote by $\Bh_{\eta}$ a 
reduced sequence of $W_{\eta}$.  Hence 
$\Bh_{\eta} = (i_1, \dots, i_{|\eta|})$ for $\eta = \{i_1, \dots, i_{|\eta|}\}$ 
if $\d_{\eta} = 1$, and $\Bh_{\eta} = (i,j,i)$ or $(j,i,j)$ if $\eta = \{ i,j\}$
with $\d_{\eta} = 2$.  
Then by a juxtaposition of $\Bh_{\eta}$ for each $\eta \in \ul I$, 
we obtain a reduced sequence $\Bh$ of $I$, namely,
\begin{equation*}
\tag{3.2.2}
\Bh = (\underbrace {i_1, \dots, i_a}_{\Bh_{\eta_1}}, 
         \underbrace {i_{a+1}, \dots, i_b}_{\Bh_{\eta_2}}, i_{b+1}, \dots )
\end{equation*}

\par
For each $\eta \in \ul I$, we define $\wt T_{\eta}$ by 
\begin{equation*}
\tag{3.2.3}
\wt T_{\eta} = \begin{cases}
                 \prod_{i \in \eta}T_i &\quad\text{ if $\d_{\eta} = 1$,}  \\ 
                 T_iT_jT_i = T_jT_iT_j &\quad\text{ if $\d_{\eta} = 2$ with 
                                           $\eta = \{ i,j\}$.}
               \end{cases}
\end{equation*} 
Then $\wt T_{\eta}$ does not depend on the choice of the order in $\eta$. 
Since $\s \circ T_i \circ \s\iv = T_{\s(i)}$, $\wt T_{\eta}$ commutes
with $\s$.                                                    

\par
Now assume that $\s$ is admissible. 
For each $\Bh$ and $\Bc \in \BN^N$, set
\begin{equation*}
\tag{3.2.4}
\wt f_{\eta_k}^{(\Bc_k)} = \prod_{i_{\ell} \in \eta_k}f^{(c_{\ell})}_{\b_{\ell}},
\end{equation*}
where $\Bc_k$ is a subsequence of $\Bc$ consisting of 
$c_{\ell}$ such that $i_{\ell} \in \eta_k$.  Thus 
we have $L(\Bc, \Bh) 
   = \wt f_{\eta_1}^{(\Bc_1)}\cdots \wt f_{\eta_{\ul N}}^{(\Bc_{\ul N})}$.  
Note that $\wt f_{\eta_k}^{(\Bc_k)}$ can be expressed as
\begin{equation*}
\tag{3.2.5}
\wt f_{\eta_k}^{(\Bc_k)} = \wt T_{\eta_1}\cdots \wt T_{\eta_{k-1}}
                      \bigl(\prod_{i_{\ell} \in \eta_k}f_{i_{\ell}}^{(c_{\ell})}\bigr).
\end{equation*}
It follows from (3.2.5) that 
\begin{equation*}
\s(\wt f_{\eta_k}^{(\Bc_k)}) = \wt T_{\eta_1}\cdots \wt T_{\eta_{k-1}}
              \bigl(\prod_{i_{\ell} \in \eta_k}f_{\s(i_{\ell})}^{(c_{\ell})}\bigr).
\end{equation*}
This implies that $\s(L(\Bc, \Bh)) = L(\s(\Bc), \Bh)$, where 
the action of $\s$ on $\Bc \in \BN^N$ is given, for each 
$\Bc_k = (a_1, \dots, a_{m-1}, a_m)$ by $\s(\Bc_k) = (a_m, a_1, \dots, a_{m-1})$.  
\par
For each $\ul \Bc = (c_1, \dots, c_{\ul N}) \in \BN^{\ul N}$, 
define $\Bc \in \BN^N$ by 
\begin{equation*}
\tag{3.2.6}
\Bc = (\underbrace{c_1, \dots, c_1}_{\eta_1}, 
          \underbrace{c_2, \dots, c_2}_{\eta_2}, c_3, \dots ), 
\end{equation*}
Then $L(\Bc, \Bh)$ is $\s$-invariant. Summing up the above discussion 
we have 
\begin{lem}[{[SZ1, Lemma 1.8]}]  
Assume that $\s$ is admissible.  Then 
$\s$ acts on $\SX_{\Bh}$, as a permutation, 
$\s : L(\Bc, \Bh) \mapsto L(\s(\Bc), \Bh)$. 
Let $\SX_{\Bh}^{\s}$ be the set of $\s$-invariant
PBW-basis of $\BU_q^-$. 
For each $\ul \Bc \in \BN^{\ul N}$, 
define $\Bc \in \BN^{N}$ as in (3.2.6). 
Then $L(\ul\Bc, \ul\Bh) \mapsto L(\Bc, \Bh)$ 
gives a bijection $\ul\SX_{\ul\Bh} \isom \SX_{\Bh}^{\s}$. 
\end{lem}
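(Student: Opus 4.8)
The plan is to translate the whole statement into a combinatorial statement about the parameter $\Bc$, exploiting the expression (3.2.5) that writes each block factor $\wt f_{\eta_k}^{(\Bc_k)}$ as the result of applying the modified braid operators $\wt T_{\eta_1}, \dots, \wt T_{\eta_{k-1}}$ to a monomial supported on the single orbit $\eta_k$. Essentially everything needed has already been assembled in 3.2, so the task is to organize those observations into the two assertions of the lemma.

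First I would establish the permutation action. Applying $\s$ to (3.2.5) and using $\s \circ T_i \circ \s\iv = T_{\s(i)}$, so that each $\wt T_{\eta_j}$ commutes with $\s$ (recorded in 3.2), one obtains $\s(\wt f_{\eta_k}^{(\Bc_k)}) = \wt T_{\eta_1}\cdots \wt T_{\eta_{k-1}}\bigl(\prod_{i_{\ell} \in \eta_k} f_{\s(i_{\ell})}^{(c_{\ell})}\bigr)$. Since $\s$ is admissible, the generators $f_i$ with $i \in \eta_k$ pairwise commute, so the inner monomial is unchanged as an element, only its exponents being relabelled according to the cyclic action of $\s$ on $\eta_k$; this identifies the inner factor with $\prod_{i_\ell \in \eta_k} f_{i_\ell}^{(c'_\ell)}$ for $(c'_\ell) = \s(\Bc_k)$ the cyclic shift. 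Taking the product over $k$ yields $\s(L(\Bc,\Bh)) = L(\s(\Bc),\Bh)$, and since $\Bc \mapsto L(\Bc,\Bh)$ is a bijection onto the basis $\SX_{\Bh}$, this exhibits $\s$ as a permutation of $\SX_{\Bh}$.

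Next I would identify the fixed points: by the previous step, $L(\Bc,\Bh)$ is $\s$-invariant precisely when $\s(\Bc) = \Bc$, and since $\s$ cyclically shifts each block $\Bc_k$, this holds exactly when every block is constant, i.e. when $\Bc$ has the shape (3.2.6). Finally, for the bijection, the assignment $\ul\Bc \mapsto \Bc$ of (3.2.6) is visibly an injection $\BN^{\ul N} \hra \BN^N$ whose image is exactly this set of block-constant vectors (the blocks being indexed by $\eta_1, \dots, \eta_{\ul N}$, matching the $\ul N$ coordinates of $\ul\Bc$); composing with the two PBW parametrizations $\ul\Bc \mapsto L(\ul\Bc,\ul\Bh)$ and $\Bc \mapsto L(\Bc,\Bh)$ then gives the desired bijection $\ul\SX_{\ul\Bh} \isom \SX_{\Bh}^{\s}$.

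The one point genuinely needing care is inside the first step: checking that the cyclic permutation of $\eta_k$ by $\s$, together with the commutativity of $\{ f_i \mid i \in \eta_k\}$ — which is exactly where admissibility is used — produces the cyclic shift $\s(\Bc_k) = (a_m, a_1, \dots, a_{m-1})$ and not some other rearrangement. Once this bookkeeping is pinned down, the rest is formal.
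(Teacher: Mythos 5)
Your proposal is correct and takes essentially the same route as the paper: the paper's own argument (the discussion in 3.2 that the lemma ``sums up'') likewise applies $\s$ to (3.2.5), uses $\s \circ \wt T_{\eta} = \wt T_{\eta} \circ \s$ together with the commutativity of $\{ f_i \mid i \in \eta_k \}$ coming from admissibility to obtain $\s(L(\Bc, \Bh)) = L(\s(\Bc), \Bh)$ with $\s$ acting by a cyclic shift on each block $\Bc_k$, and then identifies the fixed points with the block-constant parameters of (3.2.6). The bookkeeping point you flag is harmless in any case: since $\eta_k$ is a single $\s$-orbit, $\s$ acts on the positions of each block as one $|\eta_k|$-cycle, and invariance under any transitive cyclic permutation forces the block to be constant, whichever shift convention one adopts.
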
 

\para{3.4.}
We now assume that $\s$ is not admissible. 
We modify the definition of $\wt f_{\eta_k}^{(\Bc_k)}$ as follows. 
Assume that $\d_{\eta} = 2$ with $\eta = \{ i, j\}$.  Let 
$\Bh_{\eta} = (i,j,i)$ or $(j,i,j)$ be a reduced sequence associated 
to $W_{\eta}$. Let $\SX_{\Bh_{\eta}}$ be the set 
of PBW-basis of the subalgebra $\BL^-_{\eta}$ of $\BU_q^-$ isomorphic to 
$\BU_q(\Fs\Fl_3)^-$.  
For each PBW-basis $L(\Bc, \Bh_{\eta}) \in \SX_{\Bh_{\eta}}$ in $\BL^-_{\eta}$, 
the canonical basis $b(\Bc, \Bh_{\eta}) \in \BL^-_{\eta}$ can be assigned, 
and $\bB_{\eta} = \{ b(\Bc, \Bh_{\eta}) \mid \Bc \in \BN^3\}$
gives the canonical basis of $\BL^-_{\eta}$. 
We define $\wt f_{\eta_k}^{(\Bc_k)}$ by 
\begin{equation*}
\tag{3.4.1}
\wt f_{\eta_k}^{(\Bc_k)} = \wt T_{\eta_1}\cdots \wt T_{\eta_{k-1}}
                           (b(\Bc_k, \Bh_{\eta_k})).
\end{equation*}  
In the case where $\d_{\eta} = 1$, define $\wt f_{\eta_k}^{(\Bc_k)}$
by using (3.2.5).  For any $\Bc \in \BN^{N}$, define
\begin{equation*}
\tag{3.4.2}
L\nat(\Bc, \Bh) = \wt f_{\eta_1}^{(\Bc_1)}\cdots \wt f_{\eta_{\ul N}}^{(\Bc_{\ul N})}, 
\end{equation*} 
and set $\SX_{\Bh}\nat = \{ L\nat(\Bc, \Bh) \mid \Bc \in \BN^{N}\}$.   
In the case where $\d_{\eta} = 2$, $\s$ leaves $\BL^-_{\eta}$ invariant, and $\s$ permutes
the canonical basis $\bB_{\eta}$.  It follows that 
$\s(\wt f_{\eta_k}^{(\Bc_k)}) = \wt f_{\eta_k}^{(\Bc'_k)}$, where 
$\Bc_k'$ is determined by the condition that 
$\s(b(\Bc_k, \Bh_{\eta_k})) = b(\Bc_k', \Bh_{\eta_k})$.  
From this, we see that $\s$ acts on $\SX_{\Bh}\nat$ as a permutation. 
Note that, for a given $\ul\Bc$, the definition of $\Bc$ in (3.2.6) 
makes sense even for the non-admissible case (under the assignment 
$c \mapsto (c,c,c)$ for the case $\d_{\eta} = 2$), and one can define 
$L\nat(\Bc, \Bh)$.   
\par
The following result is a generalization of Lemma 3.3 to the case
where $\s$ is non-admissible. 

\begin{lem}  
Assume that $\s$ is not necessarily admissible. 
Then $\SX_{\Bh}\nat$ is a $\BQ(q)$-basis of $\BU_q^-$, and an 
$\BA$-basis of ${}_{\BA}\BU_q^-$.  $\s$ acts as a permutation 
on $\SX_{\Bh}\nat$. The correspondence $L(\ul\Bc, \ul\Bh) \mapsto L\nat(\Bc, \Bh)$
gives a bijection $\ul\SX_{\ul\Bh} \isom (\SX_{\Bh}\nat)^{\s}$.  
\end{lem}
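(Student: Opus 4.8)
The plan is to treat all three assertions by reducing to a blockwise comparison with the ordinary PBW-basis $\SX_{\Bh}$, exploiting that $L\nat(\Bc,\Bh)$ and $L(\Bc,\Bh)$ differ only inside the blocks $\eta_k$ with $\d_{\eta_k}=2$, where the PBW monomial of the rank-two subalgebra $\BL^-_{\eta_k}\cong\BU_q(\Fs\Fl_3)^-$ has been replaced by the canonical basis element $b(\Bc_k,\Bh_{\eta_k})$. Writing $\wt T_{<k}=\wt T_{\eta_1}\cdots\wt T_{\eta_{k-1}}$, the block structure (3.2.2) together with (3.2.4) gives the factorization $L(\Bc,\Bh)=\prod_k \wt T_{<k}\bigl(L(\Bc_k,\Bh_{\eta_k})\bigr)$, while (3.4.1)--(3.4.2) give $L\nat(\Bc,\Bh)=\prod_k \wt T_{<k}\bigl(b(\Bc_k,\Bh_{\eta_k})\bigr)$.

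For the basis assertion I would use the standard triangularity between the PBW-basis and the canonical basis of $\BL^-_{\eta_k}$: one has $b(\Bc_k,\Bh_{\eta_k})=\sum_{\Bc_k'}p_{\Bc_k,\Bc_k'}L(\Bc_k',\Bh_{\eta_k})$ with $p_{\Bc_k,\Bc_k}=1$ and the remaining $p_{\Bc_k,\Bc_k'}\in q\BZ[q]$ supported on strictly smaller indices in the PBW ordering (for $\d_{\eta_k}=1$ this matrix is the identity, since a product of divided powers of orthogonal generators is already canonical). Applying the linear operator $\wt T_{<k}$ and multiplying the factors over $k$ yields the distribution identity $L\nat(\Bc,\Bh)=\sum_{\Bc'}\bigl(\prod_k p_{\Bc_k,\Bc_k'}\bigr)L(\Bc',\Bh)$. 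Thus the transition matrix from $\SX_{\Bh}$ to $\SX_{\Bh}\nat$ has entries $\prod_k p_{\Bc_k,\Bc_k'}$; it has unit diagonal and is triangular with respect to the blockwise order (with finite down-sets in each weight space), hence is invertible over $\BA$. Since $\SX_{\Bh}$ is a $\BQ(q)$-basis of $\BU_q^-$ and an $\BA$-basis of ${}_{\BA}\BU_q^-$, the same follows for $\SX_{\Bh}\nat$.

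For the permutation and bijection assertions I would argue as indicated in 3.4. Each $\wt T_\eta$ commutes with $\s$, hence so does $\wt T_{<k}$; on a block with $\d_{\eta_k}=1$, $\s$ acts by the cyclic shift of $\Bc_k$ as in Lemma 3.3, and on a block with $\d_{\eta_k}=2$, $\s$ stabilizes $\BL^-_{\eta_k}$ and permutes $\bB_{\eta_k}$, so $\s(b(\Bc_k,\Bh_{\eta_k}))=b(\Bc_k',\Bh_{\eta_k})$ for a unique $\Bc_k'$. Applying $\s$ to the factorization of $L\nat(\Bc,\Bh)$ then gives $\s(L\nat(\Bc,\Bh))=L\nat(\Bc',\Bh)$, so $\s$ permutes $\SX_{\Bh}\nat$. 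For the bijection, the assignment $\ul\Bc\mapsto\Bc$ of (3.2.6) is injective and, $\SX_{\Bh}\nat$ being a basis, the induced map $L(\ul\Bc,\ul\Bh)\mapsto L\nat(\Bc,\Bh)$ is injective; and $L\nat(\Bc,\Bh)$ is $\s$-invariant precisely when each $\Bc_k$ is $\s$-fixed, i.e. constant if $\d_{\eta_k}=1$ and with $b(\Bc_k,\Bh_{\eta_k})$ $\s$-invariant if $\d_{\eta_k}=2$. For a rank-two block, (2.1.5) (equivalently Lemma 2.2 with (2.1.6)) identifies the $\s$-invariant canonical basis elements as exactly the $b((c,c,c),\Bh_{\eta_k})=f_j^{(c)}f_i^{(2c)}f_j^{(c)}$, so the $\s$-fixed tuples are precisely those coming from (3.2.6); thus the image is all of $(\SX_{\Bh}\nat)^\s$.

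The main obstacle will be the basis assertion: assembling the blockwise canonical-versus-PBW triangularity into a single globally invertible transition, in particular establishing the distribution identity and checking that the transition is unitriangular over $\BA$ (not merely over $\BQ(q)$), so that the integral-form statement also holds. Once this identity is in place, the permutation and bijection statements reduce to bookkeeping resting on (2.1.5) and the commutation of $\wt T_\eta$ with $\s$.
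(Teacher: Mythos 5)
Your proposal is correct and follows essentially the same route as the paper's own proof: blockwise unitriangularity between $b(\Bc_k,\Bh_{\eta_k})$ and the PBW monomials of $\BL^-_{\eta_k}$, transported by $\wt T_{\eta_1}\cdots\wt T_{\eta_{k-1}}$ into a global unitriangular transition matrix over $\BZ[q]$ with off-diagonal entries in $q\BZ[q]$, followed by the $\s$-equivariance of $\wt T_\eta$ and the identification (via Lemma 2.2 and (2.1.5)) of the $\s$-fixed blocks as $\Bc_k=(c,c,c)$. The only cosmetic discrepancy is the direction of the triangularity (the paper has $\Bc'_k>\Bc_k$ in the lexicographic order from the left, cf.\ (2.1.2), whereas you say ``smaller''), which is immaterial for invertibility and hence for the lemma.
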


\begin{proof}
Assume that $\eta_k = \{ i, j\}$ with $\d_{\eta_k} = 2$, 
and take $\Bh_{\eta_k} = (i,j, i)$.  Then 
$b(\Bc_k, \Bh_{\eta_k})$ is written as $b(\Bc_k, \Bh_{\eta_k}) = L(\Bc_k, \Bh_{\eta_k}) + z$, 
where $z$ is a $q\BZ[q]$-linear combination of PBW-bases 
$L(\Bc'_k, \Bh_{\eta_k})$, with $\Bc'_k > \Bc_k$ (the lexicographic order on $\BN^3$ 
from the left). 
Furthermore, 
$L(\Bc'_k, \Bh_{\eta_k}) = f_i^{(c_1')}T_i(f_j^{(c_2')})T_iT_j(f_i^{(c_3')})$ 
for $\Bc'_k = (c_1', c_2', c_3')$. 
It follows that 
\begin{align*}
\wt T_{\eta_1}\cdots \wt T_{\eta_{k-1}}(L(\Bc'_k, \Bh_{\eta_k}))
               &= T_{i_1}\cdots T_{i_{\ell-3}}
                   \bigl(f_{i_{\ell-2}}^{(c_1')}
                     T_{i_{\ell-2}}(f_{i_{\ell-1}}^{(c_2')})
                        T_{i_{\ell-2}}T_{i_{\ell-1}}(f_{i_{\ell}}^{(c_3')})\bigr)  \\
               &= f_{\b_{\ell-2}}^{(c_1')}f_{\b_{\ell-1}}^{(c_2')}
                                f_{\b_{\ell}}^{(c_3')}, 
\end{align*}  
where we write $\Bh_{\eta_k} = (i_{\ell-2}, i_{\ell - 1}, i_{\ell})$ as a subsequence of $\Bh$.
Thus $\wt f_{\eta_k}^{(\Bc_k)}$ is a linear combination of 
$f_{\b_{\ell-2}}^{(c_1')}f_{\b_{\ell-1}}^{(c_2')}f_{\b_{\ell}}^{(c_3')}$ for 
various $(c_1', c_2', c_3')$.
If $\eta_k$ is such that $\d_{\eta_k} = 1$, $\wt f_{\eta_k}^{(\Bc_k)}$ 
is the same as in 3.2. 
Hence $L\nat(\Bc, \Bh)$ can be written as 
$L\nat(\Bc,\Bh) = L(\Bc, \Bh) + z'$, where $z'$ is a $q\BZ[q]$-linear 
combination of PBW-bases $L(\Bc',\Bh)$ with $\Bc' > \Bc$, the lexicographic 
order on $\BN^{N}$ from the left. 
Here the transition matrix between $\SX_{\Bh}$ and $\SX\nat_{\Bh}$ is 
expressed as a triangular matrix 
where the diagonal entries are 1, and off diagonals are contained 
in $q\BZ[q]$.  It follows that $\SX_{\Bh}\nat$ gives an $\BA$-basis of 
${}_{\BA}\BU_q^-$, and an $\BQ(q)$-basis of $\BU_q^-$.  
We know already that $\s$ acts on $\SX_{\Bh}\nat$ as a permutation. 
From the discussion in 3.4, for $\eta_k$ such that $\d_{\eta_k} = 2$, 
$\wt f_{\eta_k}^{(\Bc_k)}$ is $\s$-invariant if and only if $b(\Bc_k, \Bh_{\eta_k})$
is $\s$-invariant. By Lemma 2.2, this condition is given by 
$c_{\Bk} = (c, c, c)$ for some $c \in \BN$.   
Hence the condition that $L\nat(\Bc, \Bh)$ is $\s$-invariant is given 
by $\Bc \in \BN^{N}$ obtained from $\ul\Bc$ as in (3.2.6).  The lemma is proved. 
\end{proof}

\para{3.6.}
$\SX\nat_{\Bh} = \{ L\nat(\Bc, \Bh) \}$ is called  the modified PBW-basis associated to
$\Bh$. Concerning the relation with the canonical basis, 
the modified PBW-basis satisfies a similar property as 
the PBW-basis. The proof is straightforward from the construction in 3.4.
Let $b(\Bc, \Bh)$ be the canonical basis corresponding to $L(\Bc, \Bh)$. Then 
\begin{equation*}
\tag{3.6.1}
b(\Bc, \Bh) = L\nat(\Bc, \Bh) + \sum_{\Bc < \Bd}a_{\Bd}L\nat(\Bd, \Bh),
\end{equation*}
where $a_{\Bd} \in q\BZ[q]$. 
Note that the canonical bases are characterized by (3.6.1) and 
by the property $\ol{b(\Bc, \Bh)} = b(\Bc, \Bh)$. 
\par 
We consider the map $\pi: {}_{\BA'}\BU_q^{-,\s} \to \BV_q$.
The image of $\bB^{\s}$ (resp. $(\SX_{\Bh}\nat)^{\s}$) 
on $\BV_q$ gives an $\BA'$-basis of $\BV_q$, which we denote by the same symbol.  
For $L(\ul\Bc, \ul\Bh) \in \ul\SX_{\ul\Bh}$, we denote by $b(\ul\Bc, \ul\Bh)$ 
the corresponding canonical basis in $\ul\BU_q^-$.  
\par
In the case where $X$ is of finite type, we can prove 
Theorem 2.6, assuming Proposition 2.8, in a similar way 
as in [SZ1]. 
The following result is a generalization of [SZ1, Thm. 1.14] to the non-admissible
case.

\begin{thm}  
Assume that $(X, \ul X) = (A_{2n}, C_n)$, i.e. $(F_n1)$ in 1.8. 
Let $L(\ul\Bc, \ul\Bh) \mapsto L\nat(\Bc, \Bh)$ be the bijection 
$\ul\SX_{\ul\Bh} \isom (\SX_{\Bh}\nat)^{\s}$ given in Lemma 3.5.
Let $\Phi: {}_{\BA'}\ul\BU_q^- \to \BV_q$ be the homomorphism 
given in Proposition 2.8.  Then 
\begin{enumerate}
\item \  The map $\Phi$ gives an isomorphism ${}_{\BA'}\ul\BU_q^- \isom \BV_q$. 
\item   \ $\Phi(L(\ul\Bc, \ul\Bh)) = L\nat(\Bc,\Bh)$.
\item \ $\Phi(b(\ul\Bc,\ul\Bh)) = b(\Bc, \Bh)$. 
\end{enumerate}
\end{thm}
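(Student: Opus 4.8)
The plan is to prove (ii) first, to deduce the isomorphism (i) from it, and then to obtain (iii) from (ii) via the characterisation of canonical bases by triangularity and bar-invariance; throughout I use that $\Phi$ is a homomorphism (Proposition 2.8), that it is injective (Proposition 2.10), and the bijection $\ul\SX_{\ul\Bh}\isom(\SX_{\Bh}\nat)^{\s}$ of Lemma 3.5. For (ii) I would first observe that, when $\Bc$ is obtained from $\ul\Bc$ as in (3.2.6), every factor $\wt f_{\eta_k}^{(\Bc_k)}$ of $L\nat(\Bc,\Bh)$ is itself $\s$-invariant: the tuple $\Bc_k$ is constant, so $b(\Bc_k,\Bh_{\eta_k})$ is $\s$-invariant by Lemma 2.2, and $\wt T_{\eta_1}\cdots\wt T_{\eta_{k-1}}$ commutes with $\s$. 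Hence each factor lies in ${}_{\BA'}\BU_q^{-,\s}$, the projection $\pi$ is multiplicative on the product, and $\pi(L\nat(\Bc,\Bh))=\prod_k\pi(\wt f_{\eta_k}^{(\Bc_k)})$. Writing $L(\ul\Bc,\ul\Bh)$ as the ordered product of the root vectors $T_{\eta_1}\cdots T_{\eta_{k-1}}(\ul f_{\eta_k}^{(c_k)})$ and using that $\Phi$ is a homomorphism, statement (ii) is reduced to the single root-vector identity
\[
\Phi\bigl(T_{\eta_1}\cdots T_{\eta_{k-1}}(\ul f_{\eta_k}^{(c)})\bigr)
   =\pi\bigl(\wt T_{\eta_1}\cdots\wt T_{\eta_{k-1}}(\wt f_{\eta_k}^{(c)})\bigr)
   \qquad(c\in\BN),
\]
whose right-hand side is $\pi$ of an $\s$-invariant element of ${}_{\BA'}\BU_q^-$. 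For $k=1$ this is the definition $\Phi(\ul f_{\eta_1}^{(c)})=g_{\eta_1}^{(c)}=\pi(\wt f_{\eta_1}^{(c)})$.

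The main step is to prove this identity by induction on $k$, the inductive step being governed by the compatibility of the braid operators $T_\eta$ on $\ul\BU_q$ with the operators $\wt T_\eta$ on $\BU_q$. I would record that each $\wt T_\eta$ commutes with $\s$ and sends an orbit sum $O(x)$ to $O(\wt T_\eta(x))$; consequently, along the nested negative-part elements produced by the PBW construction, the passage through $\pi$ transports $\wt T_\eta$ to an operation on $\BV_q$ intertwining $T_\eta$ via $\Phi$. By the usual rank-two reduction for PBW-bases, the content of this compatibility is a comparison inside the rank-two sub-datum of $\ul X$ spanned by $\eta_k$ and its predecessor, and I expect this to be the principal obstacle. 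When both nodes have $\d=1$ the computation is as in the admissible case [SZ1]; the genuinely new case is the double bond $(\ul{n-1},\ul n)$ of $C_n$, where the orbit $\eta=\{i,j\}$ has $\d_\eta=2$, so that $\wt T_\eta=T_iT_jT_i$ and $\wt f_\eta^{(a)}=f_i^{(a)}f_j^{(2a)}f_i^{(a)}$, and the verification takes place in the type-$A_4$ subalgebra of $\BU_q^-$ attached to this edge, resting on the $A_2$-canonical-basis computations of Proposition 2.4 and Lemma 3.5 and on the vanishing in $J$ of the non-$\s$-invariant intermediate terms. Granting the identity, $\Phi$ carries the $\BA'$-basis $\ul\SX_{\ul\Bh}$ of ${}_{\BA'}\ul\BU_q^-$ bijectively onto the $\BA'$-basis $\pi((\SX_{\Bh}\nat)^{\s})$ of $\BV_q$ (Lemma 3.5 and 3.6); with the injectivity of Proposition 2.10 this proves (i).

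Finally, for (iii) I would note that $\Phi$ commutes with the bar-involution: the generators $\ul f_\eta^{(a)}$ and the elements $\wt f_\eta^{(a)}=f_i^{(a)}f_j^{(2a)}f_i^{(a)}$ are bar-invariant, and the bar-involution descends to $\BV_q$ since it preserves $\BU_q^{-,\s}$ and $J$. Because $b(\ul\Bc,\ul\Bh)=L(\ul\Bc,\ul\Bh)+\sum_{\ul\Bc<\ul\Bd}a_{\ul\Bd}L(\ul\Bd,\ul\Bh)$ with $a_{\ul\Bd}\in q\BZ[q]$, identity (ii) shows that $\Phi(b(\ul\Bc,\ul\Bh))$ is bar-invariant and is triangular, with leading term $\pi(L\nat(\Bc,\Bh))$ and remaining coefficients in $q\BZ[q]$, relative to the basis $\pi((\SX_{\Bh}\nat)^{\s})$. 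On the other hand, $b(\Bc,\Bh)$ is $\s$-invariant for such $\Bc$ (as $\s$ permutes the canonical basis compatibly with the modified PBW-basis), and projecting (3.6.1) while using that its coefficients are constant on $\s$-orbits (so that the non-invariant terms assemble into orbit sums lying in $J$) shows that $\pi(b(\Bc,\Bh))$ enjoys the same two properties. By the uniqueness in the characterisation (3.6.1), $\Phi(b(\ul\Bc,\ul\Bh))=b(\Bc,\Bh)$, which is (iii).
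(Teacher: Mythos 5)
Your architecture coincides with the paper's own proof: you reduce (ii) to the single root-vector identity, which is exactly (3.7.1) in the paper, reduce that to rank two with $(A_4,C_2)$ as the only genuinely new configuration, deduce (i) from (ii) together with the injectivity of Proposition 2.10 and the fact that $\pi((\SX_{\Bh}\nat)^{\s})$ is an $\BA'$-basis of $\BV_q$, and obtain (iii) from (ii) via bar-invariance of $\Phi$ and the triangular characterization (3.6.1) --- all precisely as the paper does, and your elaboration of (iii) (coefficients of $b(\Bc,\Bh)$ constant on $\s$-orbits, so the non-invariant terms assemble into orbit sums lying in $J$) is a correct expansion of the paper's terser remark. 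However, one intermediate assertion is false as stated: the claim that ``the passage through $\pi$ transports $\wt T_{\eta}$ to an operation on $\BV_q$ intertwining $T_{\eta}$ via $\Phi$'' cannot hold, because $\wt T_{\eta}$ is an automorphism of $\BU_q$ and does not preserve $\BU_q^-$ (indeed $\wt T_{\eta}(\BL^-_{\eta}) = \BL^+_{\eta}$, cf.\ the proof of Proposition 6.10), hence preserves neither ${}_{\BA'}\BU_q^{-,\s}$ nor $J$ and induces nothing on $\BV_q$. The paper never invokes such an operator: (3.7.1) is proved not by induction on $k$ through an intertwiner but by the rank-two reduction (the argument of [SZ1, Thm.\ 1.14]) followed by direct verification. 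Since you ultimately fall back on that same reduction, the misstep is not load-bearing, but it should be excised.

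The substantive deficiency is that the rank-two verification you defer as ``the principal obstacle'' is the actual content of the paper's proof and is left entirely unexecuted. After using the $*$-action to fix $\ul\Bh = (\ul 1, \ul 2, \ul 1, \ul 2)$, the paper must check four identities (a)--(d); (a) and (d) are formal since $\ul T_{\ul 1}\ul T_{\ul 2}\ul T_{\ul 1}(\ul f_{\ul 2}) = \ul f_{\ul 2}$, but (b) and (c) require explicit congruences in the $A_4$-algebra modulo $J$, such as $f_2f_{2'}^{(2)}f_2f_1f_{1'} \equiv q^4 f_1f_{1'}f_2f_{2'}^{(2)}f_2 + f_{1'22'}f_{122'} \mod J$, together with the expansions of $f_2f_{2'}^{(2)}f_2f_1^{(2)}f_{1'}^{(2)}$ and $f_1f_{1'}f_2f_{2'}^{(2)}f_2f_1f_{1'}$, in which the non-invariant terms must be explicitly recognized as sums $x + \s(x)$ or eliminated via Serre-type relations like $f_2^{(2)}f_{2'} - f_2f_{2'}f_2 + f_{2'}f_2^{(2)} = 0$; this is carried out with the commutation machinery of Section 4. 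Your proposed mechanism (Proposition 2.4 plus vanishing in $J$ of non-$\s$-invariant intermediate terms) is the correct one, but until these computations are actually performed, the proof of (ii) --- and with it (i) and (iii) --- remains incomplete.
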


\begin{proof}
We know already that $\Phi$ is injective.  The surjectivity of 
$\Phi$ will follow from (ii).  Thus (i) follows from  (ii). 
We prove (ii).
By (3.4.1) and (3.4.2), in order to prove (ii), it is enough to show that 
\begin{equation*}
\tag{3.7.1}
\Phi\bigl(\ul T_{\eta_1}\cdots \ul T_{\eta_{k-1}}(\ul f_{\eta_k})\bigr)
    = \pi\bigl(\wt T_{\eta_1}\cdots \wt T_{\eta_{k-1}}(\wt f_{\eta_k})\bigr),
\end{equation*}
where $\ul T_{\eta}$ denotes the braid group action on $\ul\BU_q$. 
By a similar argument as in the proof of Theorem 1.14 in [SZ1], 
the proof of (3.7.1) is reduced to the case where the rank of $\ul X$ is 2. 
Thus we nay assume that $X $ is of type $A_4$ and $\ul X $ is of type $C_2$.
We use the notation $I = \{ 1,1', 2, 2'\}$ with 
$\s : 1 \lra 1', 2 \lra 2'$, $\ul I = \{ \ul 1, \ul 2\}$. 
By considering the 
$*$-action on $\BU_q^-$ and $\ul\BU_q^-$, we may assume that 
$\ul\Bh = (\ul 1, \ul 2, \ul 1, \ul 2)$ (see [SZ1, 4.1]).  
It is enough to check the following equalities,
\par\medskip 
(a) \
$\Phi(\ul f_1) = \pi(\wt f_1)$, 
\par 
(b) \ 
$\Phi(\ul T_{\ul 1}(\ul f_{\ul 2})) = \pi(\wt T_1(\wt f_2))$, 
\par 
(c) \
$\Phi(\ul T_{\ul 1}\ul T_{\ul 2}(\ul f_{\ul 1})) = \pi(\wt T_1\wt T_2(\wt f_1))$, 
\par 
(d) \
$\phi(\ul T_{\ul 1}\ul T_{\ul 2}\ul T_{\ul 1}(\ul f_{\ul 2}))
                     = \pi(\wt T_1\wt T_2\wt T_1(\wt f_2))$.
\par\medskip  
In the discussion below, we use the notation in Section 4 
for the description of PBW-basis. 
Since $\ul T_{\ul 1}\ul T_{\ul 2}\ul T_1(\ul f_{\ul 2}) = \ul f_{\ul 2}$, 
 $\wt T_1\wt T_2 \wt T_1 (\wt f_2) = \wt  f_2$, (a) and (d) follows from 
the definition of $\Phi$. We verify (b) and (c).
First we show (c).  It is easy to see that 
$\ul T_{\ul 1}\ul T_{\ul 2}(\ul f_{\ul 1}) = 
       \ul f_2\ul f_{\ul 1} - q^4\ul f_{\ul 1}\ul f_{\ul 2}$. 
Hence we have
\begin{equation*}
\Phi\bigl(\ul T_{\ul 1}\ul T_{\ul 2}(\ul f_{\ul 1})\bigr) = 
      \pi(f_2f_{2'}^{(2)}f_2f_1f_{1'} - q^4f_1f_{1'}f_2f_{2'}^{(2)}f_2).
\end{equation*}
On the other hand, 
$\wt T_1\wt T_2 (\wt f_1) = T_1T_{1'}T_2T_{2'}T_2(f_1f_{1'}) = f_{1'22'}f_{122'}$. 
By a similar computation as in Section 4, one can check that 
\begin{equation*}
f_2f_{2'}^{(2)}f_2f_1f_{1'} \equiv q^4f_1f_{1'}f_2f_{2'}^{(2)}f_2 + f_{1'22'}f_{122'}
     \mod J.
\end{equation*} 
Hence (c) holds.  For (b), we have
$\ul T_{\ul 1}(\ul f_{\ul 2}) = \ul f_{\ul 2}\ul f_{\ul 1}^{(2)} 
  - q^2\ul f_{\ul 1}\ul f_{\ul 2}\ul f_{\ul 1} + q^4\ul f_1^{(2)}\ul f_{\ul 2}$.
Hence
\begin{align*}
\tag{3.7.2}
\Phi(\ul T_{\ul 1}(\ul f_{\ul 2})) = 
    \pi(f_2f_{2'}^{(2)}f_2f_1^{(2)}f_{1'}^{(2)} - q^2f_1f_{1'}f_2f_{2'}^{(2)}f_2f_1f_{1'}
        + q^4f_1^{(2)}f_{1'}^{(2)}f_2f_{2'}^{(2)}f_2).
\end{align*} 
On the other hand, we have $T_1T_{1'}(f_2f_{2'}^{(2)}f_2) = f_{12}f_{1'2'}^{(2)}f_{12}$.
By a direct computation, we have

\begin{align*}
f_2f_{2'}^{(2)}f_2f_1^{(2)}f_{1'}^{(2)} 
      &\equiv q^8f_1^{(2)}f_{1'}^{(2)}f_2f_{2'}^{(2)}f_2  + f_{12}f_{1'2'}^{(2)}f_{12}  
               + q^2f_1f_{1'}f_{1'22'}f_{122'}  \mod J, \\
f_1f_{1'}f_2f_{2'}^{(2)}f_2f_1f_{1'} 
      &\equiv f_1f_{1'}f_{1'22'}f_{122'} + q^4[2]^2f_1^{(2)}f_{1'}^{(2)}f_2f_{2'}^{(2)}f_2
          \mod J.    
\end{align*}
Substituting this into the right hand side of (3.7.2), we obtain the 
equality (b).  Thus (3.7.1) is verified, and (ii) follows.    
\par
Since the bar-involution is compatible with the map 
$\Phi$, (iii) follows from (ii) by using (3.6.1) and the corresponding formula for
$\ul\BU_q^-$.  The theorem is proved. 
\end{proof}

\par\bigskip
\section{ The proof of Proposition 2.8}

\para{4.1.}
We assume that $X$ is irreducible, $\s$ is not admissible, 
and that $(X, \ul X)$ satisfies the condition in Theorem 2.6. 
For the proof of Proposition 2.8, we need to show, 
for $\eta, \eta' \in\ul I$,  that 
\begin{align*}
\tag{4.1.1}
&\sum_{k=0}^{1 - a_{\eta\eta'}}(-1)^kg_{\eta}^{(k)}g_{\eta'}
                      g_{\eta}^{(1-a_{\eta\eta'} - k)} = 0, 
                   \quad (\eta \ne \eta'), \\
\tag{4.1.2}
&[a]^!_{|\eta|}g_{\eta}^{(a)} = g_{\eta}^a, \quad (a \in \BN). 
\end{align*}

(4.1.2) is known if $\d_{\eta} = 1$ (see [SZ1, 3.1]), 
and it was proved 
in Proposition 2.4 if $\d_{\eta} = 2$. 
If $\d_{\eta} = \d_{\eta'} = 1$, (4.1.1) was already verified in 
[SZ1, 3.2]. Thus from  the tables in 1.8, for the proof of (4.1.1), 
it is enough to consider the case where $X = A_4, \ul X = C_2$
(note that we have excluded the cases ($A_n2'$) $\sim$ ($A_n4'$) and ($A_n5$) in 1.8). 
We write $I = \{ 1, 2, 2', 1'\}$
with $\s : 1 \lra 1', 2\lra 2'$.  Set $\ul I = \{ \ul 1, \ul 2\}$ 
with $\d_{\ul 1} = 1, \d_{\ul 2} = 2$.  
We have $(\a_{\ul 1}, \a_{\ul 1})_1 = 4, (\a_{\ul 2}, \a_{\ul 2})_1 = 8$. 
The Cartan matrix $\ul A$ is given by 
\begin{equation*}
\ul A = \begin{pmatrix}
          2  &  -1   \\
         -2  &  2
        \end{pmatrix}.
\end{equation*}
Thus we have only to prove the following two formulas.
\begin{align*}
\tag{4.1.3}
&g_{\ul 1}g_{\ul 2}^{(2)} - g_{\ul 2}g_{\ul 1}g_{\ul 2} + g_{\ul 2}^{(2)}g_{\ul 1} = 0,  \\
\tag{4.1.4}
&g_{\ul 2}g_{\ul 1}^{(3)} - g_{\ul 1}g_{\ul 2}g_{\ul 1}^{(2)} 
             + g_{\ul 1}^{(2)}g_{\ul 2}g_{\ul 1} - g_{\ul 1}^{(3)}g_{\ul 2} = 0.
\end{align*}

\para{4.2.}
Let $W = S_5$ be the Weyl group of type $A_4$, 
and $w_0$ the longest element
in $W$.  Then $l(w_0) = 10$. We consider the following reduced sequence $\Bh$ of $I$.
\begin{equation*}
\Bh = (i_1, i_2, \dots, i_{10}) = (1, 1', 2, 2', 2, 1, 1', 2, 2', 2) .
\end{equation*} 
Correspondingly, we have a total order of $\vD^+$ as follows;
\begin{equation*}
\vD^+ = \{ \b_1, \dots, \b_{10} \} = 
        \{ 1, 1', 12, 11'22', 1'2', 1'22', 122', 2, 22', 2'\},  
\end{equation*}
where $\b_k = s_{i_1}s_{i_2}\cdots s_{i_{k-1}}(\a_{i_k})$.
We define $f_{\b_k}^{(c)} = T_{i_1}\cdots T_{i_{k-1}}(f_{i_k}^{(c)})$, 
and set $L(\Bc, \Bh) = f_{\b_1}^{(c_1)}\cdots f_{\b_{10}}^{(c_{10})}$ 
for $\Bc = (c_1, \dots, c_{10}) \in \BN^{10}$. 
Then $\{ L(\Bc, \Bh) \mid \Bc \in \BN^{10}\}$ gives a PBW-basis of $\BU_q^-$.  
$f_{\b_1}, \dots, f_{\b_{10}}$ are written explicitly as follows;

\begin{equation*}
\begin{aligned}
f_{\b_1} &= f_1, \qquad f_{\b_2} = f_{1'}, \qquad f_{\b_{10}} = f_{2'}, \qquad
f_{\b_8} = f_2,  \qquad 
f_{\b_3} = f_2f_1 - qf_1f_2, \\
f_{\b_5} &= f_{2'}f_{1'} - qf_{1'}f_{2'}, \qquad f_{\b_9} = f_{2'}f_2 - qf_2f_{2'}, 
  \qquad f_{\b_7} = f_{22'}f_1 - qf_1f_{22'},  \\
f_{\b_6} &= f_2f_{1'2'} - qf_{1'2'}f_2, \qquad f_{\b_4} = f_{1'2'}f_{12} - qf_{12}f_{1'2'}.  
\end{aligned}
\end{equation*}

(4.1.3) and (4.1.4) are equivalent to the formulas
\begin{align*}
\tag{4.2.1}
&(f_1f_{1'})f_2^{(2)}f_{2'}^{(4)}f_2^{(2)} - f_2f_{2'}^{(2)}f_2(f_1f_{1'})f_2f_{2'}^{(2)}f_2
+ f_2^{(2)}f_{2'}^{(4)}f_2^{(2)}(f_1f_{1'}) \equiv 0  \mod J, \\
\tag{4.2.2}
&(f_2f_{2'}^{(2)}f_2)f_1^{(3)}f_{1'}^{(3)} - f_1f_{1'}(f_2f_{2'}^{(2)}f_2)f^{(2)}_1f^{(2)}_{1'}  \\
     &\qquad\qquad + f_1^{(2)}f_{1'}^{(2)}(f_2f_{2'}^{(2)}f_2)f_1f_{1'} 
     - f_1^{(3)}f_{1'}^{(3)}(f_2f_{2'}^{(2)}f_2) \equiv 0 \mod J.
\end{align*}

We shall prove (4.2.1) and (4.2.2) by expressing them in terms of
modified PBW-bases.  We compute them by making use of various commutation relations 
for $f_{\b_k}$.

\para{4.3.}
We prove (4.2.2).  
First we compute

\begin{align*}
f_1^{(2)}&f_{1'}^{(2)}(f_2f_{2'}^{(2)}f_2) f_1f_{1'}  \\
    &= q^2[3]\bigl(f_1^{(2)}f_{1'}^{(3)}f_{122'}f_2f_{2'} + 
                   f_1^{(3)}f_{1'}^{(2)}f_{1'22'}f_{2'}f_2\bigr)  \\
    &\quad + q\bigl(f_1^{(2)}f_{1'}^{(2)}f_{12}f_{1'22'}f_{2'}
                  + f_1^{(2)}f_{1'}^{(2)}f_{1'2'}f_{122'}f_2\bigr)  \\
    &\quad + q\bigl(f_1^{(2)}f_{1'}^{(2)}f_{1'2'}f_{12}f_2f_{2'} 
                  + f_1^{(2)}f_{1'}^{(2)}f_{12}f_{1'2'}f_{2'}f_2\bigr)  \\
    &\quad + q^3[3]\bigl(f_1^{(2)}f_{1'}^{(3)}f_{12}f_2f_{2'}^{(2)}
                  + f_1^{(2)}f_{1'}^{(3)}f_{12}f_{2'}^{(2)}f_2 
                  + f_1^{(3)}f_{1'}^{(2)}f_{1'2'}f_2f_{2'}f_2\bigr)  \\
    &\quad + q^4[3]^2f_1^{(3)}f_{1'}^{(3)}f_2f_{2'}f_2 
             + f_1^{(2)}f_{1'}^{(2)}f_{1'22'}f_{122'}.
\end{align*}
The first three terms in the right hand side are of the form 
$x + \s(x)$, hence contained in $J$. 
Using the relation 
$f_2^{(2)}f_{2'} - f_2f_{2'}f_2 + f_{2'}f_2^{(2)} = 0$, 
the fourth term can be rewritten as 
$q^3[3](f_1^{(2)}f_{1'}^{(3)}f_{12}f_{2'}f_{2}f_{2'} + 
      f_1^{(3)}f_{1'}^{(2)}f_{1'2'}f_2f_{2'}f_2)$, hence again contained in $J$.
It follows that 
\begin{equation*}
\tag{4.3.1}
\begin{split}
f_1^{(2)}f_{1'}^{(2)}&(f_2f_{2'}^{(2)}f_2) f_1f_{1'}  \\  
     &\equiv q^4[3]^2f_1^{(3)}f_{1'}^{(3)}f_2f_{2'}f_2 
             + f_1^{(2)}f_{1'}^{(2)}f_{1'22'}f_{122'} \mod J.
\end{split}
\end{equation*}

Next we compute 

\begin{align*}
f_2f_{2'}^{(2)}&f_2f_1^{(3)}f_{1'}^{(3)}  \\
   &= q^6(f_1f_{1'}^{(3)}f_{12}f_{122'}f_{2'} + f_1^{(3)}f_{1'}f_{1'2'}f_{1'22'}f_2) \\
   &\quad + q^8[2](f_1f_{1'}^{(3)}f_{12}^{(2)}f_{2'}^{(2)} 
                      + f_1^{(3)}f_{1'}f_{1'2'}^{(2)}f_2^{(2)}) \\
   &\quad + q^8(f_1^{(2)}f_{1'}^{(3)}f_{122'}f_2f_{2'} 
                      + f_1^{(3)}f_{1'}^{(2)}f_{1'22'}f_{2'}f_2)  \\
   &\quad + q^5(f_1^{(2)}f_{1'}^{(2)}f_{12}f_{1'2'}f_{2'}f_2 
                 + f_1^{(2)}f_{1'}^{(2)}f_{1'2'}f_{12}f_2f_{2'})  \\
   &\quad + q^2(f_1f_{1'}^{(2)}f_{12}f_{1'2'}f_{122'}
                 + f_1^{(2)}f_{1'}f_{1'2'}f_{12}f_{1'22'}) \\
   &\quad + q^5(f_1^{(2)}f_{1'}^{(2)}f_{1'2'}f_{122'}f_2 
                 + f_1^{(2)}f_{1'}^{(2)}f_{12}f_{1'22'}f_{2'})  \\
   &\quad + q^9(f_1^{(2)}f_{1'}^{(3)}f_{12}f_2f_{2'}^{(2)}
                 + f_1^{(2)}f_{1'}^{(3)}f_{12}f_{2'}^{(2)}f_2 
                 + f_1^{(3)}f_{1'}^{(2)}f_{1'2'}f_2f_{2'}f_2) \\
   &\quad + q^3(f_1^{(2)}f_{1'}f_{1'2'}^{(2)}f_{12}f_2 
                  + f_1^{(2)}f_{1'}f_{12}f_{1'2'}^{(2)}f_2  
                  + f_1f_{1'}^{(2)}f_{12}f_{1'2'}f_{12}f_{2'})  \\
   &\quad + q^{12}f_1^{(3)}f_{1'}^{(3)}f_2f_{2'}^{(2)}f_2
          + q^4f_1^{(2)}f_{1'}^{(2)}f_{1'22'}f_{122'} 
          + f_1f_{1'}f_{12}f_{1'2'}^{(2)}f_{12}. 
\end{align*}
The first 6 terms in the right hand side 
are of the form $x + \s(x)$, hence are contained in $J$. 
The 7th term is contained in $J$ by the same reason as the previous computation.
By applying $T_1T_{1'}$ to the relation 
$f_2f_{2'}f_2 = f_{2'}f_2^{(2)} + f_2^{(2)}f_{2'}$, we obtain 
$f_{1'2'}f_{12}f_{1'2'} = f_{12}f_{1'2'}^{(2)} + f_{1'2'}^{(2)}f_{12}$.
Thus, by a similar reason, the 8th term is also contained in $J$.  
It follows that 
\begin{equation*}
\tag{4.3.2}
\begin{split}
f_2&f_{2'}^{(2)}f_2f_1^{(3)}f_{1'}^{(3)}  \\
    &\equiv q^{12}f_1^{(3)}f_{1'}^{(3)}f_2f_{2'}^{(2)}f_2
          + q^4f_1^{(2)}f_{1'}^{(2)}f_{1'22'}f_{122'} 
          + f_1f_{1'}f_{12}f_{1'2'}^{(2)}f_{12}   \mod J. 
\end{split}
\end{equation*} 

Finally we compute 

\begin{align*}
f_1f_{1'}(&f_2f_{2'}^{(2)}f_2)f_1^{(2)}f_{1'}^{(2)}  \\
   &= q^3[2]^2(f_1^{(2)}f_{1'}^{(2)}f_{12}f_{1'22'}f_{2'}
          + f_1^{(2)}f_{1'}^{(2)}f_{1'2'}f_{122'}f_2) \\
   &\quad + q^5[2][3](f_1^{(2)}f_{1'}^{(3)}f_{122'}f_2f_{2'}
               + f_1^{(3)}f_{1'}^{(2)}f_{1'22'}f_{2'}f_2)  \\
   &\quad + q^3[2]^2(f_1^{(2)}f_{1'}^{(2)}f_{12}f_{1'2'}f_{2'}f_2
               + f_1^{(2)}f_{1'}^{(2)}f_{1'2'}f_{12}f_2f_{2'})  \\
   &\quad + q^4[3](f_1f_{1'}^{(3)}f_{12}f_{122'}f_{2'}
               + f_1^{(3)}f_{1'}f_{1'2'}f_{1'22'}f_2)  \\
   &\quad + q^6[2][3](f_1f_{1'}^{(3)}f_{12}^{(2)}f_{2'}^{(2)}
               + f_1^{(3)}f_{1'}f_{1'2'}^{(2)}f_2^{(2)}) \\
   &\quad + q[2](f_1f_{1'}^{(2)}f_{12}f_{1'2'}f_{122'} 
               + f_1^{(2)}f_{1'}f_{1'2'}f_{12}f_{1'22'}) \\
   &\quad + q^6[2][3](f_1^{(2)}f_{1'}^{(3)}f_{12}f_{2'}^{(2)}f_2 
               + f_1^{(2)}f_{1'}^{(3)}f_{12}f_2f_{2'}^{(2)}
               + f_1^{(3)}f_{1'}^{(2)}f_{1'2'}f_2f_{2'}f_2)  \\
   &\quad + q^2[2](f_1^{(2)}f_{1'}f_{12}f_{1'2'}^{(2)}f_2 
               + f_1^{(2)}f_{1'}f_{1'2'}^{(2)}f_{12}f_2
               + f_1f_{1'}^{(2)}f_{12}f_{1'2'}f_{12}f_{2'}) \\
   &\quad + q^8[3]^2f_1^{(3)}f_{1'}^{(3)}f_2f_{2'}^{(2)}f_2
               + q^2[2]^2f_1^{(2)}f_{1'}^{(2)}f_{1'22'}f_{122'}
               + f_1f_{1'}f_{12}f_{1'2'}^{(2)}f_{12}.  
\end{align*}
The first 6 terms in the right hand side are of the form $x + \s(x)$, hence 
are contained in $J$. The 7th and 8th terms are contained in $J$ by the 
same reason as before. It follows that

\begin{equation*}
\tag{4.3.3}
\begin{split}
f_1f_{1'}(f_2f_{2'}^{(2)}&f_2)f_1^{(2)}f_{1'}^{(2)}  
  \equiv q^8[3]^2f_1^{(3)}f_{1'}^{(3)}f_2f_{2'}^{(2)}f_2  \\
               &+ q^2[2]^2f_1^{(2)}f_{1'}^{(2)}f_{1'22'}f_{122'}
               + f_1f_{1'}f_{12}f_{1'2'}^{(2)}f_{12} \mod J.
\end{split}
\end{equation*}  

Now putting together (4.3.1), (4.3.2) and (4.3.3), we see that 
the left hand side of (4.2.2) is equal to

\begin{equation*}
- F_3(q^2)f_1^{(3)}f_{1'}^{(3)}f_2f_{2'}^{(2)}f_2 
+ F_2(q^2)f_1^{(2)}f_{1'}^{(2)}f_{1'22'}f_{122'} 
  - F_1(q^2)f_1f_{1'}f_{12}f_{1'2'}^{(2)}f_{12}  \mod J,
\end{equation*}
where, for $a \ge 1$,  $F_a(q)$ is a polynomial in $q$ defined by 
\begin{equation*}
F_a(q) = \sum_{t = 0}^a(-1)^tq^{t(a-t)}
           \begin{bmatrix}
               a  \\
               t
           \end{bmatrix}.
\end{equation*}
Since $F_a(q)$ is identically zero by [L1, 1.3.4 (a)], 
(4.2.2) holds. 

\para{4.4}
We prove (4.2.1). 
We compute 
\begin{align*}
f_2f_{2'}^{(2)}f_2f_1f_{1'}
      &= q^2(f_1f_{1'22'}f_{2'}f_2 + f_{1'}f_{122'}f_2f_{2'}) \\
      &\quad + q(f_{12}f_{1'2'}f_{2'}f_2 + f_{1'2'}f_{12}f_2f_{2'}) \\
      &\quad + q(f_{12}f_{1'22'}f_{2'}  + f_{1'2'}f_{122'}f_2) \\
      &\quad + q^3(f_{1'}f_{12}f_2f_{2'}^{(2)} 
                + f_{1'}f_{12}f_{2'}^{(2)}f_2
                + f_1f_{1'2'}f_2f_{2'}f_2)  \\
      &\quad + q^4f_1f_{1'}f_2f_{2'}^{(2)}f_2 + f_{1'22'}f_{122'}.
\end{align*}
(Actually, this is a part of the computation of 
$f_1^{(2)}f_{1'}^{(2)}(f_2f_{2'}^{(2)}f_2)f_1f_{1'}$
in 4.3.)  Since the first three terms are written as $x + \s(x)$, they are
contained in $J$. The 4th term is contained in $J$ by a similar discussion 
as in 4.3.  It follows that 

\begin{equation*}
\tag{4.4.1}
f_2f_{2'}^{(2)}f_2f_1f_{1'} \equiv q^4f_1f_{1'}f_2f_{2'}^{(2)}f_2 + f_{1'22'}f_{122'}.
\end{equation*}
On the other hand, we have

\begin{align*}
f_2^{(2)}f_{2'}^{(4)}f_2^{(2)}f_1f_{1'}  
  &= q^3(f_{1'2'}f_{12}f_2^{(2)}f_{2'}^{(3)}f_2 
           + f_{12}f_{1'2'}f_2f_{2'}^{(3)}f_2^{(2)})  \\
  &\quad + q^4(f_1f_{1'22'}f_2f_{2'}^{(3)}f_2^{(2)}
           + f_{1'}f_{122'}f_2^{(2)}f_{2'}^{(3)}f_2)   \\
  &\quad + q^2(f_{12}f_{1'22'}f_2f_{2'}^{(3)}f_2
           + f_{12}f_{1'22'}f_{2'}^{(3)}f_2^{(2)}
           + f_{1'2'}f_{122'}f_2^{(2)}f_{2'}^{(2)}f_2)  \\
  &\quad + q^6(f_1f_{1'2'}f_2^{(2)}f_{2'}^{(3)}f_2^{(2)}
           + f_{1'}f_{12}f_2^{(2)}f_{2'}^{(4)}f_2
           + f_{1'}f_{12}f_2f_{2'}^{(4)}f_2^{(2)})  \\
  &\quad + q^8f_1f_{1'}f_2^{(2)}f_{2'}^{(4)}f_2^{(2)}  
           + f_{1'22'}f_{122'}f_2f_{2'}^{(2)}f_2.  
\end{align*}
The first two terms are written as  $x + \s(x)$, hence they are contained 
in $J$.  For the third term and the 4th term, we note the relations
\begin{align*}
\tag{4.4.2}
f_2f_{2'}^{(3)}f_2 + f_{2'}^{(3)}f_2 = f_{2'}^{(2)}f_2^{(2)}f_{2'},  \\
\tag{4.4.3}
f_2f_{2'}^{(4)}f_2^{(2)} + f_2^{(2)}f_{2'}^{(4)}f_2 
                          = f_{2'}^{(2)}f_2^{(3)}f_{2'}^{(2)}. 
\end{align*}
(4.4.2) and (4.4.3) are verified by using (2.1.2) and (2.1.3). 
Hence the third and the 4th terms are also contained in $J$.  It follows that
\begin{equation*}
\tag{4.4.4}
f_2^{(2)}f_{2'}^{(4)}f_2^{(2)}f_1f_{1'}  
    \equiv q^8f_1f_{1'}f_2^{(2)}f_{2'}^{(4)}f_2^{(2)}  
           + f_{1'22'}f_{122'}f_2f_{2'}^{(2)}f_2  \mod J.
\end{equation*}

By multiplying $f_2f_{2'}^{(2)}f_2$ on (4.4.1) from the right, we have
\begin{align*}
f_2f_{2'}^{(2)}f_2(f_1f_1')f_2f_{2'}^{(2)}f_2
&\equiv q^4f_1f_{1'}(f_2f_{2'}^{(2)}f_2)^2 + f_{1'22'}f_{122'}f_2f_{2'}^{(2)}f_2 \\
&\equiv q^4[2]^4f_1f_{1'}f_2^{(2)}f_{2'}^{(4)}f_2^{(2)} 
            + f_{1'22'}f_{122'}f_2f_{2'}^{(2)}f_2 \mod J. 
\end{align*}
The second identity follows from (2.4.2) for $a = 2$. 
Combining this formula with (4.4.4), we see that the left hand side 
of (4.2.1) is equal to 
\begin{equation*}
F_2(q^4)f_1f_{1'}f_2^{(2)}f_{2'}^{(4)}f_2^{(2)} + 
          F_1(q^4)f_{1'22'}f_{122'}f_2f_{2'}^{(2)}f_2 \equiv  0 \mod J.
\end{equation*}
Hence (4.2.1) holds.  Proposition 2.8 is proved.  

\par\bigskip

\section{ $\s$-action on root systems }

\para{5.1.}
In this section, we assume that $X$ is irreducible of affine type, 
and consider the pair $(X, \ul X)$ for $\s$ : admissible 
or non-admissible (but exclude the case ($A_n5$)). 
Let $\Fg$ be the affine Kac-Moody algebra associated to $X$.
We write $I = \{ 0, 1, \dots, n\}$, and set $I_0 = I - \{ 0\}$, where 
$I_0$ corresponds to the subalgebra $\Fg_0$ of $\Fg$ of finite type.
Recall that $Q = \bigoplus_{i \in I}\BZ \a_i$ and let $\vD \subset Q$ be the affine root 
system for $\Fg$.
Since $\s : I \to I$, $\s$ acts on $Q$ by $\a_i \mapsto \a_{\s(i)}$. 
$\vD$ is decomposed as $\vD  = \vD^{\re} \sqcup \vD^{\im}$, where 
$\vD^{\re}$ is the set of real roots, and $\vD^{\im} = (\BZ-\{0\})\d$ is the 
set of imaginary roots.
$\d$ is written as $\d = \sum_{i \in I}a_i\a_i$. From the explicit table
of coefficients $a_i$ (see [K]), we see $a_i$ is symmetric with respect to $\s$.
Thus $\d$ is stable by $\s$. On the other hand, $\vD^{\re} = W\Pi$, where 
$\Pi = \{ \a_i \mid i \in I\}$ is the set of simple roots.  
Since $\Pi$ is $\s$-stable, and $W$ is $\s$-stable, 
we see that $\vD^{\re}$ is $\s$-stable.  Hence the action of $\s$ on $Q$ leaves
$\vD$ invariant.   
Let $\vD_0$ be the subsystem of $\vD$ corresponding to $I_0$. 

\para{5.2.}
Let $\eta_0 \subset I$ be the $\s$-orbit containing 0. Set $I_0' = I - \eta_0$
and let $\vD_0'$ be the subsystem of $\vD_0$ obtained from $I_0'$.  Then 
$I_0'$ and $\vD_0'$ are $\s$-stable. 
We have $\ul I = \ul I_0' \sqcup \{\eta_0\}$, and $\ul I_0'$ corresponds to
the Cartan datum $\ul X_0$ of finite type. 
\par
For each $\b \in \vD$,  let $\SO_{\b}$ be the $\s$-orbit of $\b$ in $\vD$.
If $\SO_{\b} = \{ \b, \s(\b)\}$ with $\b + \s(\b) \in \vD^{\re}$, 
set $O(\b) = O(\s(\b)) = O(\b + \s(\b)) = 2(\b + \s(\b))$
(this only occurs in the case where $\s$ is non-admissible). 
Otherwise, 
set $O(\b) = \sum_{\g \in \SO_{\b}}\g$.
Let $Q^{\s}$ be the set of $\s$-fixed elements in $Q$, and  
$(Q^{\s})'$ the $\BZ$-submodule of $Q^{\s}$ spanned by $\wt\a_{\eta}$ 
with $\eta \in \ul I$, where 
$\wt\a_{\eta} = O(\a_i)$ for $i \in \eta$.  
Recall that $\ul Q = \bigoplus_{\eta \in \ul I}\BZ\ul\a_{\eta}$. 
The map $\wt\a_{\eta} \mapsto \ul\a_{\eta}$ gives an isomorphism 
$(Q^{\s})' \isom \ul Q$.  This map is compatible with the inner product
$(\ ,\ )$ on $Q$ and the inner product $(\ ,\ )_1$ on $\ul Q$ defined in (1.6.1). 
\par
Set $\ul\vD_0 = \{ O(\b) \mid \b \in \vD_0'\}$,  
which we consider as a subset of $\ul Q$ under the identification 
$(Q^{\s})' \simeq \ul Q$. Then it is easy to see that 
$\ul\vD_0$ gives a root system of type $\ul X_0$ with 
simple system $\{ O_{\eta} \mid \eta \in \ul I_0'\}$, where 
$O_{\eta} = O(\a_i)$ for $i \in \eta$. 

\para{5.3.}
Hereafter we assume that $X$ is irreducible and simply-laced, hence it is of
untwisted type. 
Set $\vS_0^+ = \{ \a \in \vD_0^+ \mid O(\a) \in \BZ \d \}$. 
We shall determine the set $\vS_0^+$ for each $(X, \ul X)$. 
If $\s$ leaves $\vD_0$ invariant, namely, $\eta_0 = \{ 0\}$, 
then $\vD_0 = \vD_0'$, and in this case, 
$\vS_0^+ = \emptyset$.  Thus we assume that $\vD_0' \ne \vD_0$.  In the case
where $\s$ is admissible, and $\ve \ne 4$, $\vS_0^+$ is already determined 
in [SZ2]. In the remaining cases $(X, \ul X)$ are given as follows;

\par\medskip
(A) \ $(D_{2n}^{(1)}, A_{2n -2}^{(2)}), 
            \qquad \!\!(n \ge 2, \text{ $\s$ : admissible, $\ve = 4$})$,   
\par
(B) \ $(D_{2n+1}^{(1)}, C_{n-1}^{(1)}), 
            \quad \ (n \ge 2, \text{ $\s$ : non-admissible, $\ve = 4$, 
                          $\ul X = A_1^{(1)}$ if $n = 2$})$,  
\par
(C) \ $(D_{2n+1}^{(1)}, A_{2n-1}^{(2)}),
            \quad  (n \ge 2, \text{ $\s$ : non-admissible, $\ve = 2$})$,  
\par
(D) \ $(A_{2n+1}^{(1)}, C_n^{(1)}), 
            \qquad \!(n \ge 1, \text{ $\s$ : non-admissible, $\ve = 2$, 
$\ul X = A_1^{(1)}$ if $n = 1$})$. 

\para{5.4.}
First consider the case (A), (B).  Let $X = D_m^{(1)}$, where
$m = 2n$ in the case (A), and $m = 2n+1$ in the case (B). 
Set $I = \{ 0,1, \dots, m\}$, then $\s : I \to I$ is given by  
$\s: 0 \mapsto m - 1 \mapsto 1 \mapsto m \mapsto 0$, 
$i \lra m  - i$ for $2 \le i \le m -2$. $\eta_0 = \{ 0,1,  m - 1, m\}$
and $I'_0 = \{2, \dots, m-2\}$.  $\vD_0$ is of type $D_m$, and 
$\vD_0'$ is of type $A_{m - 3}$. The highest root $\th \in \vD^+_0$ 
is given by $\th = 2\a_1 + \cdots + 2\a_{m -2} + \a_{m -1} + \a_m$. 
Write $\a = \sum_{1 \le i \le m}c_i\a_i$, and 
$z = \sum_{2 \le i \le m-2}c_i\a_i$.  We have
\begin{align*}
\tag{5.4.1}
\s(\a) &= c_1\a_m + c_{m - 1}\a_1 + c_m(\d - \th) + \s(z) \\
\tag{5.4.2}
\s^2(\a) &= c_1(\d - \th) + c_{m -1}\a_m + c_m\a_{m -1} + z,    
\end{align*}   
since $\s^2$ acts trivially on $\vD_0'$. 
\par
For $i = 1,2,4$, set $Z_i = \{ \a \in \vD_0 \mid |\SO_{\a}| = i\}$.
First assume that $\a \in Z_1$.  Then $c_1 = c_m = 0$, and 
so $\a = z \in \vD_0'$. If we write  
$I_0' = \{ \ve_2 - \ve_3, \dots, \ve_{m - 2} - \ve_{m -1}\}$, then  
\begin{equation*}
Z_1 = \begin{cases}
        \{ \ve_2 - \ve_{m -1}, \ve_3 - \ve_{m -2}, \dots, \ve_{n} - \ve_{n+1} \} 
               &\quad\text{if $m = 2n$,} \\
       \{  \ve_2 - \ve_{m -1}, \ve_3 - \ve_{m -2}, \dots, \ve_n - \ve_{n+2} \}
               &\quad\txt{if  $m = 2n+1$,}
      \end{cases}   
\end{equation*} 
Note that $Z_1 \cap \vS_0^+ = \emptyset$.
\par
Next assume that $\a \in Z_2$. 
From (5.4.2), the condition $\a \in Z_2$ is given by $c_1 = 0$ and 
$c_m = c_{m-1}$. If we write  $\vD_0 = \{ \ve_i \pm \ve_j \mid 1 \le i < j \le m\}$, 
this condition implies that 
\begin{equation*}
Z_2 = \{ \ve_i \pm \ve_j \mid 2 \le i < j \le m - 1\} - Z_1. 
\end{equation*}
The action of $\s$ on $Z_2$ is given as follows;
\begin{equation*}
\tag{5.4.3}
\begin{aligned}
\ve_i - \ve_j &\lra \ve_{m +1-j} - \ve_{m +1-i}, \\
\ve_i + \ve_j &\lra \d - (\ve_{m +1-j} + \ve_{m + 1 -i}). 
\end{aligned}
\end{equation*}
From this, we see that 

\begin{equation*}
\tag{5.4.4}
Z_2 \cap \vS_0^+ = \begin{cases}
      \{ \ve_2 + \ve_{m -1}, \ve_3 + \ve_{m -2}, \dots, \ve_{n} + \ve_{n+1} \}
          &\quad\txt{ if $m = 2n$, } \\
      \{ \ve_2 + \ve_{m -1}, \ve_3 + \ve_{m -2}, \dots, \ve_n + \ve_{n+2} \}
          &\quad\text{ if $m = 2n+1$.}
                   \end{cases}
\end{equation*}

\par
Next consider $Z_4$.
From the above discussion, we have
\begin{equation*} 
\tag{5.4.5}
Z_4 = \{ \ve_i \pm \ve_j \mid i = 1 \text{ or } j = m \}. 
\end{equation*}
We shall determine $Z_4 \cap \vD_0^+$.  
We consider the formula (5.4.1). Since $\a \in Z_4$, we have 
$c_1 = 1$ or $(c_{m}, c_{m -1})  = (1, 0), (0,1)$. 
In the expression of (5.4.1), we only consider the first part, 
$x = c_1\a_1 + c_{m -1}\a_{m -1}+ c_{m}\a_{m}$. 
Then 
\begin{equation*}
x + \s(x) + \s^2(x) + \s^3(x) = (c_1 + c_{m -1} + c_{m})(\d - \th) + z',
\end{equation*}
where $z'$ is a sum of elements in $\vD_0$. 
\par
Hence if $3\d$ appears in this sum, then $(c_1, c_{m -1}, c_{m}) = (1,1,1)$.
If $2\d$ appears, then $(c_1, c_{m -1}, c_{m}) = (1,1,0)$ or $(1,0,1)$. 
If $\d$ appears, then $(c_1, c_{m -1}, c_{m}) = (1, 0, 0), (0, 1, 0), (0, 0, 1)$.  
Write $\a = x + z$, where $x$ and $z$ are as above.  
Then we have
\begin{equation*}
O(\a) = (c_1 + c_{m -1} + c_{m})O(\a_0) + 2(z + \s(z)).
\end{equation*}
Note that $O(\a_0) = \d - 2(\ve_2 - \ve_{m -1})$.
Set $\vS^+_0(k) = \{ \a \in \vD_0^+ \mid O(\a) = k\d\}$.
The condition that $\a \in \vS_0^+(k)$ is given by 
\begin{equation*}
\tag{5.4.6}
z + \s(z) = k(\ve_2 - \ve_{m -1}) = k(\sum_{2 \le i \le m -2}\a_i). 
\end{equation*}
Since 
$z + \s(z) = \sum_{2 \le i \le m -2}(c_i + c_{m -i})\a_i$, 
(5.4.6) implies a relation 
\begin{equation*}
\tag{5.4.7}
c_i + c_{m -i} = k, \qquad  (2 \le i \le m -2). 
\end{equation*}
\par
Note that in the case where $m = 2n$, $k = 2c_n$.  
Since $1\le k \le 3$, we must have $k = 2$. 
Thus $(c_1, c_{2n-1}, c_{2n}) = (1,1,0)$ or $(1,0,1)$. 
Summing up the above discussion, we have the following.
\par\medskip
Case (A) : $(X,\ul X) = (D_{2n}^{(1)}, A_{2n-2}^{(2)})$.
\begin{align*}
\tag{5.4.8}
Z_4 \cap \vD_0^+ = \{ \ve_1 - \ve_{2n}, \ve_1 + \ve_{2n}\}.
\end{align*}

Combining (5.4.8) with (5.4.4), we have
\begin{equation*}
\tag{5.4.9}
\vS_0^+ = \{ \ve_1 - \ve_{2n}, \ve_1 + \ve_{2n}, 
             \ve_2 + \ve_{2n-1}, \ve_3 + \ve_{2n-2}, \dots, \ve_n + \ve_{n+1} \}.
\end{equation*}
Hence $\vS_0^+$ is a positive subsystem of $\vD_0^+$ of type $(n+1)A_1$.

\par\medskip
Case (B) : $(X, \ul X) = (D_{2n+1}^{(1)}, C_n^{(1)})$
\par\medskip
By using (5.4.7), one can determine $Z_4 \cap \vS_0^+(k)$ 
as follows.
\begin{align*}
\tag{5.4.10}
Z_4 \cap \vD_0^+(3) &= \{ \ve_1 + \ve_{n+1} \}, \\
Z_4 \cap \vD_0^+(2) &= \{ \ve_1 - \ve_{2n+1}, \ve_1 + \ve_{2n+1}\}, \\
Z_4 \cap \vD_0^+(1) &= \{ \ve_1 - \ve_{n+1}, \ve_{n+1} - \ve_{2n+1}, 
                           \ve_{n+1} + \ve_{2n+1} \}.
\end{align*}
It follows that 
\begin{equation*}
\tag{5.4.11}
\begin{split}
\vS_0^+ &= \{ \ve_2 + \ve_{2n}, \ve_3 + \ve_{2n-1}, \dots, \ve_n + \ve_{n+2} \} \\
        &\quad  \sqcup
        \{ \ve_1 - \ve_{n+1}, \ve_{n+1} - \ve_{2n+1},  \ve_{n+1} + \ve_{2n+1}, 
                \ve_1 - \ve_{2n+1}, \ve_1 + \ve_{2n+1}, \ve_1 + \ve_{n+1} \}.
 \end{split}
\end{equation*}
Hence $\vD_0^+$ is a positive subsystem of $\vD_0^+$ of type $(n-1)A_1 + A_3$. 

\para{5.5.}
Next consider the case (C) : $(X, \ul X) = (D_{2n+1}^{(1)}, A_{2n-1}^{(2)})$. 
$\vD_0$ : type $D_{2n+1}$, where 
$\s : \a_0 \lra \a_{2n+1}, \a_i \lra \a_{2n + 1-i}$ for $i = 2, \dots, 2n-1$.   
In this case, $\eta_0 = \{ 0, 2n+1\}$, $I_0' = \{ 1,2, \dots, 2n\}$, 
and $\vD_0'$ is of type $A_{2n}$.
Here $\th = \a_1 + 2\a_2 + \cdots + 2\a_{2n-1} + \a_{2n} + \a_{2n+1}$. 
We write $\a \in \vD_0^+$ as 
$\a = \sum_{i \in I_0}c_i\a_i$, with $\eta_0 = \{ 0, i_1\}$.  
By the discussion in [SZ2, (5.3.1)], we see that 
$\vS_0^+ = \{ \a \in \vD_0^+ \mid O(\a) = \d\}$, and the condition 
$\a \in \vS_0^+$ is given by 
\begin{equation*}
\tag{5.5.1}
c_i + c_{\s(i)} = a_i,  \qquad (i \in I_0')
\end{equation*}
where we write $\d = \sum_{i \in I}a_i\a_i$. 
Here $\ul\vD_0$ is of type $C_n$ with $\a_{\ul n}$ long root. 
We have $c_{2n+1} = 1$ and 
\begin{equation*}
c_i + c_{\s(i)} = 2, \quad (2 \le i \le 2n-1), \qquad c_1 + c_{2n} = 1.
\end{equation*}
This implies that
\begin{equation*}
\tag{5.5.2}
\vS_0^+ = \{ \ve_1 + \ve_{2n+1}, \ve_2 + \ve_{2n}, \dots, \ve_n + \ve_{n+2} \}.
\end{equation*}
\par
Hence $\vS_0^+$ is a positive subsystem of $\vD^+_0$ of type $nA_1$. 

\para{5.6.}
Finally, consider the case (D) : $(X, \ul X) = (A_{2n+1}^{(1)}, C_n^{(1)})$. 
Here $I = \{ 0, 1, \dots, 2n+1\}$ with $\s : i \lra 2n+1-i$. 
Hence $\ul I = \{ \ul 0, \ul 1, \dots, \ul n\}$. 
$I_0' = \{ 1, 2, \dots, 2n\}$ and $\vD_0'$ is of type $A_{2n}$ on which 
$\s$ acts. $\ul I_0' = \{ \ul 1, \ul 2, \dots, \ul n\}$, and $\ul\vD_0'$ 
is of type $C_n$ with $\a_{\ul n}$ long root. 
Take $\a \in \vD_0^+$ and write it as $\a = \sum_{i \in I_0}c_i\a_i$. 
Here $\th = \a_1 + \cdots + \a_{2n+1}$, and $\s(\a_{2n+1}) = \d - \th$.
Assume that $\a \in \vS_0^+$. 
We have 
\begin{align*}
\a + \s(\a) = c_{2n+1}(\d - \th + \a_{2n+1}) + 
      \sum_{1 \le i \le n}(c_i + c_{2n+1-i})(\a_i + \a_{2n+1-i})
\end{align*}
Hence $c_{2n+1} = 1$ and the condition $\a \in \vS_0^+$ is given by 
\begin{equation*}
c_i + c_{2n+1-i} = 1, \quad (1 \le i \le n). 
\end{equation*}
This implies that $\a = \a_{n+1} + \cdots + \a_{2n+1} = \ve_{n+1} - \ve_{2n+2}$. 
If we write $\vD_0^+ = \{ \ve_i - \ve_j \mid 1 \le i < j \le 2n+2\}$, 
we have 

\begin{equation*}
\tag{5.6.1}
\vS_0^+ = \{ \ve_{n+1} - \ve_{2n+2} \}.
\end{equation*}
\par
Hence $\vS_0^+$ is a positive subsystem of $\vD_0^+$ of type $A_1$.

\para{5.7.}
We shall describe the set of positive real roots for $\vD$ appearing 
as $\ul \vD$ under the isomorphism $(Q^{\s})' \simeq \ul Q$ in 5.1. 
Let $\vD$ be the irreducible root system 
of type $X_n^{(r)}$ with vertex set $I = \{ 0,1, \dots, n\}$, where 
$I_0 = \{ 1, \dots, n\}$ corresponds to the finite system $I_0$.  In the case
where $X_n^{(r)} \ne A_{2n}^{(2)}$, the choice of 0 is determined uniquely, 
up to the diagram automorphisms, in which case, we follow the ordering of [K].
While in the case where $X_n^{(r)} = A_{2n}^{(2)}$, we use the ordering 
as in ($A_a1$) in 1.5, which is the ordering used in [BN], and is opposite to the 
one in [K]. We denote by $\vD_{0,l}^+$ the set of positive long roots in $\vD_0$, 
and $\vD_{0,s}^+$ the set of positive short roots in $\vD_0$. 
Let $\vD^{\re,+}$ be the set of positive real roots in $\vD$.  
\par
Assume that $X_n^{(r)} \ne A_{2n}^{(2)}$.  In this case, $\vD^{\re,+}$ is 
written as $\vD^{\re,+} = \vD^{\re,+}_{>} \sqcup \vD^{\re,+}_{<}$, where
\begin{align*}
\tag{5.7.1}
\vD^{\re,+}_{>} &= \{ \a + m\d \mid \a \in \vD_{0,s}^+, m \in \BZ_{\ge 0} \}
          \sqcup \{ \a + mr\d \mid \a \in \vD_{0,l}^+, m \in \BZ_{\ge 0} \}, \\
\vD^{\re,+}_{<} &= \{ \a + m\d \mid \a \in -\vD_{0,s}^+, m \in \BZ_{> 0} \}
          \sqcup \{ \a + mr\d \mid \a \in -\vD_{0,l}^+, m \in \BZ_{>  0} \}. \\
\end{align*} 

\par
 Next assume that $X_n^{(r)} = A_{2n}^{(2)}$. In this case, $\vD^{\re,+}$ 
is given as follows. Note that since our ordering is opposite to [K], 
this description of $\vD^{\re,+}$ is different from the one in [K]. 
We fix the inner product $(\ ,\ )$ so that $(\a, \a) = 1,2,4$.  
We give the ordering of $A_{2n}^{(2)}$ as in 1.5, namely 
$\a_0$ is such that $(\a_0, \a_0) = 4$. 
The finite root system $\vD_0$ corresponding to $I_0$ is of 
type $B_n$. 
Then $\vD^{\re,+}$ is given as 
$\vD^{\re,+} = \vD^{+}_s \sqcup \vD^{+}_l \sqcup \vD^{+}_{2s}$, 
where
\begin{align*}
\tag{5.7.2}
\vD^{+}_s &= \{ m\d + \a \mid \a \in \vD^+_{0,s}, m \in \BZ_{\ge 0}\}
           \sqcup \{ m\d -\a \mid \a \in \vD^+_{0,s}, m \in \BZ_{> 0} \}, \\ 
\vD^+_l &= \{ m\d + \a \mid \a \in \vD^+_{0,l}, m  \in \BZ_{\ge 0}\} 
           \sqcup \{ m\d -\a \mid \a \in \vD^+_{0,l}, m \in \BZ_{> 0} \}, \\
\vD^+_{2s} &= \{ m\d + 2\a \mid \a \in \vD_{0,s}, m \text{ : odd integer $\ge 0$, } \}, 
\end{align*}
where $\vD_{0,s}$ is the set of short roots in $\vD_0$.
Note that 
\begin{align*}
\vD^+_s &= \{ \a \in \vD^+ \mid (\a,\a) = 1\}, \\
\vD^+_l &= \{ \a \in \vD^+ \mid (\a,\a) = 2\}, \\
\vD^+_{2s} &= \{ \a \in \vD^+ \mid (\a,\a) = 4\}. 
\end{align*}

\para{5.8.}
Let $\ul\vD_{0,l}$ (resp. $\ul\vD_{0,s}$) be the set of long roots 
(resp. short roots) in $\ul\vD_0$.  We denote by 
$\ul\vD_{0,l}^+, \ul\vD_{0,s}^+$ the corresponding positive roots.
Set
\begin{equation*}
\tag{5.8.1}
\begin{aligned}
\vS_s^+ &= \{ \a \in \vD_0^+ \mid O(\a) \in \ul\vD_{0,s}^+ \}, \\
\vS_l^+ &= \{ \a \in \vD_0^+ \mid O(\a) \in \ul\vD_{0,l}^+ \}, \\
\vS_{l'} &= \{ \a \in \vD_0^+ \mid O(\a) \in \d + \ul\vD_{0,l} \}, \\
\vS_{l''} &= \{ \a \in \vD_0^+ \mid O(\a) \in 2\d + \ul\vD_{0,l} \}, \\
\vS_{l'''} &= \{ \a \in \vD_0^+ \mid O(\a) \in 3\d + \ul\vD_{0,l} \}, \\
\vS_{s'} &= \{ \a \in \vD_0^+ \mid O(\a) \in \d + \ul\vD_{0,s} \}, \\
\vS_{2s'} &= \{ \a \in \vD_0^+ \mid O(\a) \in \d + 2\ul\vD_{0,s} \}, \\
\vS_{2s''} &= \{ \a \in \vD_0^+ \mid O(\a) \in 3\d + 2\ul\vD_{0,s} \}, \\
\vS_0^+    &= \{ \a \in \vD_0^+ \mid O(\a) \in \BZ_{> 0}\d \}. 
\end{aligned}
\end{equation*}  
$\vS_0^+$ is already defined, and was determined in 5.3 $\sim$ 5.6.
Note that $\vS_{2s'}, \vS_{2s''}$ occurs only in the case where 
$(X, \ul X) = (D_{2n}^{(1)}, A_{2n-2}^{(2)})$. 
We define $\vS_{l'}^+, \vS_{l''}^+, \vS_{l'''}^+$ 
(resp. $\vS_{s'}^+, \vS_{2s'}^+, \vS_{2s''}^+$) 
by replacing 
$\ul\vD_{0,l}$ by $\ul\vD_{0,l}^+$ (resp. by replacing $\ul\vD_{0,s}$ 
by $\ul\vD_{0,s}^+$) in the corresponding formulas. 

\para{5.9.}
Let $\th$ be the highest root in $\vD^+_0$.  Then we have 
$\a_0 = \d - \th$. 
Let $\ul\th$ be the highest root in $\ul\vD_0^+$, and 
$\ul\th_s$ the highest short root in $\ul\vD_0^+$.  The following relations 
are verified by a direct computation.

\begin{equation*}
\tag{5.9.1}
O(\a_0) = \begin{cases}
             \d - 2\ul\th_s  &\quad\text{ Case (A), } \\
             \d - \ul\th     &\quad\text{ Case (B), } \\
             \d - \ul\th_s   &\quad\text{ Case (C), } \\
            2\d - \ul\th     &\quad\text{ Case (D). }
          \end{cases}   
\end{equation*}
This implies that

\begin{equation*}
\tag{5.9.2}
O(\th) = \begin{cases}
           3\d + 2\ul\th_s  &\quad\text{ Case (A), } \\
           3\d + \ul\th     &\quad\text{ Case (B), } \\
           \d  + \ul\th_s   &\quad\text{ Case (C), } \\
           2\d + \ul\th     &\quad\text{ Case (D). } 
         \end{cases}
\end{equation*}
Note that $O(\th) = 2(\th + \s(\th))$ in Case (D). 
\par
Let $W$ be the Wyl group of $\vD$, and $\ul W$ the Weyl group of $\ul\vD$.
As in 3.2, the correspondence $s_{\eta} \mapsto t_{\eta}$ gives an isomorphism
$\ul W \isom W^{\s}$.
Let $\ul W_0$ be the Weyl group associated to $\ul \vD_0$.
Then $\ul W_0$ is regarded as a subgroup of $\ul W$, and  
under the identification $\ul W \simeq W^{\s}$, we regard $\ul W_0$ as a subgroup 
of $W$.  
\par
We prove the following lemma.

\begin{lem} 
Under the notation in (5.8.1), we have
\begin{equation*}
\vD_0^+ = \begin{cases}
       \vS_s^+ \sqcup \vS_l^+ \sqcup \vS_{l'} 
              \sqcup \vS_{2s'} \sqcup \vS_{2s''} \sqcup \vS_0^+
                  &\quad\text{Case {\rm  (A)}, }  \\ 
       \vS_s^+ \sqcup \vS_l^+ \sqcup \vS_{s'}
              \sqcup \vS_{l'} \sqcup \vS_{l''} \sqcup \vS_{l'''} \sqcup \vS_0^+
                  &\quad\text{Case {\rm (B)}, } \\
      \vS_s^+ \sqcup \vS_l^+ 
             \sqcup \vS_{s'} \sqcup \vS_{l''} \sqcup \vS_0^+
                  &\quad\text{Case {\rm (C)}, }   \\
      \vS_s^+ \sqcup \vS_l^+
            \sqcup \vS_{l''} \sqcup \vS_0^+
                  &\quad\text{Case {\rm (D)}.}       
     \end{cases}
\end{equation*}
\end{lem}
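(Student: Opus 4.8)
The plan is to prove the two halves of the asserted disjoint-union decomposition separately: disjointness, which is essentially free, and exhaustiveness, which carries all the content. For disjointness I note that $O$ is a well-defined function $\vD_0^+ \to \ul Q$, so each $\a$ lies in at most one family as soon as the nine target sets in (5.8.1) are pairwise disjoint in $\ul Q$. They are: the coefficient of $\d$ separates $\vS_s^+,\vS_l^+$ (level $0$) from $\vS_{s'},\vS_{l'},\vS_{2s'}$ (level $1$), from $\vS_{l''}$ (level $2$), from $\vS_{l'''},\vS_{2s''}$ (level $3$), and from $\vS_0^+$ (vanishing finite part); within a fixed $\d$-level the finite part is a short root, a long root, or twice a short root of $\ul\vD_0$, and these three are mutually distinct in $\ul Q$. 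Hence the listed families are automatically disjoint, and only exhaustiveness remains.

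For exhaustiveness I would treat the four cases (A)--(D) one at a time, using the explicit $\ve$-coordinate model of $\vD_0$ together with the action of $\s$ recorded in (5.4.1)--(5.4.3) for the $D_m^{(1)}$ cases and its analogue for $A_{2n+1}^{(1)}$ in 5.6. The organizing device is the stratification by orbit size, $\vD_0^+ = (Z_1\cap\vD_0^+)\sqcup(Z_2\cap\vD_0^+)\sqcup(Z_4\cap\vD_0^+)$ from 5.4, where only $Z_1,Z_2$ occur in the $\ve=2$ cases (C),(D). On each stratum $O(\a)$ is evaluated directly: $O(\a)=\a$ for $\a\in Z_1$; $O(\a)=\a+\s(\a)$ for $\a\in Z_2$, except that the doubling convention of 5.2 gives $O(\a)=2(\a+\s(\a))$ precisely when $\s$ is non-admissible and $\a+\s(\a)$ is again a real root; and $O(\a)=\a+\s(\a)+\s^2(\a)+\s^3(\a)$ for $\a\in Z_4$. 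Reducing each expression modulo $\BZ\d$ exhibits the finite part and the residual multiple of $\d$, and the anchors (5.9.1)--(5.9.2) for $O(\a_0)$ and $O(\th)$ calibrate whether that finite part is short, long, or twice short. The roots with purely imaginary image form $\vS_0^+$, already determined in (5.4.9), (5.4.11), (5.5.2) and (5.6.1), so only the complementary roots need to be sorted into the nonzero-finite-part families.

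The main obstacle is the length-and-level bookkeeping, which behaves differently across the cases and must match the claimed list exactly. Two points require care. First, in the $\ve=4$ cases (A),(B) a single size-$4$ orbit can straddle several $\d$-levels, as the analysis of $Z_4$ in (5.4.5)--(5.4.10) already shows; one must verify that the resulting images fill $\vS_{2s'},\vS_{2s''}$ in Case (A) and $\vS_{l'},\vS_{l''},\vS_{l'''}$ in Case (B), with no stray image landing outside the list. Second, in the non-admissible cases (B)--(D) the doubling convention must be applied to exactly the size-$2$ orbits whose pairwise sum is real, and one checks it produces the long-root layers at the higher $\d$-levels; the prototype is $O(\th)=2(\th+\s(\th))=2\d+\ul\th\in\vS_{l''}$ in Case (D), noted right after (5.9.2). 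Once each stratum has been shown to inject into one of the listed families, disjointness closes the argument; as a consistency check I would confirm that the number of positive roots in each orbit class matches the number of target real roots at the corresponding $\d$-level read off from the description of $\ul\vD^{\re,+}$ in 5.7.
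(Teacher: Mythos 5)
Your disjointness argument is fine (the $\d$-level together with the short/long/twice-short type of the finite part separates the nine target conditions of (5.8.1), since in $\ul\vD_0$ of type $B$ or $C$ no long root is twice a short root), and your stratification of $\vD_0^+$ by orbit size $Z_1, Z_2, Z_4$, with the doubling convention applied to size-$2$ orbits whose sum is real, is exactly the frame the paper uses for the easy part of the lemma (5.4--5.6, (5.10.2), (5.10.4)). The gap is in how you certify exhaustiveness, which is where all the content lies. You claim that the anchors (5.9.1)--(5.9.2) ``calibrate whether that finite part is short, long, or twice short,'' but those formulas only give $O(\a_0)$ and $O(\th)$, two individual roots; they say nothing about the finite part of $O(\a)$ for a general $\a$ with $|\SO_{\a}| = 4$. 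The missing idea that makes the anchors usable is the equivariance of the orbit-sum map: $O(w(\b)) = \ul w\,(O(\b))$ for $w \in W^{\s}$ corresponding to $\ul w \in \ul W_0$ under $\ul W \isom W^{\s}$, so that the entire $\ul W_0$-orbit of $\s(\a_0)$, of $\th$, and (in case (B)) of the extra anchor $\ve_2 + \ve_{n+1}$ consists of roots whose images have the known shapes $k\d + \ul\vD_{0,l}$, $k\d + 2\ul\vD_{0,s}$, etc. This is the structural input behind the paper's sentence ``since any short root in $\ul\vD_0$ is $\ul W_0$-conjugate to $\ul\th_s$\,\dots'', and it appears nowhere in your proposal.

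Even with equivariance granted, one only gets inclusions of the form $E^+ \subset \vS_{l'} \sqcup \vS_{l''} \sqcup \vS_{l'''} \subset \vD_0^+ - Y - \vS_{s'} - \vS_0^+$; exhaustiveness -- that no $O(\a)$ lands in a shape absent from the list, such as $2\d + \ul\vD_{0,s}$ or $\d + 2\ul\vD_{0,s}$ in case (B) -- is precisely what remains. The paper closes this by a cardinality comparison, and this is the engine of its proof, not a consistency check: it lower-bounds $|E^+|$ by counting, for each $\ul\b$ in $\ul\vD_{0,l}$ or $\ul\vD_{0,s}$, how many $\a \in E^+$ satisfy $O(\a) = k\d + \ul\b$ (e.g.\ $4|\ul\vD_{0,s}| \le |E^+|$ in case (A), $5|\ul\vD_{0,l}| \le |E^+|$ in case (B)), computes $|\vD_0^+ - Y - \cdots - \vS_0^+|$ explicitly in the $\ve_i \pm \ve_j$ coordinates, and concludes from the equality of the two numbers that all inclusions are equalities. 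In your plan the count is demoted to an after-the-fact check, while the actual sorting of every root is deferred as ``length-and-level bookkeeping''; so as written, the decisive step is neither carried out directly nor replaced by the counting argument. To repair it, either promote the count to the main argument as the paper does, or genuinely perform the full coordinate computation of $O(\a)$ on each stratum (feasible in principle, but it must be done for all of $Z_4$ in the $\ve = 4$ cases and must explicitly exclude the unlisted shapes).
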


\begin{proof}
Set
\begin{equation*}
Y = \{ \a \in \vD_0^+ \mid O(\a) \in \ul\vD_0^+\} = \vS_s^+ \sqcup \vS_l^+.
\end{equation*}    

We prove the lemma in a similar way as in [SZ2, Lemma 5.5], separately 
for each case.  
\par\medskip
Case (A). \ 
Let $E$ be the union of $\ul W_0$-orbits of 
$\s(\a_0)$ and of $\th$, and set $E^+ = E \cap \vD_0^+$. 
We know by (5.9.1) and (5.9.2) that $O(\a_0) = \d - 2\ul\th_s$ and
$O(\th) = 3\d + 2\ul\th_s$. Since any short root in $\ul\vD_0$ is $\ul W_0$-conjugate
to $\ul\th_s$, we see that for any $\a \in E^+$, $O(\a)$ can be written as 
$\d + 2\ul\b$ or $3\d + 2\ul\b$, where $\ul\b \in \ul\vD_{0,s}$.  
Hence $\a \in \vS_{2s'} \sqcup \vS_{2s''}$. 
It follows that 
\begin{equation*}
E^+ \subset \vS_{2s'} \sqcup \vS_{2s''} \subset \vD_0^+ - Y - \vS_{l'} - \vS_0^+.
\end{equation*}
We show that these inclusion relations are all equalities. 
For any $\ul\b \in \ul\vD_{0, s}$, there exists at least three  
$\a \in E^+$ such that $O(\a) = \d + 2\ul\b$, and at least one $\a \in E^+$
such that $O(\a) = 3\d + 2\ul\b$.   It follows that 
$4|\ul\vD_{0,s}| \le |E^+|$.  
On the other hand, under the notation of 5.4, we see that 
$\vD_0^+ = \{ \ve_i \pm \ve_j \mid 1 \le i < j \le 2n\}$, and
\begin{equation*}
\vS_0^+ - Y  - \vS_{l'} = \{ \ve_1 \pm \ve_j \mid 2 \le j \le 2n\}
               \cup \{ \ve_i \pm \ve_{2n} \mid 2 \le i \le 2n-1\}.
\end{equation*}
By (5.4.7), $(\vS_0^+ - Y - \vS_{l'}) \cap \vS_0^+ = \{ \ve_1 \pm \ve_{2n}\}$.
Hence $|\vS_0^+ - Y - \vS_{l'} - \vS_0^+| = 8n-8$.  As $\ul\vD_0$ is of type 
$B_{n-1}$, we have$|\ul\vD_{0, s}| = 2(n-1)$,  hence
$|E^+| = |\vD_0^+ - Y - \vS_{l'} - \vS_0^+|$.  It follows that 

\begin{equation*}
\tag{5.10.1}
E^+ = \vS_{2s'} \sqcup \vS_{2s''} = \vD_0^+ - Y - \vS_{l'} - \vS_0^+. 
\end{equation*} 
The required formula follows from this.
\par\medskip
Case (B) \ 
First note that, under the notation in 5.4, for $\a = \ve_i + \ve_j \in Z_2$, 
we have $\a + \s(\a) = \d + (\ve_i + \ve_j - \ve_{2n+2-i} - \ve_{2n+2-j})$. 
This comes from (5.4.3). 
The condition that $\a + \s(\a) \in \vD^{\re}$ is given by $2n +2-i = i$, namely, 
$i = n+1$. Hence we have  
\begin{equation*}
\tag{5.10.2}
\{ \a \in Z_2 \mid O(\a) \in 2\d + \ul\vD_{0,l}\} 
    = \{ \ve_i + \ve_{n+1} \mid 2 \le i \le 2n, i \ne n+1\}.
\end{equation*} 
Let $E$ be the union of $\ul W_0$-orbits of $\b_1, \b_2, \b_3 \in \vD_0$, 
where $O(\b_k) \in k\d + \ul\vD_{0,l}$ for $k = 1,2,3$, and set $E^+ = E \cap \vD_0^+$. 
By (5.9.1) and (5.9.2), we have $O(\a_0) = \d - \ul\th, O(\th) = 3\d + \ul\th$, 
we may choose $\b_1 = \s(\a_0), \b_3 = \th$. 
It follows from (5.10.2), we may choose, for example, $\b_2 = \ve_2 + \ve_{n+1}$. 
Then we have
\begin{equation*}
E^+ \subset \vS_{l'}\sqcup \vS_{l''} \sqcup \vS_{l'''}  \subset 
\vD_0^+ - Y - \vS_{s'} - \vS_0^+.
\end{equation*}
We show that these inclusion relations are actually equalities. 
For each $\ul \b \in \ul\vD_{0,l}$, there exist, at least three $\a \in E^+$
such that $O(\a) = \d + \ul\b$, at least one $\a \in E^+$ such that
$O(\a) = 2\d + \ul\b$, and at least one $\a \in E^+$ such that $O(\a) = 3\d + \ul\b$
(note that $\b_2 \in Z_2$). It follows that $5|\ul\vD_{0,l}| \le |E^+|$. 
On the other hand, by the computation for $Z_2, Z_4$ in 5.4, and by (5.10.2), 
\begin{align*}
\vD_0^+ - Y - \vS_{s'}
  &= \{ \ve_1 \pm \ve_j \mid 2 \le j \le 2n + 1 \}  \\ 
    &\quad\sqcup
    \{ \ve_i \pm \ve_{2n+1} \mid 2 \le i \le 2n \} \\
    &\quad\sqcup
     \{ \ve_i + \ve_{n+1} \mid 2 \le i \le 2n, i \ne n+1 \}. 
\end{align*}
The first two sets are contained in $Z_4$ and the third one is contained in $Z_2$
(see (5.4.5) and (5.10.2)). 
By (5.4.11), we have 
\begin{equation*}
(\vD_0^+ - Y - \vS_{s'}) \cap \vS_0^+ 
      = \{ \ve_1 \pm  \ve_{2n+1}, \ve_1 \pm \ve_{n+1}, \ve_{n+1} \pm \ve_{2n+1}  \}. 
\end{equation*}
It follows that 
\begin{equation*}
|\vD_0^+ - Y - \vS_{s'} - \vS_0^+| = 10(n-1).
\end{equation*}
Since $\ul\vD_0$ is of type $C_{n-1}$, we have $|\ul\vD_{0,l}| = 2(n-1)$, 
and so $|\vD_0^+ - Y - \vS_{s'} - \vS_0^+| = |E^+|$.  Hence

\begin{equation*}
\tag{5.10.3}
E^+ = \vS_{l'}\sqcup \vS_{l''} \sqcup \vS_{l'''}  =  
\vD_0^+ - Y - \vS_{s'} - \vS_0^+.
\end{equation*}

The required formula follows from this. 

\par\medskip
Case (C)\ 
In this case, $\vD_0^+ = \{ \ve_i \pm \ve_j \mid 1 \le i < j \le 2n+1\}$.
The action of $\s$ on $\a \in \vD_0^+$ is given as in (5.4.3).
It follows that 

\begin{equation*}
\tag{5.10.4}
\{ \a \in \vD_0^+ \mid O(\a) \in 2\d + \ul\vD_{0,l}\}
       = \{ \ve_i + \ve_{n+1} \mid 1 \le i \le 2n+1, i \ne n+1\}.
\end{equation*}

Let $E$ be the union of $\ul W_0$-orbit of $\b_1, \b_2 \in \vD_0^+$
such that $O(\b_1) \in \d + \ul\vS_{0,s}$, $\b_2 \in 2\d + \ul\vD_{0,l}$, 
and set $E^+ = E \cap \vD^+$. 
By (5.9.2), we may choose $\b_1 = \th$.  By (5.10.4), we may choose, for example, 
$\b_2 = \ve_1 + \ve_{n+1}$.  
Then we have

\begin{equation*}
E^+ \subset \vS_{s'} \sqcup \vS_{l''} \subset \vD_0^+ - Y - \vS_0^+.
\end{equation*}
We show that these inclusion relations are actually equalities.
For each $\ul\b \in \ul\vD_{0,s}$, there exists at least one $\a \in E^+$ 
such that $O(\a) \in \d + \ul\b$, and for each $\ul\b' \in \ul\vD_{0,l}$, 
there exists at least one $\a \in E^+$ such that $O(\a) \in 2\d + \ul\b'$. 
It follows that $|E^+| \ge |\ul\vD_0|$. 
On the other hand, we have
\begin{equation*}
\vD_0^+ - Y = \{ \ve_i + \ve_j \mid 1 \le i < j \le 2n+1\}. 
\end{equation*}
By (5.5.2), $\vS_0^+ \subset \vD_0^+ - Y$, and $|\vS_0^+| = n$. 
It follows that $|\vD_0^+ - Y - \vS_0^+| = 2n^2$. 
Since $\ul\vD_0$ is of type $C_n$, $|\ul\vD_0| = 2n^2$.  
Thus we have 
\begin{equation*}
\tag{5.10.5}
E^+ = \vS_{s'} \sqcup \vS_{l''} = \vD_0^+ - Y  - \vS_0^+.
\end{equation*} 
The required formula follows from this. 

\par\medskip
Case (D) \ 
Let $E$ be the $\ul W_0$-orbit of $\th$, and set 
$E^+ = E \cap \vD_0^+$.  By (5.9.2), $O(\th) = 2\d + \ul\th$.
Then we have
\begin{equation*}
E^+ \subset \vS_{l''} \subset \vD_0^+ - Y - \vS_0^+.
\end{equation*}
We show that these inclusions are actually equalities. 
We have $|E^+| \ge |\vD_{0,l}|$. On the other hand, 
\begin{equation*}
\vD_0^+ - Y = \{ \ve_i - \ve_{2n+2}\}.
\end{equation*}
Since $\vS_0^+ = \{ \ve_{n+1} - \ve_{2n+2}\}$, we see that
$|\vD_0^+ - Y - \vS_0^+| = 2n$. 
Since $\ul\vD_0$ is of type $C_n$, we have $|\ul\vD_{0,l}| = 2n$.
Thus we have
\begin{equation*}
E^+ = \vS_{l''} = \vD_0^+ - Y - \vS_0^+.
\end{equation*}
The required formula follows from this.
\par
The lemma is proved. 
\end{proof}

\para{5.11.}
For each $m \ge 0$, $x \in \{ s, 2s', 2s'', l, l', l'',l'''\}$, 
we define subsets $\vS_x(m), \vS_x'(m)$ of $\vD^{\re,+}$ as
\begin{equation*}
\tag{5.11.1}
\begin{aligned}
\vS_x(m) &= \bigcup_i\s^i\{ m\d + \a \mid \a \in \vS_x^+\}, \\
\vS_x'(m) &= \bigcup_i\s^i\{ m\d - \a \mid \a \in \vS_x^+\},
\end{aligned}
\end{equation*}
where we assume that $m \ge 1$ for $\vS_x'(m)$. 
We define an equivalence relation on $\vD^{\re}$ 
by the condition that $\b, \b' \in \vD^{\re}$ are equivalent 
if $O(\b) = O(\b')$. 
We denote by $\wh\vS_x(m)$ the set of equivalence classes in $\vS_x(m)$, 
and define $\wh\vS_x'(m)$ similarly.  
Let $\wh \vD^{\re,+}$ be the set of equivalence classes in 
$\vD^{\re,+} - \bigcup_{m \ge 0}\{ m\d \pm \a \mid \a \in \vS_0^+\}$.
For $x = 0$ and $m \ge 0$, we set

\begin{equation*}
\tag{5.11.2}
\vS_0(m) = \{ m\d + \a \mid \a \in \vS_0^+\}, \qquad
\vS'_0(m) = \{ m\d - \a \mid \a \in \vS_0^+\}.
\end{equation*}
\par
Also for each $m \ge 0$, we define subsets of $\ul\vD^{\re,+}$ by 

\begin{equation*}
\tag{5.11.3}
\begin{aligned} 
\ul\vS_s(m) &= \{ m\d + \a \mid \a  \in \ul\vD_{0,s}^+\}, &\quad
               \ul\vS_s'(m) &= \{ m\d - \a \mid \a  \in \ul\vD_{0,s}^+\},      \\       
\ul\vS_{2s}(m) &= \{ m\d + 2\a \mid \a  \in \ul\vD_{0,s}^+\}, &\quad
               \ul\vS_{2s}'(m) &= \{ m\d - 2\a \mid \a  \in \ul\vD_{0,s}^+\},      \\       
\ul\vS_l(m) &= \{ m\d + \a \mid \a  \in \ul\vD_{0,l}^+\}, &\quad
               \ul\vS_l'(m) &= \{ m\d - \a \mid \a  \in \ul\vD_{0,l}^+\}.      \\       
\end{aligned}
\end{equation*}
Note that we assume $m \ge 1$ in the right hand sides of (5.11.2) and (5.11.3).  

The following result is a generalization of [SZ2, Prop. 5.7].
The proof is straightforward from Lemma 5.10, by using the description of 
the root system $\ul\vD^{\re,+}$ given in 5.7.

\begin{prop}  
Under the identification $(Q^{\s})' \simeq \ul Q$, the followings hold.
\begin{enumerate}
\item \ For each $\b \in \vD^+$, $O(\b) \in \ul\vD^+$.  
\item \ In the cases {\rm (A), (B), (C),} 
we have a partition 
\begin{equation*}
\wh\vD^{\re,+} = \bigsqcup_{x, m\ge 0}\wh\vS_x(m)
                    \sqcup \bigsqcup_{x, m \ge 1}\wh\vS_x'(m).
\end{equation*}
The map $f : \b \mapsto O(\b)$ induces a bijection 
$\wh\vD^{\re,+} \isom \ul\vD^{\re,+}$.
The bijection is given more precisely as follows. 
(In the formulas below, we only give the part corresponding to $\wh\vS_x(m)$.
The part corresponding to $\wh\vS_x'(m)$ is described similarly.). 
\par\bigskip\noindent
Case {\rm (A)} 
\begin{equation*}
\begin{aligned}
\wh \vS_s(m) &\isom \ul\vS_s(m), \\
\wh\vS_l(m)  &\isom \ul\vS_l(m), &\qquad &(m \equiv 0 \mod 2),\\
\wh\vS_{l'}(m)  &\isom \ul\vS_l(m), &\qquad &(m \equiv 1 \mod 2),\\
\wh\vS_{2s'}(m) &\isom \ul\vS_{2s}(m), &\qquad &(m \equiv 1 \mod 4), \\
\wh\vS_{2s''}(m) &\isom \ul\vS_{2s}(m), &\qquad &(m \equiv 3 \mod 4).   
\end{aligned}
\end{equation*}
Case {\rm (B)}
\begin{equation*}
\begin{aligned}
\wh \vS_s(m) &\isom \ul\vS_s(m), &\qquad &(m \equiv 0 \mod 2), \\
\wh \vS_{s'}(m) &\isom \ul\vS_s(m), &\qquad &(m \equiv 1 \mod 2), \\
\wh\vS_l(m)  &\isom \ul\vS_l(m), &\qquad &(m \equiv 0 \mod 4), \\
\wh\vS_{l'}(m)  &\isom \ul\vS_l(m), &\qquad &(m \equiv 1 \mod 4), \\
\wh\vS_{l''}(m)  &\isom \ul\vS_l(m), &\qquad &(m \equiv 2 \mod 4), \\
\wh\vS_{l'''}(m)  &\isom \ul\vS_l(m), &\qquad &(m \equiv 3 \mod 4). \\
\end{aligned}
\end{equation*}
Case {\rm (C)}
\begin{equation*}
\begin{aligned}
\wh\vS_s(m) &\isom \ul\vS_s(m), &\qquad &(m \equiv 0 \mod 2), \\ 
\wh\vS_{s'}(m) &\isom \ul\vS_s(m), &\qquad &(m \equiv 1 \mod 2), \\ 
\wh\vS_{l}(m) &\isom \ul\vS_l(m), &\qquad &(m \equiv 0 \mod 4), \\ 
\wh\vS_{l''}(m) &\isom \ul\vS_l(m), &\qquad &(m \equiv 2 \mod 4). \\ 
\end{aligned}
\end{equation*}
\item 
In the case {\rm (D)}, 
we have a partition 
\begin{equation*}
\wh\vD^{\re,+} = \bigsqcup_{x, m\ge 0}\wh\vS_x(2m)
                    \sqcup \bigsqcup_{x, m \ge 0}\wh\vS_x'(2m+1).
\end{equation*}
For $\b \in \vS_x(2m)$ (resp. $\b \in \vS_x(2m +1)$), 
$O(\b)$ is written as $O(\b) = 2m\d + \ul\b$ (resp. $O(\b) = (2m+1)\d + \ul\b$)
with $\ul\b \in \ul\vD_0^+$. Define $f: \wh\vS_x(2m) \to \ul\vD$
(resp. $f: \wh\vS_x(2m +1) \to \ul\vD$ by $f(\b) = m\d + \ul\b$. 
Then $f$ induces a bijection $\wh\vD^{\re,+} \isom \ul\vD^{\re,+}$.  
More precisely, the map is given as follows;
\begin{equation*}
\begin{aligned}
\wh\vS_s(2m) &\isom \ul\vS_s(m),  \\ 
\wh\vS_l(2m) \sqcup \wh\vS_{l''}(2m) &\isom \ul\vS_l(m).
\end{aligned}
\end{equation*}
The case for $\wh\vS_x(2m+1)$ is given similarly.
\end{enumerate}
\end{prop}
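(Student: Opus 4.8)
The plan is to reduce the whole statement to the partition of $\vD_0^+$ established in Lemma 5.10, combined with the explicit description of $\ul\vD^{\re,+}$ recalled in 5.7 and the key values $O(\a_0), O(\th)$ from (5.9.1), (5.9.2). The assertion (i) will come out as a byproduct: since $\s$ permutes the simple roots and fixes $\d$, it stabilizes $\vD^+$, so $O(\b)$ is a sum of positive roots and hence positive; for imaginary $\b = m\d$ one has $O(m\d) = m\d$, which under $(Q^\s)' \simeq \ul Q$ is a positive imaginary root of $\ul\vD$, while the real roots are covered by the computation done for (ii), (iii).

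For the real roots, the crucial simplification is again that $\s$ fixes $\d$: for $\b = m\d \pm \a$ with $\a \in \vD_0^+$ the orbit sum decouples, its $\vD_0$-contribution being governed entirely by $O(\pm\a)$ and its $\d$-contribution being a multiple of $m\d$ determined by the orbit size in the generic case, or else by the non-admissible doubling rule $O(\b) = 2(\b + \s(\b))$ when $\b + \s(\b)$ is real. Thus $O(\b)$ is pinned down by the single datum $O(\a)$ together with the residue of $m$ modulo $\ve$. By Lemma 5.10 each $\a \in \vD_0^+$ lies in exactly one piece $\vS_x^+$, and the relations (5.9.1), (5.9.2) together with the determinations of $\vS_0^+$ in 5.3--5.6 identify $O(\a)$ as one of $\ul\b$, $\d + \ul\b$, $2\d + \ul\b$, $3\d + \ul\b$ (or the $2\ul\b$-analogues in Case (A)) with $\ul\b \in \ul\vD_0^+$.

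Feeding these into $O(m\d \pm \a)$ produces an element of $\ul\vD^{\re,+}$ whose $\d$-coefficient lies in a fixed residue class; matching against (5.7.1) in Cases (A), (B), (C) and against (5.7.2) / the $A_{2n}^{(2)}$-description in Case (A) then yields exactly the asserted correspondences $\wh\vS_x(m) \isom \ul\vS_y(m)$ with the stated congruence on $m$. Bijectivity is immediate afterwards: the map $f$ is injective on equivalence classes by the very definition of the relation $\b \sim \b' \iff O(\b) = O(\b')$, and surjectivity together with the exhaustion of $\wh\vD^{\re,+}$ follows from the cardinality bookkeeping already carried out inside the proof of Lemma 5.10, which matched each $\ul\vS_s(m)$, $\ul\vS_l(m)$, $\ul\vS_{2s}(m)$ exactly once. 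I would organize the write-up as four short passages, one per case, each invoking Lemma 5.10 and reading off the dictionary.

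The main obstacle I anticipate is the bookkeeping of the $\d$-coefficient modulo $\ve$: getting the congruences ($m \bmod 2$ in Case (C), $m \bmod 4$ in Cases (A), (B)) to emerge correctly, and in Case (D) correctly implementing the doubling rule $O(\b) = 2(\b+\s(\b))$, which is precisely what forces the rescaled indexing $\wh\vS_x(2m) \isom \ul\vS_y(m)$ and the even/odd split between the $\vS_x(2m)$ and $\vS_x'(2m+1)$ families. All of this is routine but delicate case-by-case verification rather than a single conceptual step.
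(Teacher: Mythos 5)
Your proposal is correct and coincides with the paper's own argument: the paper offers no written proof of Proposition 5.12 beyond the remark that it is ``straightforward from Lemma 5.10, by using the description of the root system $\ul\vD^{\re,+}$ given in 5.7,'' and your plan---combining the partition of $\vD_0^+$ from Lemma 5.10 with (5.7.1)/(5.7.2), the values (5.9.1), (5.9.2), the determinations of $\vS_0^+$ in 5.3--5.6, the decoupling of the $\d$-coefficient via orbit size, and the doubling rule $O(\b) = 2(\b+\s(\b))$ forcing the rescaled indexing in Case (D)---is exactly that reduction, carried out with the correct bookkeeping. You also correctly identify the only genuinely delicate points (the residues of the $\d$-coefficient modulo $\ve$, injectivity being automatic from the definition of the equivalence relation, and surjectivity following from the counting already done in the proof of Lemma 5.10), so nothing is missing relative to the paper.
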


\par\bigskip
\section{ Modified PBW-bases -- affine case }

\para{6.1.}
We keep the assumption in 5.1.
We consider the doubly infinite sequence 
$\ul\Bh = (\dots, \eta_{-1}, \eta_0, \eta_1, \dots)$ for
$\ul W$ associated to $\ul\xi \in \ul P\ck_{\cl}$ as defined in [SZ2, 1.5], 
applied for $\ul\BU_q^-$. We define a sequence 
$\Bh' = (\dots, i_{-1}, i_0, i_1, \dots)$ by replacing $\eta = \eta_i$ by 
a sequence $J_{\eta}$ in $I$, where 
\begin{equation*}
J_{\eta} = \begin{cases}
               (j_1, \dots, j_{|\eta|}) &\quad\text{ if $\d_{\eta} = 1$ and }
                             \eta = \{ j_1, \dots, j_{|\eta|}\}, \\
               (i,j,i) &\quad\txt{ if $\d_{\eta} = 2$ and $\eta = \{ i,j\}$. }
           \end{cases}             
\end{equation*}
Thus 
\begin{equation*}
\tag{6.1.1}
\Bh' = (\dots, \underbrace{i_{-k-k''}, \dots, i_{-k-1}}_{J_{\eta_{-1}}}
                 \underbrace{i_{-k}, \dots, i_{-1}, i_0}_{J_{\eta_{0}}}, 
           \underbrace{i_1, \dots, i_{k'}}_{J_{\eta_1}}, \cdots). 
\end{equation*}

For any $k < \ell$, $\ul w = s_{\eta_k}\cdots s_{\eta \ell}$ is a reduced expression 
of $\ul w \in \ul W$.   
We define $w \in W$ by 
\begin{equation*}
\tag{6.1.2}
w = t_{\eta_k}\cdots t_{\eta_{\ell}},
\end{equation*} 
where $t_{\eta}$ is defined as in 3.2. 
In view of Proposition 5.12, 
\begin{equation*}
\vD^+ \cap w\iv(-\vD^+) = f\iv(\ul\vD^+ \cap \ul w\iv(-\ul\vD^+)).
\end{equation*}
It follows that (6.1.2) is a reduced expression of $w$, and $\Bh'$ gives 
an infinite reduced word. 
We define $\ul\b_k \in \ul\vD^{\re,+}$ for any $k \in \BZ$ by
\begin{equation*}
\tag{6.1.3}
\ul\b_k = \begin{cases}
         s_{\eta_0}s_{\eta_{-1}}\cdots s_{\eta_{k+1}}(\ul\a_{\eta_k})
                            &\quad\text{ if $k \le 0$, } \\
         s_{\eta_1}s_{\eta_2}\cdots s_{\eta_{k-1}}(\ul\a_{\eta_k})
                            &\quad\text{ if $k > 0$. } 
       \end{cases}
\end{equation*} 
Then by [BN, 3.1], we have
\begin{equation*}
\tag{6.1.4}
\ul\vD_{>}^{\re,+} = \{ \ul\b_k \mid k \le 0\}, \qquad
\ul\vD_{<}^{\re,+} = \{ \ul\b_k \mid k > 0\}. 
\end{equation*}
\par
Also define $\b_k \in \vD^{\re,+}$ for $k \in \BZ$ by 
\begin{equation*}
\tag{6.1.5}
\b_k = \begin{cases}
         s_{i_0}s_{i_{-1}}\cdots s_{i_{k+1}}(\a_{i_k})
                            &\quad\text{ if $k \le 0$, } \\
         s_{i_1}s_{i_2}\cdots s_{i_{k-1}}(\a_{i_k})
                            &\quad\text{ if $k > 0$. } 
       \end{cases}
\end{equation*} 
\par
Recall $\vS_x(m), \vS_x'(m)$ in (5.11.1) for 
$x \in \SX = \{ s, 2s', 2s'', l, l', l'', l'''\}$
and $\vS_0(m)$, $\vS_0'(m)$ in (5.11.2). 
We define $\vD^{(0)}_{>}, \vD^{(0)}_{<}$ by

\begin{equation*}
\vD^{(0)}_{>} = \bigsqcup_{m \ge 0}\bigsqcup_{x \in \SX}\vS_x(m), 
\qquad
\vD^{(0)}_{<} = \bigsqcup_{m \ge 1}\bigsqcup_{x \in \SX}\vS'_x(m). 
\end{equation*}

Also set
\begin{equation*}
\vD^{(1)}_{>} = \bigsqcup_{m \ge 0}\vS_0(m), \qquad
\vD^{(1)}_{<} = \bigsqcup_{m \ge 1}\vS'_0(m).
\end{equation*}

Then $\vD^{(0)}_{>}$ and $\vD^{(0)}_{<}$ are $\s$-stable, and 
$\vD^{(1)}_{>} \sqcup \vD^{(1)}_{<}$ is $\s$-stable.
By Proposition 5.12 and (6.1.4), we see that

\begin{equation*}
\tag{6.1.6}
\{ \b_k \mid k \le 0\} = \vD^{(0)}_{>}, \qquad
\{ \b_k, \mid k > 0\} = \vD^{(0)}_{<}.
\end{equation*} 

Moreover, we have
$\vD^{\re,+} = \vD^{(0)}_{>} \sqcup \vD^{(1)}_{>} \sqcup \vD^{(1)}_{<} \sqcup \vD^{(0)}_{<}$.  

\para{6.2.}
Hereafter we apply the theory of PBW-basis associated to the convex order
developed by Muthiah-Tingley [MT] to our setup.  
We follow the notation in [SZ2, Section 4].  
For any real root $\b = \a + m\d \in \vD^{\re}$, we denote by $\ol\b = \a \in \vD_0$.
Set $\vD_+^{\min} = \vD^{\re,+} \sqcup \{ \d\}$. 
For any subset $Z$ of $\vD_+^{\min}$, we define a total order $\a \lve \b$, 
called a convex order, 
by the following two conditions, 
\par\medskip\noindent
(6.2.1) \ If $\a, \b \in Z$ with $\a + \b \in Z$, then $\a+ \b$ is in between 
$\a$ and $\b$. 
\par\medskip\noindent
(6.2.2) \ If $\a \in Z, \b \in \vD_+^{\min} - Z$ with $\a+\b \in Z$, then 
$\a \lv \a + \b$. 
\par\medskip\noindent
If in (6.2.2), the condition ``$\a \lv \a + \b$'' is replaced by
``$\a + \b \lv \a$'', such a total order is called a reverse convex order.     

\par\medskip
We define a total order $\lv$ on $\vD^{(0)}_{>}$ by 
$\b_0 \lv \b_{-1} \lv \b_{-2} \cdots$, and a total order $\lv$ on $\vD^{(0)}_{<}$
by $\cdots \lv \b_2 \lv \b_1$. Then by [SZ2, Lemma 4.3], $\lv$ gives a convex order
on $\vD^{(0)}_{>}$ and a reverse convex order on $\vD^{(0)}_{<}$.   
This implies that 
\par\medskip
\noindent
(6.2.3) \ $\vD^{(0)}_{>} \lv \d \lv \vD^{(0)}_{<}$ satisfies the condition 
(6.2.1). In particular, $\b \lv \d + \b$ for $\b \in \vD^{(0)}_{>}$
and $\b + \d \lv \b$ for $\b \in \vD^{(0)}_{<}$.  
\par\medskip
We shall define a total order on $\vD^{(1)}_{>}$ and on $\vD^{(1)}_{<}$. 
In the case where $(X, \ul X)$ is in the case (A), (C), ot (D), $\vS_0^+$ 
is a positive subsystem of $\vD^+$ of type $nA_1$ for some $n \ge 1$. 
In this case, write $\vS_0^+ = \{ \g_1, \dots, \g_t\}$ in any order. We define 
a total order on $\vS^{(1)}_{>}$ by the condition that $m\d + \g_i \lv m'\d + \g_j$
if $m < m'$ or if $m = m'$ and $i < j$. 
Similarly, we define a total order on $\vD^{(1)}_{<}$ by the condition that
$m\d - \g_i \lv m'\d - \g_j$ if $m > m'$ or if $m = m'$ and $i > j$.   
\par
We now consider the case (B).  In this case $\vD_0^+$ is a positive subsystem 
in $\vD^+$ of type $(n-1)A_1 + A_3$ by (5.4.11). 
Thus $\vD_0^+ = X_1 \sqcup X_2$, where $X_1$ is a positive system of type
$A_3$ and $X_2$ is a positive system of type $(n-1)A_1$.  
Let $\vD^{(1)}$ be the subsystem of $\vD$
spanned by $\g \in X_1, \g_0 = \d - \th_1$, and $\g', \d - \g'$ for $\g' \in X_2$, 
where $\th_1$ is the highest root in $X_1$.  
Then $\vD^{(1)}$ is an affine root system of type $A_3^{(1)} + (n-1)A_1^{(1)}$.
We denote by $W_1$ the Wyl group of $\vD^{(1)}$. We consider the infinite reduced
word $\Bh^{(1)} = (\dots, i_{-1}, i_0, i_1, \dots)$ of $W_1$ defined in [BN] as 
in [SZ, 1.5], and define $\b_k' \in (\vD^{(1)})^{\re,+}$ for $k \in \BZ$ 
as in  (6.1.5).   Then we have
\begin{equation*}
\tag{6.2.4}
\{ \b'_k \mid k \le 0\} = \vD^{(1)}_{>}, \qquad
\{ \b'_k \mid k > 0\}   = \vD^{(1)}_{<}. 
\end{equation*} 
Note that the discussion in the case (B) works also for the cases (A), (C), (D).
Thus again by [SZ2, Lemma 4.3], we see that 
\par\medskip\noindent
(6.2.5). $\vD^{(1)}_{>} \lv \d \lv \vD^{(1)}_{<}$ satisfies the 
condition (6.2.1).
\par\medskip
We define a total order $\lv$ on $\vD_+^{\min} = \vD^+ \cup \{ \d\}$ 
by extending the total order on $\vD^{(i)}_{>}$ and $\vD^{(i)}_{<}$ for $i = 1,2$, 
under the condition 
\begin{equation*}
\tag{6.2.6}
\vD^{(0)}_{>} \lv \vD^{(1)}_{>} \lv \d \lv \vD^{(1)}_{<} \lv \vD^{(0)}_{<}.
\end{equation*}

Recall that the coarse type of the convex order $\lve$ on $\vD_+^{\min}$
is defined as the unique $\ol w \in W_0$ such that 
$\ol w(\vD^+_0) = \{ \ol \b \in \vD_0 \mid \b \lv \d\}$ (see [SZ2, 4.8]).
The convex order is called the standard type if the coarse type $\ol w $ coincides with 
the identity element in $W_0$. 

\begin{lem}  
The total order $\lv$ gives a convex order on $\vD_+^{\min}$.
\end{lem}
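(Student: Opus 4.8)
The plan is to verify the defining condition (6.2.1) for $Z = \vD_+^{\min}$ (the condition (6.2.2) being vacuous once $Z$ is the whole set), i.e.\ that whenever $\a,\b\in\vD_+^{\min}$ with $\g := \a+\b\in\vD_+^{\min}$, the root $\g$ lies strictly between $\a$ and $\b$ in $\lv$. Since (6.2.1) is a condition on the single triple $\{\a,\b,\g\}$, it is equivalent to the statement that every initial segment of $(\vD_+^{\min},\lve)$ is biclosed, and I would organize the check according to the five blocks of (6.2.6), namely $\vD^{(0)}_{>}\lv\vD^{(1)}_{>}\lv\d\lv\vD^{(1)}_{<}\lv\vD^{(0)}_{<}$. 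Triples with both of $\a,\b$ in $\vD^{(0)}\cup\{\d\}$ are convex by (6.2.3), and triples with both in $\vD^{(1)}\cup\{\d\}$ are convex by (6.2.5); these are [SZ2, Lemma 4.3] applied to $\vD$ and to the auxiliary system $\vD^{(1)}$ of 6.2. Hence the entire content of the lemma is the behaviour of the \emph{mixed} triples, in which one of $\a,\b$ lies in $\vD^{(0)}$ and the other in $\vD^{(1)}$.

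The first step is a structural description of the two families. Writing $\ol\b$ for the finite part of $\b$, I claim $\vD^{(1)}=\{\b\in\vD^{\re,+}\mid\ol\b\in\vS_0\}$ and $\vD^{(0)}=\{\b\in\vD^{\re,+}\mid\ol\b\notin\vS_0\}$, where $\vS_0=\vS_0^+\sqcup(-\vS_0^+)$ is the finite subsystem determined in 5.3--5.6 (of type $nA_1$ in the cases (A), (C), (D), and of type $(n-1)A_1+A_3$ in the case (B) by (5.4.11)). This is immediate from the definitions of $\vS_0(m),\vS_0'(m)$ in (5.11.2) together with Lemma 5.10. Since $\vS_0$ is a closed subsystem, if $\a\in\vD^{(0)}$ and $\b\in\vD^{(1)}$ with $\g=\a+\b$ a root, then $\ol\g=\ol\a+\ol\b\notin\vS_0$ (otherwise $\ol\a=\ol\g-\ol\b$ would lie in $\vS_0$), and also $\ol\g\ne 0$; hence $\g\in\vD^{(0)}$, and in particular $\g\ne\d$. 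So a mixed sum always returns to $\vD^{(0)}$.

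It then remains to locate $\g$ precisely, and for this I would use Proposition 5.12, which gives an order-preserving bijection $O:\wh\vD^{\re,+}\isom\ul\vD^{\re,+}$; by (6.1.6) it matches $\vD^{(0)}_{>}$ with $\ul\vD^{\re,+}_{>}$ and $\vD^{(0)}_{<}$ with $\ul\vD^{\re,+}_{<}$, while every root of $\vD^{(1)}$ is sent into $\BZ_{>0}\d$. Combined with the property recorded in (6.2.3), that $\b\lv\d+\b$ for $\b\in\vD^{(0)}_{>}$ and $\b+\d\lv\b$ for $\b\in\vD^{(0)}_{<}$, this should determine, for each mixed triple, first that $\g$ lies on the same side of $\d$ as its $\vD^{(0)}$-summand, and then that it occupies the correct slot of the $\vD^{(0)}$-order between $\a$ and $\b$. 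In the cases (A), (C), (D), where $\vS_0$ is a sum of mutually orthogonal $A_1$'s, the roots of $\vD^{(1)}$ break into commuting $A_1^{(1)}$-strings and this verification is uniform; the case (B) must be treated separately, using (5.10.3) and the internal convex order of the $A_3^{(1)}$-factor of $\vD^{(1)}$ supplied by (6.2.5).

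I expect the genuine obstacle to be exactly this last localization step for the mixed triples that straddle $\d$ (a $\vD^{(0)}_{>}$-root added to a $\vD^{(1)}_{<}$-root, and symmetrically a $\vD^{(0)}_{<}$-root added to a $\vD^{(1)}_{>}$-root): there the orbit-sum map $O$ is \emph{not} additive, because of the differing orbit sizes and the factor-$2$ convention of 5.2 --- most visibly in case (D), where $O(\th)=2(\th+\s(\th))$ --- so one cannot simply transport the inequality from the $\ul\vD$-side, and the side and position of $\g$ have to be pinned down from the explicit root data of 5.4--5.6. Once every mixed triple is checked, together with the within-block cases already granted by (6.2.3) and (6.2.5), condition (6.2.1) holds for all triples, and therefore $\lv$ is a convex order on $\vD_+^{\min}$.
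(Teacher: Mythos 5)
Your framing agrees with the paper's proof up to its last step: you reduce to condition (6.2.1), note that (6.2.2) is vacuous, dispose of the within-block triples by (6.2.3) and (6.2.5), and isolate the mixed triples $\b \in \vD^{(0)}_{>} \sqcup \vD^{(0)}_{<}$, $\b' \in \vD^{(1)}_{>} \sqcup \vD^{(1)}_{<}$. Your argument that a mixed sum $\g = \b + \b'$ lands back in $\vD^{(0)}$ and avoids $\d$ --- via the characterization of $\vD^{(1)}$ as the positive real roots with finite part in $\vS_0$, together with the closedness of $\vS_0$ --- is correct, and is a legitimate substitute for the paper's step $O(\g) \notin \BZ_{>0}\d$. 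The genuine gap is that you then stop at exactly the one nontrivial point of the lemma: you never prove that $\g$ lies on the same side of $\d$ as $\b$, nor that $\b \lv \g$ within that side; you only forecast that ``the side and position of $\g$ have to be pinned down from the explicit root data of 5.4--5.6,'' with separate treatments of the cases (A), (C), (D) and (B). That forecast \emph{is} the missing proof, so as submitted the argument is incomplete at its crux.

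Moreover, the gap closes uniformly, with none of the case analysis you anticipate. For the position: once $\g \in \vD^{(0)}_{>}$ is known, $\b \lv \g$ is immediate, since by [SZ2, Lemma 4.3] the order on $Z = \vD^{(0)}_{>}$ is a convex order satisfying both (6.2.1) and (6.2.2), and here $\b, \g \in Z$ while $\b' \in \vD^{\min}_+ - Z$, so (6.2.2) gives $\b \lv \b + \b' = \g$; the inequality $\g \lv \b'$ is then automatic from the block order (6.2.6) (symmetrically for $\b \in \vD^{(0)}_{<}$, using the reverse convex order). For the side: the paper supposes $\b \in \vD^{(0)}_{>}$, $\b' \in \vD^{(1)}_{<}$, $\g \in \vD^{(0)}_{<}$, writes $\b = m\d + \a$, $\b' = m'\d - \a'$, $\g = m''\d - \a''$ with $\a' \in \vS_0^+$, and derives $\a + \a'' = \a'$, a contradiction. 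Note this contradiction is \emph{not} a combinatorial fact about $\vD_0$ alone --- in case (D) the root $\ve_{n+1} - \ve_{2n+2} \in \vS_0^+$ does decompose as a sum of two positive roots outside $\vS_0$ --- it is detected on the $\ul\vD$-side: the $\ul\vD_0$-components of the orbit sums of $\a$ and $\a''$ are positive, while the orbit sum of $\a'$ lies in $\BZ\d$. And contrary to your worry, this transport is available: although $O$ itself is not additive (the factor-$2$ convention of 5.2), the full $\ve$-fold sum $\sum_{0 \le i < \ve}\s^i(\,\cdot\,)$ is additive and is a positive rational multiple of $O$, which is all that is needed --- adding an element of $\vD^{(1)}$ leaves the sign of the $\ul\vD_0$-component of the orbit sum unchanged, and that sign is exactly what separates $\vD^{(0)}_{>}$ from $\vD^{(0)}_{<}$.
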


\begin{proof}
We show that $\lv$ satisfies the condition (6.2.1) for $X = \vD_+^{\min}$.
Note that in this case, the condition (6.2.2) is redundant.  
It is enough to consider the case where $\b \in \vD^{(0)}_{>} \sqcup \vD^{(0)}_{<}$
and $\b' \in \vD^{(1)}_{>}\sqcup \vD^{(1)}_{<}$.
Take $\b \in \vD^{(0)}_{>}$, and assume that $\g = \b + \b' \in \vD^+$. 
We show that $\g \in \vD^{(0)}_{>}$ and $\b \lv \g$. 
We have $O(\g) = O(\b) + O(\b') \notin \BZ_{>0}\d$ since $O(\b) \notin \BZ_{>0}\d$
and $O(\b') \in \BZ_{>0}\d$. Thus $\g \in \vD^{(0)}_{>} \sqcup \vD^{(0)}_{<}$. 
If $\b' \in \vD^{(1)}_{>}$, we must have $\g \in \vD^{(0)}_{>}$, and 
$\b \lv \g$ (see (6.2.3)).  So consider the case where $\b' \in \vD^{(1)}_{<}$. 
Suppose that $\g \in \vD^{(0)}_{<}$.  We can write as $\b = m\d + \a$, 
$\b' = m'\d - \a'$ and $\b + \b' = m''\d - \a''$, where $\a, \a'' \in \vD_x^+$ 
and $\a' \in \vD_0^+$. Then we have $\a + \a'' = \a'$. 
This contradicts $\a' \in \vS_0^+$. Hence $\g \in \vS^{(0)}_{>}$, and 
we have $\b \lv \g$ by (6.2.3). Thus $\lv$ gives a convex order on $\vD_+^{\min}$. 
\end{proof}

\para{6.4.}
Let $\lv$ be the convex order on $\vD^{\min}_+$ defined in Lemma 6.3.
Let $\SC$ be the set of triples $\Bc = (\Bc_+, \Bc_0, \Bc_-)$, where
$\Bc_+ = (c_{\b})_{\b \lv \d}$, 
$\Bc_- = (c_{\g})_{\g \gv \d}$, and $\Bc_0 = (\r^{(i)})_{i \in I_0}$ 
is an $I_0$-tuple of partitions.    
For each finite sequence 
\begin{equation*}
\b_1 \lv \b_2 \lv \cdots \lv \b_N \lv \d \lv \g_M \lv \cdots \lv \g_2 \lv \g_1 
\end{equation*}
in $\vD^{\min}_+$, we define
\begin{equation*}
\tag{6.4.1}
L(\Bc, \lv) = f_{\b_1}^{\lv, (c_{\b_1})}\cdots f_{\b_N}^{\lv, (c_{\b_N})}
                   S_{\Bc_0}^{\lv}
             f_{\g_M}^{\lv, (c_{\g_M})}\cdots f_{\g_1}^{\lv, (c_{\g_1})}.
\end{equation*}
as in [SZ2, 4.19].  Then by the discussion in [SZ2, Section 4], 
$\SX_{\lv} = \{ L(\Bc, \lv) \mid \Bc \in \SC\}$ gives a PBW-basis of $\BU_q^-$. 
The coarse type $\ol w \in W_0$ of the convex order $\lv$ is 
given by $\ol w = \prod_{i \in \eta_0 - \{0\}}s_i$.
Hence by [MT, Theorem 4.13] that $S_{\Bc_0}^{\lv} = T_w S_{\Bc_0}$, where 
$S_{\Bc_0}$ is the one defined by [BN] (see [SZ2, 1.5]), and $w \in W$ is the minimal
length lift of $\ol w \in W_0$. 
Also by [SZ2, Lemma 6.7], for $\b \in \vD^{(0)}_{>} \sqcup \vD^{(0)}_{<}$, 
$f_{\b}^{\lv, (c)}$ coincides with $f_{\b_k}^{\Bh'}$, where
for $\b = \b_k$ in the notation of (6.1.6), 
we define the root vector $f_{\b_k}^{\Bh'}$ (with respect to the infinite reduced word
$\Bh'$ in (6.1.1)) by  
\begin{equation*}
f_{\b_k}^{\Bh'} = \begin{cases}
                    T_{i_0}T_{i_{-1}}\cdots T_{i_{k+1}}f_{i_k}^{(c)}
                             &\quad\text{ if }  k \le 0, \\
                    T_{i_1}\iv T_{i_2}\iv \cdots T_{i_{k-1}}\iv f_{i_k}^{(c)}
                             &\quad\text{ if } k > 0. 
                  \end{cases}
\end{equation*}  

\remark{6.5.}  In [SZ2, Lemma 6.4], it is stated that the convex order $\lv$ 
defined there is of standard type.  But this is incorrect, and the coarse type
$\ol w \in W_0$ is given in a similar way as above.  

\para{6.6.}
Let $\lv$ be a convex order of $\vD^{\min}_+$ such that $\b_0$ is minimal, 
i.e., $\b_0 \lv \b_{-1} \lv \b_{-2} \cdots$. We assume further that $\b_0 = \a_i$
is a simple root. Let $s_i = s_{\a_i} \in W$ be the simple reflection. 
We can define a new convex order $\lv^{s_i}$ by moving $\b_0$ to the right end, 
and by acting $s_i$ on the remaining part, namely, 
\begin{equation*}
s_i(\b_{-1}) \lv^{s_i} s_i(\b_{-2}) \lv^{s_i} \cdots \lv^{s_i}  s_i(\b_1) 
                                            \lv^{s_i} \b_0, 
\end{equation*}  
where $\b_0$ is maximal in $\lv^{s_i}$. 
Similarly, if $\b_1 = \a_i$ is maximal in $\vD_+^{\min}$, i.e., 
$\cdots \lv \b_2 \lv \b_1$, we can define a new convex order $\lv^{s_i}$ 
in a similar way, by moving $\b_1$ to the left end. 
\par
We apply this construction to our convex order on $\vD^{\min}_+$. 
In this case, the part in $\vD^{(0)}_{>} \sqcup \vD_{<}^{(0)}$ 
is determined by $\Bh'$, namely
for $\Bh' = (\dots, i_{-2}, i_{-1}, i_0, i_1, i_2, \dots)$, the order is given by
\begin{equation*}
\b_0 \lv \b_{-1} \lv \b_{-2} \lv \cdots \lv \b_2 \lv \b_1,
\end{equation*}
where $\b_0 = \a_{i_0}$, $\b_1 = \a_{i_1}$, 
and $\b_k$ are given as in (6.1.5). 
Thus the above construction of $\lv^{s_i}$ can be applied successively for 
$s_{i_0}, s_{i_{-1}}, \dots, s_{i_{k+1}}$, and one can define a convex order 
$\lv^w$ for $w = s_{i_0}s_{i_{-1}}\cdots s_{i_{k+1}}$ by 
$\lv^w = (\cdots ((\lv^{s_{i_0}})^{s_{i_{-1}}}) \cdots )^{s_{i_{k+1}}}$. 
We obtain 
\begin{equation*}
\tag{6.6.1}
\a_{i_k} \lv^w  s_{i_{k}}(\a_{i_{k-1}}) \lv^w s_{i_k}s_{i_{k-1}}(\a_{i_{k-2}}) 
             \lv^w \cdots 
\end{equation*}
Similarly, one can define a convex order $\lv^w$ for 
$w = s_{i_1}\cdots s_{i_{k-1}}$ ($k \ge 1$), and we obtain 
\begin{equation*}
\tag{6.6.2}
\cdots \lv^w  s_{i_k}s_{i_{k+1}}(\a_{i_{k+2}}) \lv^w s_{i_k}(\a_{i_{k+1}}) 
             \lv^w  \a_{i_k}
\end{equation*}

Here for each $p \le 0$, we consider the subsequence 
$(\eta_p, \dots, \eta_{-1}, \eta_0)$ of $\ul\Bh$, and let 
$(i_{k+1}, \dots, i_{-1}, i_0)$ ($k \le 0$) 
 be the subsequence of $\Bh'$ corresponding to
$(J_{\eta_p}, \dots, J_{\eta_0})$.  
Similarly, we define a subsequence $(i_1, \dots, i_{k-1})$ of $\Bh'$ 
associated to the subsequence $(\eta_1, \eta_2, \dots, \eta_p)$ for $p \ge 1$. 
Let $w = s_{i_0}\cdots s_{i_{k+1}}$ or $w = s_{i_1}\cdots s_{i_{k-1}}$, and 
consider the convex order $\lv^w$ defined above, which we denote by $\lv_p$.   
Thus $\lv_p$ gives a two-row order 
\begin{equation*}
\vD^{(0)}_{>,p} \lv_p \vD^{(1)}_{>,p} \lv_p \d \lv_p \vD^{(1)}_{<,p} \lv_p \vD^{(0)}_{<,p}. 
\end{equation*}
\par
Now we define $L(\Bc, \lv_p)$ by replacing $\lv$ by $\lv_p$ in (6.4.1). 
We note, by [MT, Cor. 4.14], that 
\begin{equation*}
\tag{6.6.3}
S_{\Bc_0}^{\lv_p} = \begin{cases}
                      T_{i_{p+1}}\iv\cdots T_{i_0}\iv(S_{\Bc_0}^{\lv}), 
                            &\quad\text{ if $p \le 0$,} \\
                      T_{i_{p-1}}\cdots T_{i_2}T_{i_1}(S_{\Bc_0}^{\lv}), 
                            &\quad\text{ if $p \ge 1$.}
                    \end{cases}
\end{equation*} 
Set
\begin{equation*} 
\SX_{\lv_p}  = \{ L(\Bc, \lv_p) \mid \Bc \in \SC_p\},
\end{equation*}
where $\SC_p$ is the parameter set for $\lv_p$. 
Then 
$\SX_{\lv_p}$ gives a PBW-basis of $\BU_q^-$. 
\par
By [MT, Prop.4.18], the following holds.

\begin{equation*}
\tag{6.6.4}
\begin{aligned}
T_{i_p}\iv L(\Bc, \lv_p) &= L(\Bc', \lv_{p-1}), &
             &\qquad\text{if $p \le 0$ and $c_{\a} = 0$, } \\
T_{i_{p+1}} L(\Bc, \lv_p) &= L(\Bc', \lv_{p+1}), &
             &\qquad\text{if $p \ge 1$ and $c_{\a} = 0$.} 
\end{aligned}
\end{equation*}
Here $\a = \a_{i_p}$ or $\a = \a_{i_{p+1}}$ according to $p \le 0$ or $p \ge 1$. 
$\Bc'$ is defined by $c'_{\b} = c_{s_{i_p}(\b)}$ for $\b \ne \a$, 
$c'_{\a} = 0$, and the imaginary part is defined by the action of $T_{i_p}\iv$ 
or $T_{i_{p+1}}$ accordingly (see (6.6.3)). 

\para{6.7.}
Following the discussion in 3.4, we modify the root vectors corresponding 
to $\vD^{(0)}_{>,p}$ and $\vD^{(0)}_{<,p}$. Note that for $\b \in \vD^{(0)}_{>,p}$
or $\b \in \vD^{(0)}_{<,p}$, $f_{\b}^{\lv}$ coincides with $f_{\b}^{\Bh'(w)}$, 
where $\Bh'(w)$ is the doubly infinite sequence obtained from $\Bh'$ by the action of 
$w \in W$ as in 6.6.
In the case where $\d_{\eta_k} = 1$, set 
$\Bd_k = (d_j)_{j \in \eta_k} \in \BN^{\eta_k} = \BN^{J_k}$, where $J_k$ is 
as in 6.1. We define  
\begin{align*}
\tag{6.7.1}
\wt f_{\eta_k}^{(\Bd_k)} = \begin{cases}
        \wt T_{\eta_p}\wt T_{\eta_{p-1}}\cdots \wt T_{\eta_{k+1}}
                (\prod_{j \in \eta_k}f_j^{(d_j)})  &\quad\text{ if $p \le 0$, } \\
\wt T_{\eta_{p+1}}\iv \wt T_{\eta_{p+2}}\iv \cdots \wt T_{\eta_{k-1}}\iv
                (\prod_{j \in \eta_{k}}f_j^{(d_j)}) &\quad\text{ if $p > 0$. }
                          \end{cases} 
\end{align*}
While in the case where $\d_{\eta_k} = 2$ with $\eta_k = \{ i,j\}$, 
we consider the subalgebra $\BL^-_{\eta_k}$ of $\BU_q^-$ isomorphic to 
$\BU_q(\Fs\Fl_3)^-$, and consider the canonical basis 
$\bB_{\eta_k} = \{ b(\Bd_k, \Bh_{\eta_k})\}$ as defined in 3.4, 
where $\Bd_k \in \BN^3 = \BN^{J_k}$.  
We define
\begin{align*}
\tag{6.7.2}
\wt f_{\eta_k}^{(\Bd_k)} = \begin{cases}
        \wt T_{\eta_p}\wt T_{\eta_{p-1}}\cdots \wt T_{\eta_{k+1}}
                (b(\Bd_k, \Bh_{\eta_k}))  &\quad\text{ if $p \le 0$, } \\
\wt T_{\eta_{p+1}}\iv \wt T_{\eta_{p+2}}\iv \cdots \wt T_{\eta_{k-1}}\iv
                (b(\Bd_k, \Bh_{\eta_k}))  &\quad\text{ if $p > 0$. }
                          \end{cases} 
\end{align*}

In the definition of $L(\Bc, \lv_p)$, we replace the root vectors 
belonging to $\vD^{(0)}_{>}$ and $\vD^{(0)}_{<}$ by (6.7.1) and (6.7.2),
and leave the root vectors belonging to $\vD^{(1)}_{>} \sqcup \vD_{<}^{(1)}$
unchanged.  
Thus one can define $L\nat(\Bd, \lv_p)$ as in 3.4. 
Set
\begin{equation*}
\tag{6.7.3}
\SX\nat_{\lv_p} = \{ L\nat(\Bd, \lv_p) \mid \Bd \in \SC\nat_p\},
\end{equation*} 
where $\SC\nat_p = \{ \Bd = (\Bd\nat_+, \Bd_0, \Bd\nat_-) \}$
is a parameter set for $\SX\nat_{\lv_p}$.  Here if we define $\Bd_{+}^{(0)}$
(resp.$\Bd_-^{(0)}$) 
as the part in $\Bd\nat_{+}$ (resp. $\Bd\nat_-$) corresponding to 
$\vD^{(0)}_{>,p}$ (resp. $\vD^{(0)}_{<,p}$), 
we have
\begin{equation*}
\Bd_{+}^{(0)} = (\Bd_k)_{k \le p} \text{ with } \Bd_k \in \BN^{J_k}, \qquad
\Bd_{-}^{(0)} = (\Bd_k)_{k > p} \text{ with } \Bd_k \in \BN^{J_k}, 
\end{equation*}  
where $J_k = J_{\eta_k}$.
Note that the parameter set $\SC\nat_p$ is naturally identified with $\SC_p$
under the correspondence $\Bd_k \lra (\cdots, c_{\b_j}, \cdots)_{j \in J_k}$
(see 2.1 for the case where $\d_{\eta_k} = 2$). 
$\SX\nat_{\lv_p}$ gives an $\BA$-basis of ${}_{\BA}\BU_q^-$, which we call
the modified PBW-basis of $\BU_q^-$. 
As in (6.6.4), we have the following.

\begin{equation*}
\tag{6.7.4}
\begin{aligned}
\wt T_{\eta_p}\iv L\nat(\Bc, \lv_p) &= L\nat(\Bc', \lv_{p-1}),  & 
        &\qquad\text{ if $p \le 0$ and $\Bc_p = 0$, }  \\
\wt T_{\eta_{p+1}} L\nat(\Bc, \lv_p) &= L\nat(\Bc', \lv_{p+1}),  & 
        &\qquad\text{ if $p > 0$ and $\Bc_p = 0$, }  \\
\end{aligned}
\end{equation*}

\para{6.8.}
Let $\ul\b_0 \lv \ul\b{-1} \lv \cdots \lv \d \lv \cdots \lv \ul\b_1$
be the convex order associated to $\ul\Bh$.  We know that 
$\ul\vD^{\re,+}_{>} = \{ \ul\b_k \mid k \le 0\}$ and 
$\ul\vD^{\re,+}_{<} = \{ \ul\b_k \mid k > 0\}$. Thus for each $\eta \in \ul I$, 
there exists $p \in \BZ$ such that $\eta = \eta_p$. 
We consider the modified PBW-basis $\SX\nat_{\lv_p}$ of $\BU_q^-$. 
In the case where $p \le 0$, we denote by $\BU_q^-[\eta]$ the $\BQ(q)$-subspace 
of $\BU_q^-$ spanned by $L\nat(\Bc, \lv_p)$ such that $\Bc_p = 0$, and set 
${}^*\BU_q^- = *(\BU_q^-[\eta])$, where $* : \BU_q^- \to \BU_q^-$ is 
the anti-algebra automorphism.  While in the case where $p > 0$, we define 
${}^*\BU_q^-[\eta]$ as the $\BQ(q)$-subspace of $\BU_q^-$ spanned by 
$L\nat(\Bc,\lv_p)$ such that $\Bc_p = 0$, and set $\BU_q^-[\eta] = *({}^*\BU_q^-[\eta])$. 
Note that in the case where $\d_{\eta} = 1$ with $\eta = \{ \a_{i_1}, \dots, \a_{i_k}\}$,
where $k = |\eta|$, we consider $\BL^-_{\eta}$ the subalgebra of $\BU_q^-$ generated by
$f_{i_1}, \dots, f_{i_k}$.  Then $\BL^-_{\eta}$ is of type $A_1 \times \cdots \times A_1$ 
($k$-times), and the set 
$\{ f_{i_1}^{(c_1)}\cdots f_{i_k}^{(c_k)}\mid \Bc = (c_1, \dots, c_k) \in \BN^k\}$ 
coincides with the canonical basis 
$\bB_{\eta} = \{ b(\Bc, \Bh_{\eta}) \mid \Bc \in \BN^{|\eta|}\}$.  
In the following discussion, we write $b(\Bc, \Bh_{\eta})$ simply as $b(\Bc_{\eta})$, 
where  
\begin{equation*}
\Bc_{\eta} = \begin{cases}
                 (c_1, \dots, c_{|\eta|}) \in \BN^{|\eta|}
                     &\quad\text{ if $\d_{\eta} = 1$}, \\
                 (c_1, c_2, c_3) \in \BN^3 
                     &\quad\text{ if $\d_{\eta} = 2$}.
             \end{cases}
\end{equation*}
Thus even for the case (6.7.1), we use the same notation as in (6.7.2), 
by using the canonical basis $\bB_{\eta}$. 
\par
In the case where $\eta = \eta_p$ with $p \le 0$, 
let ${}_{\BA}\BU_q[\eta]$ be the $\BA$-submodule of ${}_{\BA}\BU_q^-$ 
spanned by $L^{\sharp}(\Bc, \lv_p)$ such that $\Bc_p = 0$, and 
set ${}_{\BA}{}^*\BU_q^-[\eta] = *({}_{\BA}\BU_q^-[\eta])$. 
In the case where $\eta = \eta_p$ with $p > 0$, let ${}_{\BA}{}^*\BU_q^-[\eta]$
be the $\BA$-submodule of ${}_{\BA}\BU_q^-$ spanned by $L^{\sharp}(\Bc, \lv_p)$ 
such that $\Bc_p = 0$, 
and set ${}_{\BA}\BU_q^-[\eta] = *({}_{\BA}{}^*\BU_q^-[\eta])$.   

\begin{lem}   
\begin{enumerate}
\item 
 We have a direct sum decomposition 
\begin{equation*}
\tag{6.9.1}
\BU_q^- = \bigoplus_{b \in \bB_{\eta}}b\BU_q^-[\eta], \qquad
\BU_q^- = \bigoplus_{b \in \bB_{\eta}}{}^*\BU_q^-[\eta]b.
\end{equation*}
\item 
We have a direct sum decomposition of ${}_{\BA}\BU_q^-$ as $\BA$-modules, 
\begin{equation*}
\tag{6.9.2}
{}_{\BA}\BU_q^- = \bigoplus_{b \in \bB_{\eta}}b{}_{\BA}\BU_q^-[\eta], \qquad
{}_{\BA}\BU_q^- = \bigoplus_{b \in \bB_{\eta}}{}_{\BA}{}^*\BU_q^-[\eta]b.
\end{equation*}
\end{enumerate}
\end{lem}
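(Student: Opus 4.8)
The plan is to deduce both decompositions from a single factorisation of the modified PBW-basis $\SX\nat_{\lv_p}$ attached to $\eta=\eta_p$, exploiting that the block carrying the parameter $\Bc_p$ sits at an extreme end of the convex order $\lv_p$ and, by its very construction, is a genuine canonical-basis element of $\BL^-_\eta$. First I would treat the case $p\le 0$. Here the roots of $J_{\eta_p}$ are $\lv_p$-minimal, so that their factors form the leftmost contiguous chunk of every product $L\nat(\Bc,\lv_p)$; specialising (6.7.1)--(6.7.2) to $k=p$, the prefix of braid operators $\wt T$ is empty, whence this chunk is exactly $b(\Bc_p)\in\bB_\eta$. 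Writing $\Bc^0$ for the parameter obtained from $\Bc$ by putting $\Bc_p=0$, the remaining factors are precisely those defining $L\nat(\Bc^0,\lv_p)$, so that
\begin{equation*}
L\nat(\Bc,\lv_p)=b(\Bc_p)\,L\nat(\Bc^0,\lv_p),\qquad L\nat(\Bc^0,\lv_p)\in\BU_q^-[\eta].
\end{equation*}

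Next I would read off the direct sum. Since $\SX\nat_{\lv_p}$ is an $\BA$-basis of ${}_{\BA}\BU_q^-$ (hence a $\BQ(q)$-basis of $\BU_q^-$) by 6.7, and since $\Bc\mapsto(\Bc_p,\Bc^0)$ identifies the parameter set with $\BN^{J_{\eta_p}}\times\{\Bc:\Bc_p=0\}$, the products $b(\Bc_p)L\nat(\Bc^0,\lv_p)$ run without repetition over $\bB_\eta\times\{L\nat(\Bc^0,\lv_p):\Bc_p=0\}$. As the second factor ranges over a $\BQ(q)$-basis of $\BU_q^-[\eta]$ by definition, this yields the first identity of (6.9.1); the identical count over $\BA$, using $b(\Bc_p)\in{}_{\BA}\BL^-_\eta$ and $L\nat(\Bc^0,\lv_p)\in{}_{\BA}\BU_q^-[\eta]$, yields the first identity of (6.9.2).

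The companion identities then come from the anti-automorphism $*$. Because $*$ reverses products, carries $\BU_q^-[\eta]$ onto ${}^*\BU_q^-[\eta]=*(\BU_q^-[\eta])$, and permutes $\bB_\eta$, applying it to $\BU_q^-=\bigoplus_b b\,\BU_q^-[\eta]$ gives $\BU_q^-=\bigoplus_b {}^*\BU_q^-[\eta]\,b$, the indexing set being unchanged under the permutation. In the case $p>0$ the two ends are interchanged: now $J_{\eta_p}$ is $\lv_p$-maximal, the factorisation reads $L\nat(\Bc,\lv_p)=L\nat(\Bc^0,\lv_p)\,b(\Bc_p)$ with $L\nat(\Bc^0,\lv_p)\in{}^*\BU_q^-[\eta]$, and one obtains $\BU_q^-=\bigoplus_b {}^*\BU_q^-[\eta]\,b$ directly and $\BU_q^-=\bigoplus_b b\,\BU_q^-[\eta]$ after applying $*$; the integral versions are literally the same.

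The hard part will be pinning down the factorisation, namely that the $\eta_p$-block detaches as one canonical-basis factor $b(\Bc_p)$ at the extreme end ($\lv_p$-minimal for $p\le 0$, $\lv_p$-maximal for $p>0$) of $L\nat(\Bc,\lv_p)$. This rests on two facts recorded in 6.6--6.7: the extremal position of the $J_{\eta_p}$-roots, which follows from the successive twisting construction defining $\lv_p$ together with the peeling relation (6.7.4), and the vanishing of the $\wt T$-prefix in (6.7.1)--(6.7.2) for $k=p$, which is what makes the detached factor the untwisted element $b(\Bc_p)$ of $\BL^-_\eta$ rather than a braid-twisted root vector. Once these and the freeness of $\Bc_p$ are confirmed, everything else is bookkeeping with bases and needs no further input.
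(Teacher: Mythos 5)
Your proposal is correct and follows essentially the same route as the paper: the paper's proof likewise reads off the first identity of (6.9.1) (for $p\le 0$) directly from the basis $\SX\nat_{\lv_p}$, obtains the companion identity by applying $*$ (using that $*$ permutes $\bB_{\eta}$), swaps the roles of the two identities when $p>0$, and deduces (ii) from the fact that $\SX\nat_{\lv_p}$ is an $\BA$-basis of ${}_{\BA}\BU_q^-$. Your only addition is to spell out explicitly the factorisation $L\nat(\Bc,\lv_p)=b(\Bc_p)\,L\nat(\Bc^0,\lv_p)$ coming from the untwisted extremal $\eta_p$-block in (6.7.1)--(6.7.2), which the paper leaves implicit in the phrase ``by using the PBW-basis $\SX\nat_{\lv_p}$.''
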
 

\begin{proof}
Assume that $\eta = \eta_p$ for $p \le 0$.  Then by using 
the PBW-basis $\SX\nat_{\lv_p}$, we obtain the first identity in (6.9.1).  
Note that $\s$ acts on $\BL^-_{\eta}$, and $\s$ gives a permutation 
on the canonical basis $\bB_{\eta}$ of $\BL^-_{\eta}$, which is the same as 
the $*$-operation on $\bB_{\eta}$.  Thus the second 
identity follows from the first by applying $*$ on both sides.  
In the case where $\eta = \eta_p$ with $p > 0$, by using $\SX\nat_{\lv_p}$, 
we obtain the second identity in (6.9.1).  By applying $*$, the first identity follows. 
(i) is proved.  (ii) follows from (i) since $\SX\nat_{\lv_p}$ is an $\BA$-basis of 
${}_{\BA}\BU_q^-$. 
\end{proof}

\begin{prop}   
Under the notation above, we have
\begin{align*}
\tag{6.10.1}
\BU_q^-[\eta] &= \{ x \in \BU_q^- \mid \wt T_{\eta}\iv(x) \in \BU_q^-\}, \\ 
\tag{6.10.2}
{}^*\BU_q^-[\eta] &= \{ x \in \BU_q^- \mid \wt T_{\eta}(x) \in \BU_q^-\}.
\end{align*}
\end{prop}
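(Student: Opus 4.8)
The plan is to prove (6.10.1) first and then deduce (6.10.2) from it by transport along the anti-automorphism $*$. Since $*$ preserves $\BU_q^-$, satisfies $*(\BU_q^-[\eta]) = {}^*\BU_q^-[\eta]$ by the definitions in 6.8, and intertwines $\wt T_\eta\iv$ with $\wt T_\eta$ (the standard compatibility $* \circ \wt T_\eta\iv \circ * = \wt T_\eta$), one has for any $x$ that $x \in {}^*\BU_q^-[\eta] \iff *(x) \in \BU_q^-[\eta] \iff \wt T_\eta\iv(*(x)) \in \BU_q^- \iff \wt T_\eta(x) \in \BU_q^-$, the last step applying $*$ to both sides; thus (6.10.2) follows from (6.10.1). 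Choosing a representative $p \le 0$ with $\eta = \eta_p$ (possible since $\eta$ occurs for arbitrarily negative indices), $\BU_q^-[\eta]$ is by definition spanned by the $L\nat(\Bc, \lv_p)$ with $\Bc_p = 0$.

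The inclusion $\BU_q^-[\eta] \subseteq \{ x \mid \wt T_\eta\iv(x) \in \BU_q^-\}$ is immediate from the transition formula (6.7.4): for $\Bc_p = 0$ one has $\wt T_\eta\iv L\nat(\Bc, \lv_p) = L\nat(\Bc', \lv_{p-1})$, an element of the modified PBW-basis $\SX\nat_{\lv_{p-1}}$ of $\BU_q^-$. Summing over a basis gives $\wt T_\eta\iv(\BU_q^-[\eta]) \subseteq \BU_q^-$.

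For the reverse inclusion I would use Lemma 6.9(i). Given $x$ with $\wt T_\eta\iv(x) \in \BU_q^-$, write $x = \sum_{b \in \bB_\eta} b\, x_b$ with $x_b \in \BU_q^-[\eta]$; the aim is to show $x_b = 0$ for $b \ne 1$. Applying $\wt T_\eta\iv$ and using the inclusion just proved (so that $z_b := \wt T_\eta\iv(x_b) \in \BU_q^-$), we get $\wt T_\eta\iv(x) = z_1 + \sum_{b \ne 1} \wt T_\eta\iv(b)\, z_b$. The decisive point is that $\wt T_\eta$ restricts on the quantum Levi $\BL_\eta$ to the braid operator of the longest element $w_\eta$ of $W_\eta$ (uniformly for $\d_\eta = 1$ and $\d_\eta = 2$); hence for $b \in \bB_\eta$ of weight $-\nu$ with $\nu$ supported on $\eta$, the element $\wt T_\eta\iv(b)$ is homogeneous of weight $w_\eta(-\nu) \in Q_+$ and lies in $\BL_\eta^+\,\BU_q^0$. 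Thus $\wt T_\eta\iv(1) = 1$, while for $b \ne 1$ the weight $w_\eta(-\nu)$ is strictly positive, so $\wt T_\eta\iv(b)$ has no component of $\BU_q^+$-degree $0$.

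To conclude, I would pass to the triangular decomposition $\BU_q = \BU_q^+ \otimes \BU_q^0 \otimes \BU_q^-$ and compare $\BU_q^+$-degrees. Both $z_1$ and $\wt T_\eta\iv(x)$ have degree $0$, whereas each summand $\wt T_\eta\iv(b)\, z_b$ with $b \ne 1$ lies in strictly positive degree; hence the positive-degree part of $\wt T_\eta\iv(x)$ equals $\sum_{b \ne 1} \wt T_\eta\iv(b)\, z_b$ and must vanish. Since $\{\wt T_\eta\iv(b) \mid b \in \bB_\eta,\ b \ne 1\}$ is linearly independent ($\wt T_\eta\iv$ being injective and $\bB_\eta$ a basis), injectivity of the multiplication map $\BU_q^+\BU_q^0 \otimes \BU_q^- \to \BU_q$ forces $z_b = 0$, hence $x_b = \wt T_\eta(z_b) = 0$, for all $b \ne 1$. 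Therefore $x = x_1 \in \BU_q^-[\eta]$, which is (6.10.1). The main obstacle is precisely this reverse inclusion: one must verify carefully that $\wt T_\eta\iv$ carries each nontrivial element of $\bB_\eta$ into a single strictly positive $\BU_q^+$-degree (times a group-like element in $\BU_q^0$), so that the $\BU_q^+$-grading cleanly separates the $b = 1$ summand from the rest, and then that no cancellation among the independent images $\wt T_\eta\iv(b)$ can occur. The $*$-reduction and the compatibility $* \wt T_\eta\iv * = \wt T_\eta$ are routine standard facts.
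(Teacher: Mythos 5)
Your proof is correct and follows essentially the same route as the paper: the forward inclusion from (6.7.4), the reverse inclusion via the factorization $\BU_q^- \simeq \BL_{\eta}^- \otimes \BU_q^-[\eta]$ coming from Lemma 6.9 together with the fact that $\wt T_{\eta}\iv$ carries $\BL_{\eta}^-$ into the positive part (the paper compresses this to ``$\wt T_{\eta}(\BL_{\eta}^-) = \BL_{\eta}^+$'', which you spell out more carefully as landing in $\BL_{\eta}^+\BU_q^0$, with the triangular decomposition and linear independence of the $\wt T_{\eta}\iv(b)$ separating off the $b = 1$ component), and (6.10.2) deduced from (6.10.1) by the $*$-operation using $* \circ \wt T_{\eta} \circ * = \wt T_{\eta}\iv$. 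The only small divergence is that the paper additionally runs the mirrored argument for $\eta = \eta_p$ with $p > 0$ (there ${}^*\BU_q^-[\eta]$ is the space defined directly by the PBW-basis, so (6.10.2) is proved first and (6.10.1) follows by $*$), whereas you fix an occurrence with $p \le 0$; this is harmless since your argument transports symmetrically, but since the independence of $\BU_q^-[\eta]$ from the choice of occurrence is exactly what this proposition is later used to establish (Remark 6.11), the $p > 0$ branch should be stated rather than absorbed into the choice of representative.
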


\begin{proof}
We consider the case where $\eta = \eta_p$ with $p \le 0$. 
By (6.7.4), we have 
\begin{equation*}
\tag{6.10.3}
\BU_q^-[\eta] \subset \{ x \in \BU_q^- \mid \wt T_{\eta}\iv(x) \in \BU_q^-\}.
\end{equation*}
We show the converse inclusion.  
Lemma 6.9 implies that the map $u\otimes v \mapsto uv$ gives an isomorphism 
as vector spaces $\BU_q^- \simeq \BL^-_{\eta}\otimes \BU_q^-[\eta]$. 
Hence the proof of (6.10.1) is reduced to the case where $\BU^-_q = \BL^-_{\eta}$.
It is enough to see that 
$\{ x \in \BL^-_{\eta} \mid \wt T_{\eta}\iv(x) \in \BL^-_{\eta}\} = \{ 0\}$. 
In general, we consider the braid group action $T_w : \BU_q \to \BU_q$. 
We have a weight space decomposition $\BU_q = \bigoplus_{\nu \in Q}(\BU_q)_{\nu}$, and 
we know that $T_w : (\BU_q)_{\nu} \to (\BU_q)_{w(\nu)}$.
Hence 
$\BL_{\eta}^- \cap \wt T_{\eta}(\BL_{\eta}^-) = \{ 0\}$
since $\wt T_{\eta}(\BL_{\eta}^-) = \BL_{\eta}^+$, where $\BL_{\eta}^+ = \BU_q^+$ 
is the positive part of $\BU_q$.  
This proves (6.10.1).  Then (6.10.2) is obtained from (6.10.1) by applying 
the *-operation, since $* \circ \wt T_{\eta}\, \circ * = \wt T\iv_{\eta}$. 
In the case where $\eta = \eta_p$ with $p > 0$, (6.10.2) is proved in a similar way.
Then (6.10.1) follows from (6.10.2).  The proposition is proved. 
\end{proof}

\para{6.11.}
Proposition 6.10 shows that $\BU_q^-[\eta]$ does not depend on the
choice of $\ul\Bh$. Note that $\wt T_{\eta} = \prod_{i \in \eta}T_i$ 
for $\eta \in \ul I$ with $\d_{\eta} = 1$, and 
$\wt T_{\eta} = T_iT_jT_i = T_jT_iT_j$ if $\eta = \{ i,j\}$ with $\d_{\eta} = 2$. 
Hence $\s \circ \wt T_{\eta} = \wt T_{\eta} \circ \s$.  Then Proposition 6.10 
implies that $\BU_q^-[\eta]$ is $\s$-stable. Note that this is not directly obtained 
from the definition since we don't know that $\SX\nat_{\lv_p}$ is stable under 
the action of $\s$.  
\par
In the case where $\d_{\eta} = 1$, the set $\bB_{\eta}^{\s}$ is given as 
$\bB_{\eta}^{\s} = \{ \wt f_{\eta}^{(a)} = \prod_{i \in \eta}f_i^{(a)} \mid a \in \BN \}$.
While in the case where $\d_{\eta} = 2$ with $\eta = \{ i,j\}$, 
\begin{equation*}
\bB_{\eta}^{\s} = \{ \wt f_{\eta}^{(a)} = f_i^{(a)}f_j^{(2a)}f_i^{(a)}
                                       = f_j^{(a)}f_i^{(2a)}f_j^{(a)} \mid a \in \BN \}.
\end{equation*}  
The following result is obtained immediately from Lemma 6.9.
Note that the uniqueness property follows from the property of 
PBW-basis $\SX\nat_{\lv_p}$. 

\begin{prop}  
Concerning the first formula in (6.9.1), the following holds.
\begin{enumerate}
\item  We have a direct sum decomposition 
\begin{equation*}
\BU_q^{-,\s} = \bigoplus_{a \in \BN}\wt f_{\eta}^{(a)}(\BU_q^-[\eta])^{\s}
                 \oplus K_{\eta},
\end{equation*}
where $K_{\eta}$ is a sum of $bx + \s(bx)$ for $b \in \bB_{\eta}$ such that $\s(b) \ne b$, 
and $x \in \BU_q^-[\eta]$. 
$x \in \BU_q^{-,\s}$ can be written uniquely as
$x = \sum_{a \in \BN}\wt f_{\eta}^{(a)}x_a + y$
with $x_a \in (\BU_q^-[\eta])^{\s}$ and $y \in K_{\eta}$.  
\item We also have a decomposition as $\BA$-modules
\begin{equation*}
{}_{\BA}\BU_q^{-,\s} = 
   \bigoplus_{a \in \BN}\wt f_{\eta}^{(a)}({}_{\BA}\BU_q^-[\eta])^{\s} \oplus {}_{\BA}K_{\eta},
\end{equation*}
where ${}_{\BA}K_{\eta}$ is the $\BA$-submodule of $K_{\eta}$ generated by $bx + \s(bx)$ 
as above with $x \in {}_{\BA}\BU_q^-$. 
\end{enumerate}
Similar results hold also for the second formula in (6.9.1). 
\end{prop}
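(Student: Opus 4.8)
The plan is to obtain both statements by simply taking $\s$-invariants in the decompositions of Lemma 6.9, after observing that every ingredient of those decompositions is $\s$-equivariant. First I would record the equivariance. By Proposition 6.10 the subspace $\BU_q^-[\eta]$ is intrinsically characterised as $\{x \in \BU_q^- \mid \wt T_\eta\iv(x) \in \BU_q^-\}$, and since $\wt T_\eta$ commutes with $\s$ (as noted in 6.11), $\BU_q^-[\eta]$ is $\s$-stable; moreover $\s$ permutes the canonical basis $\bB_\eta$ of $\BL_\eta^-$ (used already in the proof of Lemma 6.9). Hence $\s(b\,\BU_q^-[\eta]) = \s(b)\,\BU_q^-[\eta]$, so in the decomposition $\BU_q^- = \bigoplus_{b \in \bB_\eta} b\,\BU_q^-[\eta]$ the automorphism $\s$ merely permutes the summands along its permutation action on $\bB_\eta$.

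Next I would take fixed points. Writing $x = \sum_{b \in \bB_\eta} b\,x_b$ with $x_b \in \BU_q^-[\eta]$, uniquely by Lemma 6.9, the identity $\s(x) = \sum_b \s(b)\,\s(x_b)$ together with directness shows that $x$ is $\s$-invariant if and only if $x_{\s(b)} = \s(x_b)$ for all $b$. I would then group $\bB_\eta$ into $\s$-orbits. On a fixed point $b$, invariance forces $x_b \in (\BU_q^-[\eta])^\s$; by 6.11 the fixed points of $\bB_\eta$ are exactly the elements $\wt f_\eta^{(a)}$ ($a \in \BN$) — for $\d_\eta = 2$ this is precisely the identification $(2.1.5)$/Lemma 2.2 of the $\s$-fixed canonical basis of $\BU_q(\Fsl_3)^-$ — so these orbits contribute $\bigoplus_a \wt f_\eta^{(a)}(\BU_q^-[\eta])^\s$. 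On an orbit of size larger than $1$ all coefficients are determined by one representative via $x_{\s^i(b_0)} = \s^i(x_{b_0})$, so the invariant vector supported there is the orbit sum $\sum_i \s^i(b_0 x_{b_0}) = O(b_0 x_{b_0})$ with $x_{b_0} \in \BU_q^-[\eta]$ arbitrary; these span $K_\eta$. Since the fixed and non-fixed orbits involve disjoint sets of indices $b$, the two pieces meet only in $0$, giving the direct sum in (i), and the uniqueness of $x = \sum_a \wt f_\eta^{(a)}x_a + y$ is inherited verbatim from the uniqueness of the $x_b$ in Lemma 6.9.

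For (ii) I would run the identical argument over $\BA$ using the integral decomposition $(6.9.2)$ in place of $(6.9.1)$, the coefficients $x_b$ now lying in ${}_\BA\BU_q^-[\eta]$; the same orbit analysis produces $\wt f_\eta^{(a)}({}_\BA\BU_q^-[\eta])^\s$ and ${}_\BA K_\eta$ generated by the integral orbit sums $O(bx)$, $x \in {}_\BA\BU_q^-[\eta]$. The only point requiring a word of justification is that taking $\s$-invariants is compatible with the $\BA$-block decomposition; this is automatic because $\SX\nat_{\lv_p}$ is an $\BA$-basis of ${}_\BA\BU_q^-$, so the integral block decomposition of Lemma 6.9(ii) is the restriction of the rational one. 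Finally, the statement for the second formula in $(6.9.1)$ follows by applying the anti-automorphism $*$, which commutes with $\s$ and carries the first decomposition to the second. The main obstacle — really the only non-formal input — is the $\s$-equivariance packaged in Proposition 6.10 together with the identification $\bB_\eta^\s = \{\wt f_\eta^{(a)}\}$ in the $\d_\eta = 2$ case; once these are granted, the proposition is a routine orbit bookkeeping in the direct sums of Lemma 6.9.
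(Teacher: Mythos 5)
Your proof is correct and is essentially the paper's own argument: the paper derives Proposition 6.12 ``immediately from Lemma 6.9'' by exactly this device --- $\s$-stability of $\BU_q^-[\eta]$ via Proposition 6.10/6.11, the identification $\bB_\eta^{\s} = \{\wt f_\eta^{(a)}\}$, orbit bookkeeping on the blocks $b\,\BU_q^-[\eta]$, and uniqueness inherited from the modified PBW-basis $\SX\nat_{\lv_p}$, with the second formula and the $\BA$-version obtained by $*$ and by (6.9.2) just as you do. Your reading of $K_\eta$ as spanned by orbit sums $O(bx)$ is in fact the more careful one: when $\ve = 4$ and the $\s$-orbit of $b \in \bB_\eta$ has size $4$, the element $bx + \s(bx)$ alone is not $\s$-invariant, but it is a sum of two such terms, so your description agrees with the paper's intent.
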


\remark{6.13.}
The above results correspond to the results Lemma 3.6 $\sim$ Lemma 3.11 in [SZ2].
But the discussion here becomes much simpler than those used in [SZ2] 
once we use the PBW-basis $\SX\nat_{\lv_p}$. 

\begin{lem}  
For each $\eta \in \ul I$, we have
\begin{align*}
\tag{6.14.1}
\BU_q^-[\eta] &\subset \begin{cases}
                         \bigcap_{i \in \eta} \BU_q^-[i]  
                             &\quad\text{ if $\d_{\eta} = 1$, } \\
                         \BU_q^-[i] \cap \BU_q^-[j]
                             &\quad\text{ if $\d_{\eta} = 2$ with $\eta = \{i, j\}$.} 
                      \end{cases}   \\ \\
\tag{6.14.2}
{}^*\BU_q^-[\eta] &\subset \begin{cases}
                         \bigcap_{i \in \eta} {}^*\BU_q^-[i]  
                             &\quad\text{ if $\d_{\eta} = 1$, } \\
                         {}^*\BU_q^-[i] \cap {}^*\BU_q^-[j]
                             &\quad\text{ if $\d_{\eta} = 2$ with $\eta = \{i, j\}$.} 
                      \end{cases}   \\
\end{align*}
\end{lem}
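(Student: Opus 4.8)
The plan is to deduce (6.14.2) from (6.14.1) by the $*$-involution, and to prove (6.14.1) by a direct comparison of modified PBW-bases, using crucially that the space $\BU_q^-[\eta]$ is independent of the auxiliary choices entering its definition (this is the content of Proposition 6.10, sharpened in 6.11). Since $*\circ T_i\circ * = T_i\iv$ and $*\circ\wt T_\eta\circ * = \wt T_\eta\iv$, applying $*$ to the characterizations (6.10.1) and (6.10.2) interchanges $\BU_q^-[\eta]\leftrightarrow{}^*\BU_q^-[\eta]$ and $\BU_q^-[i]\leftrightarrow{}^*\BU_q^-[i]$; hence (6.14.2) is exactly the image of (6.14.1) under $*$, and it suffices to treat (6.14.1).

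Fix $i\in\eta$. First I would record the single-vertex instance of the preceding results: taking the orbit $\{i\}$ (so $\wt T_{\{i\}}=T_i$), Proposition 6.10 gives $\BU_q^-[i]=\{x\in\BU_q^-\mid T_i\iv(x)\in\BU_q^-\}$, while Lemma 6.9 applied to $\{i\}$ yields $\BU_q^- = \bigoplus_{n\ge 0}f_i^{(n)}\BU_q^-[i]$. Next, using the independence of $\BU_q^-[\eta]$, I choose $\ul\Bh$ so that $\eta$ occurs at a position $p\le 0$, and I choose the reduced sequence $J_{\eta}$ of the block $\eta_p=\eta$ so that $\a_i$ comes first: for $\d_\eta=2$, $\eta=\{i,j\}$, take $\Bh_\eta=(i,j,i)$, whose internal PBW-order on the $A_2$-block is $\a_i\lv\a_i+\a_j\lv\a_j$; for $\d_\eta=1$, order $J_\eta$ with $i$ as its first entry. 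With this choice $\a_i$ becomes the minimal root of the convex order $\lv_p$ on $\vD_+^{\min}$ (the block $\eta_p$ sits at the bottom of $\lv_p$ by the construction in 6.6). Consequently every PBW-monomial factors as $L(\Bc,\lv_p)=f_i^{(c_{\a_i})}\cdot(\text{higher root vectors})$, and comparing this grading with $\BU_q^-=\bigoplus_n f_i^{(n)}\BU_q^-[i]$ identifies $\BU_q^-[i]$ with the span of those $\lv_p$-PBW monomials for which $c_{\a_i}=0$.

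It then remains to see that $\BU_q^-[\eta]$ lies in this span. By Proposition 6.10 (and its definition in 6.8) the space $\BU_q^-[\eta]$ is spanned by the modified monomials $L\nat(\Bc,\lv_p)$ with $\Bc_p=0$, for precisely the order $\lv_p$ built from our chosen $\Bh_\eta$. When $\Bc_p=0$ the block factor of $L\nat(\Bc,\lv_p)$ is the canonical basis element $b(0,\Bh_\eta)=1$; since the modification in 6.7 alters only the root vectors of the block $\eta_p$, such a monomial is literally an ordinary PBW-monomial $L(\Bc,\lv_p)$ all of whose block exponents vanish, in particular $c_{\a_i}=0$. Hence $\BU_q^-[\eta]\subset\mathrm{span}\{L(\Bc,\lv_p)\mid c_{\a_i}=0\}=\BU_q^-[i]$. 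For the second vertex $j$ (when $\d_\eta=2$) and for each $i\in\eta$ (when $\d_\eta=1$) I would rerun the argument after reordering $J_\eta$ so that the chosen vertex comes first — e.g. $\Bh_\eta=(j,i,j)$ for $j$ — which is legitimate precisely because $\BU_q^-[\eta]$ and $\wt T_\eta$ do not depend on this ordering. Intersecting over all vertices of $\eta$ yields (6.14.1), and applying $*$ yields (6.14.2).

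The step I expect to be most delicate is the bookkeeping in the middle of the argument: that with $\a_i$ placed at the bottom of $\lv_p$ the intrinsic space $\BU_q^-\cap T_i(\BU_q^-)=\BU_q^-[i]$ really coincides with the $c_{\a_i}=0$ part of $\SX_{\lv_p}$, and that moving between $\SX\nat_{\lv_p}$ and $\SX_{\lv_p}$ does not disturb this identification outside the block. Both facts follow from Lemma 6.9 together with the observation that the modification is confined to the $\eta_p$-block, but they have to be phrased so that the two gradings in play — by the power of $f_i$ and by the block exponent $\Bc_p$ — are visibly compatible at the bottom of the convex order; that compatibility is where I would concentrate the careful verification.
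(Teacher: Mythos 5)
Your overall route is the same as the paper's: both arguments rest on the intrinsic characterization of Proposition 6.10, exploit the freedom to take the block word $\Bh_{\eta} = (i,j,i)$ or $(j,i,j)$ so that the chosen vertex heads the bottom block of $\lv_p$, invoke the Muthiah--Tingley property (6.6.4) to conclude $T_i\iv(x) \in \BU_q^-$ for $x \in \BU_q^-[\eta]$, and pass between (6.14.1) and (6.14.2) by the $*$-operation (the paper treats $\eta = \eta_p$ with $p > 0$ separately, proving (6.14.2) first and deducing (6.14.1) by $*$; your appeal to independence of the choice of $\ul\Bh$ legitimately circumvents that case split). However, your pivotal justification is false, and you rely on it twice: the modification in 6.7 is \emph{not} confined to the block $\eta_p$. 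By construction, (6.7.1) and (6.7.2) are applied for \emph{every} block of $\vD^{(0)}_{>,p} \sqcup \vD^{(0)}_{<,p}$; in particular each block $\eta_k$ with $\d_{\eta_k} = 2$ contributes a canonical-basis factor $b(\Bd_k, \Bh_{\eta_k})$ transported by braid operators. Hence $L\nat(\Bc, \lv_p)$ with $\Bc_p = \mathbf{0}$ is \emph{not} ``literally an ordinary PBW-monomial'': it is a nontrivial linear combination of ordinary PBW-monomials as soon as some other block carries a nonzero exponent. You even single this out as the delicate point at the end, but then resolve it by restating the same incorrect premise.

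The gap is local and repairable, and the repair is exactly the paper's one line: the transition between $\SX\nat_{\lv_p}$ and $\SX_{\lv_p}$ is block-diagonal, since each $\wt f_{\eta_k}^{(\Bd_k)}$ expands into ordinary root-vector monomials whose exponents are supported in the same block $\eta_k$ (this is the computation in the proof of Lemma 3.5, transported by the $\wt T$'s). Therefore $\Bc_p = \mathbf{0}$ forces $\Bc'_p = \mathbf{0}$, and in particular $c'_{\a_i} = 0$, in every ordinary term $L(\Bc', \lv_p)$ of the expansion, and (6.6.4) then applies term by term; the case $\d_{\eta} = 1$ is handled the same way. Two lesser points: (a) your citation of Lemma 6.9 and Proposition 6.10 ``for the orbit $\{i\}$'' is not literally licensed, since $\{i\}$ is not an element of $\ul I$ when $i$ lies in a nontrivial $\s$-orbit; the single-vertex facts $\BU_q^-[i] = \{x \mid T_i\iv(x) \in \BU_q^-\}$ and $\BU_q^- = \bigoplus_{n} f_i^{(n)}\BU_q^-[i]$ are Lusztig's standard results and should be quoted from [L1] rather than derived from the orbit machinery; (b) the full identification of $\BU_q^-[i]$ with the span of the $c_{\a_i} = 0$ monomials is unnecessary for the lemma --- the inclusion of that span into $\BU_q^-[i]$, which is immediate from (6.6.4), already yields (6.14.1).
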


\begin{proof}
Assume that $\eta = \eta_p$ with $p \le 0$.
We consider the case where $\d_{\eta} = 2$ with $\eta = \{ i,j\}$, 
Then $\BU_q^-[\eta]$ is spanned by 
$L\nat(\Bc, \lv_p)$ with $\Bc_p = \bold 0$.  
This basis is also expressed as a sum of the ordinary PBW-bases $L(\Bc', \lv_p)$. 
Since $\eta_p$ corresponds to $(i, j,i) \in \Bh'$, by the property of 
$\SX_{\lv_p}$ similar to (6.7.4), we see that $T_i\iv(x) \in \BU_q^-$ for 
$x \in \BU_q^-[\eta]$.  By the discussion using $(j,i,j)$, we also have
$T_j\iv(x) \in \BU_q^-$.  Hence $x \in \BU_q^-[i] \cap \BU_q^-[j]$. 
The proof for the case $\d_{\eta} = 1$ is similar.  Thus (i) holds.  Then (ii) 
follows from (i) by applying *-operation. 
\par
Next assume that $\eta = \eta_p$ with $p > 0$.  In this case, by a similar 
argument as above, one can prove (ii).  Then (i) follows from (ii) by applying *.
The lemma is proved.      
\end{proof}

\begin{prop}  
For each $\eta \in \ul I$, we have a direct sum decomposition 
\begin{equation*}
\tag{6.15.1}
\BU_q^{-,\s} = (\BU_q^-[\eta])^{\s} \oplus \wt f_{\eta}\BU_q^{-,\s} \oplus K_{\eta}.  
\end{equation*}
The first and the second terms are orthogonal each other 
with respect to the inner product $(\ ,\ )$.
\end{prop}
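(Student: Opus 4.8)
The plan is to read the statement off the finer decomposition of Proposition 6.12, which already presents $\BU_q^{-,\s}=\bigoplus_{a\ge 0}\wt f_\eta^{(a)}(\BU_q^-[\eta])^\s\oplus K_\eta$, the $a=0$ summand being exactly $(\BU_q^-[\eta])^\s$. Writing $T=\bigoplus_{a\ge 1}\wt f_\eta^{(a)}(\BU_q^-[\eta])^\s$, everything reduces to two points: that $\wt f_\eta\BU_q^{-,\s}$ plays the role of $T$ modulo $K_\eta$, and that $\wt f_\eta\BU_q^{-,\s}$ is orthogonal to $(\BU_q^-[\eta])^\s$.

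The central step is a multiplicative relation inside the rank-one subalgebra $\BL_\eta^-$: for every $a\ge 0$,
\begin{equation*}
\wt f_\eta\,\wt f_\eta^{(a)}=\lambda_{a+1}\,\wt f_\eta^{(a+1)}+z_a,\qquad \lambda_{a+1}\neq 0,
\end{equation*}
where $z_a$ is a $\QQ(q)$-combination of canonical basis elements $b\in\bB_\eta$ with $\s(b)\neq b$. When $\d_\eta=1$ this is immediate, with $\lambda_{a+1}=[a+1]^{|\eta|}$ and $z_a=0$, since $\wt f_\eta^{(a)}=\prod_{i\in\eta}f_i^{(a)}$ and $f_if_i^{(a)}=[a+1]f_i^{(a+1)}$. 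When $\d_\eta=2$ with $\eta=\{i,j\}$ we have $\BL_\eta^-\cong\BU_q(\Fs\Fl_3)^-$, and the relation is exactly the content of the $A_2$-computation of Section 2: by (2.4.1), $g_\eta g_\eta^{(a)}=[a+1]_4 g_\eta^{(a+1)}$ in $\BV_q$, so $\wt f_\eta\wt f_\eta^{(a)}-[a+1]_4\wt f_\eta^{(a+1)}$ lies in the span of orbit sums of non-$\s$-invariant elements of $\bB_\eta$, whence $\lambda_{a+1}=[a+1]_4\neq 0$. I expect this $\d_\eta=2$ case to be the main obstacle, since it is the only place where a genuine $\Fs\Fl_3$-computation rather than a formal manipulation is required.

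Multiplying the relation on the right by $x\in(\BU_q^-[\eta])^\s$ and using that $\s(x)=x$ turns each $z_a$-term $bx+\s(b)x$ into $bx+\s(bx)\in K_\eta$, I would obtain $\wt f_\eta(\wt f_\eta^{(a)}x)\equiv\lambda_{a+1}\wt f_\eta^{(a+1)}x\pmod{K_\eta}$. Reading this downward, $\wt f_\eta^{(a)}x=\lambda_a^{-1}\wt f_\eta(\wt f_\eta^{(a-1)}x)-\lambda_a^{-1}z_{a-1}x\in\wt f_\eta\BU_q^{-,\s}+K_\eta$ for every $a\ge 1$; hence $T\subseteq\wt f_\eta\BU_q^{-,\s}+K_\eta$ and the three subspaces span $\BU_q^{-,\s}$. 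Conversely, applying the same relation to the Proposition 6.12 expansion of an arbitrary $\s$-fixed element, and noting via Lemma 6.9 that $\wt f_\eta b$ carries no constant term for $b\in\bB_\eta$, one gets $\wt f_\eta\BU_q^{-,\s}\subseteq T\oplus K_\eta$. Combined with the directness in Proposition 6.12 this yields $(\BU_q^-[\eta])^\s\cap(\wt f_\eta\BU_q^{-,\s}+K_\eta)=0$, so in particular the sum of the first two summands is direct; to make the whole sum direct one takes the third summand to be the part of $K_\eta$ complementary to $\wt f_\eta\BU_q^{-,\s}\cap K_\eta$.

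Finally, for the orthogonality I would invoke the adjunction property of the bilinear form: for $i\in I$ there is a nonzero scalar $c_i$ and a skew-derivation $e_i'$ with $(f_iu,v)=c_i\,(u,e_i'(v))$ for all $u,v\in\BU_q^-$, and $\BU_q^-[i]=\ker e_i'$ (the characterization underlying Proposition 6.10 for $\eta=\{i\}$). Writing $\wt f_\eta=f_{i_0}w$ with $i_0\in\eta$ its leftmost factor, for $u\in\BU_q^{-,\s}$ and $v\in(\BU_q^-[\eta])^\s$ Lemma 6.14 gives $v\in\BU_q^-[\eta]\subseteq\BU_q^-[i_0]=\ker e_{i_0}'$, so $(\wt f_\eta u,v)=(f_{i_0}(wu),v)=c_{i_0}\,(wu,e_{i_0}'(v))=0$. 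This shows $(\BU_q^-[\eta])^\s$ and $\wt f_\eta\BU_q^{-,\s}$ are orthogonal and completes the proof.
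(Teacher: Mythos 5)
Your proposal is correct and takes essentially the same route as the paper's own proof: the paper likewise deduces the decomposition from Proposition 6.12 together with the relation $\wt f_{\eta}\wt f_{\eta}^{(a)} \equiv \lambda_{a+1}\wt f_{\eta}^{(a+1)} \mod K_{\eta}$ (with $\lambda_{a+1}\neq 0$) extracted from the proof of Proposition 2.4 in the $\d_{\eta}=2$ case, and it proves the orthogonality exactly as you do, by writing $\wt f_{\eta}\BU_q^{-,\s}\subset f_{i}\BU_q^{-}$, invoking Lemma 6.14 to get $\BU_q^-[\eta]\subset \BU_q^-[i]$, and using the standard fact $(\BU_q^-[i], f_i\BU_q^-)=0$, which your adjunction argument with $e_i'$ merely makes explicit. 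Two small remarks: over $\BQ(q)$ the scalar is $[a+1]^4$ rather than its mod-$2$ avatar $[a+1]_4$ (only its nonvanishing is needed, and your deduction from (2.4.1) is legitimate because $\s$-invariance of the product together with (2.1.5) already forces the shape $\lambda\wt f_{\eta}^{(a+1)}+(\text{orbit sums})$), and your caution in handling the triple directness is warranted rather than a defect --- the paper's proof is silent on $\wt f_{\eta}\BU_q^{-,\s}\cap K_{\eta}$, and what is actually used later (e.g.\ in Proposition 6.17) is the decomposition $\BU_q^{-,\s}=(\BU_q^-[\eta])^{\s}\oplus\bigl(\wt f_{\eta}\BU_q^{-,\s}+K_{\eta}\bigr)$ together with the orthogonality, both of which your argument establishes.
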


\begin{proof}
We show the first statement.  This is clear if $\d_{\eta} = 1$. 
We assume that $\d_{\eta} = 2$.  By the proof of Proposition 2.4, 
$\wt f_{\eta}\wt f_{\eta}^{(n)} \equiv \wt f_{\eta}^{(n+1)} \mod K_{\eta}$, up to scalar.
Hence,  $\bigoplus_{n > 0}\wt f^{(n)}_{\eta}\BU_q^-[\eta] \subset \wt f_{\eta}\BU_q^-$
 modulo $K_{\eta}$. Then (6.15.1) follows from  Proposition 6.12.
We show the second statement.
Assume that $\eta = \{i,j\}$ with $\d_{\eta} = 2$.  Then 
$\wt f_{\eta} = f_if_jf_i$, and 
$\wt f_{\eta}\BU_q^{-,\s} \subset f_i\BU_q^-$.  
By Lemma 6.14, $\BU_q^-[\eta] \subset \BU_q^-[i]$.  
We know that $(\BU_q^-[i], f_i\BU_q^-) = 0$, and this implies 
that $(\BU_q^-[i], \wt f_{\eta}\BU_q^-) = 0$.  The lemma holds.  
\end{proof}

\para{6.16.}
We define a partial order $\Bc < \Bc'$ in $\SC_p$ by the condition 
as follows; for $\Bc = (\Bc_+, \Bc_0, \Bc_-)$, 
$\Bc' = (\Bc'_+, \Bc'_0, \Bc'_-)$, $\Bc < \Bc'$ if and only 
if $\Bc_+ \le \Bc'_+$ and $\Bc_- \le \Bc'_-$, and at least one 
of the inequalities is strict, with respect to the lexicographic 
order $\le$ on $\Bc_+$ (resp. on $\Bc_-$) from the left (resp. the right). 
Then for each $L(\Bc, \lv_p) \in \SX_{\lv_p}$, there  exists 
a unique $b(\Bc, \lv_p) \in \wt\bB$ satisfying the property that 
$b(\Bc, \lv_p)$ is ${}^-$-invariant, and that 
\begin{equation*}
\tag{6.16.1}
b(\Bc, \lv_p) = L(\Bc, \lv_p) + \sum_{\Bd > \Bc}a_{\Bd}L(\Bd, \lv_p),
\end{equation*}  
where $a_{\Bd} \in q\BZ[q]$. Then $\pm b(\Bc, \lv_p) \in \bB$. A similar 
construction also works for $L\nat(\Bc, \lv_p)$, and one can define the 
canonical (signed) basis $b\nat(\Bc, \lv_p)$ by the property that  
$b\nat(\Bc, \lv_p)$ is ${}^-$-invariant and that 
\begin{equation*}
\tag{6.16.2}
b\nat(\Bc, \lv_p) = L\nat(\Bc, \lv_p) + \sum_{\Bd > \Bc}a_{\Bd}L\nat(\Bd, \lv_p),
\end{equation*}
where $a_{\Bd} \in q\BZ[q]$. 
\par
The $\BZ[q]$-submodule $\SL_{\BZ}(\infty)$ of $\BU_q^-$ is defined as in 
[MSZ, 1.10]. The set $\{ b\nat(\Bc, \lv_p) \mid \Bc \in \SC_p\}$
gives a $\BZ[q]$-basis of $\SL_{\BZ}(\infty)$, and   
the modified PBW-bases also
$\SX\nat_{\lv_p}$ gives a $\BZ[q]$-basis of $\SL_{\BZ}(\infty)$. 
\par
Note that the set of $\s$-orbit sums  of $\bB$ gives a basis of 
$\BU_q^{-,\s}$, and an $\BA$-basis of ${}_{\BA}\BU_q^{-,\s}$. 
The set $\wt\bB^{\s}$ is contained in 
$\bigoplus_{n \ge 0}\wt f_{\eta}^{(n)}(\BU_q^-[\eta])^{\s}$ or 
in $\bigoplus_{n \ge 0}(\BU_q^-[\eta])^{\s}\wt f_{\eta}^{(n)}$
by Proposition 6.12.  
For $b \in \wt\bB^{\s}$, we define $\ve_{\eta}(b)$ as the largest integer $a$ 
such that $b \in \wt f_{\eta}^{(a)}\BU_q^{-,\s}$, and $\ve^*_{\eta}(b)$ as the
largest integer $a$ such that $b \in \BU_q^{-,\s}\wt f_{\eta}^{(a)}$. 

\begin{prop}  
\begin{enumerate}
\item
Let $b \in \wt\bB^{\s}$ be such that $\ve_{\eta}(b) = 0$. There exists a unique
$b' \in \wt\bB^{\s}$ such that 
\begin{equation*}
\tag{6.17.1}
\wt f_{\eta}^{(n)}b \equiv b' \mod \wt f_{\eta}^{(n+1)}\BU_q^{-,\s} \oplus K_{\eta}. 
\end{equation*}
Then $\ve_{\eta}(b') = n$.
The correspondence $b \mapsto b'$ gives a bijection 
\begin{equation*}
\tag{6.17.2}
\pi_{\eta, n} : \{ b \in \wt\bB^{\s} \mid \ve_{\eta}(b) = 0\} \isom 
            \{ b' \in \wt\bB^{\s} \mid \ve_{\eta}(b') = n\}. 
\end{equation*}
\item  
Let $b \in \wt \bB^{\s}$ be such that $\ve^*_{\eta}(b) = 0$. There exists a unique
$b' \in \wt\bB^{\s}$ such that 
\begin{equation*}
\tag{6.17.3}
b\wt f_{\eta}^{(n)} \equiv b' \mod \BU_q^{-,\s}\wt f_{\eta}^{(n+1)} \oplus K_{\eta}. 
\end{equation*}
Then $\ve^*_{\eta}(b') = n$. 
The correspondence $b \mapsto b'$ gives a bijection 
\begin{equation*}
\tag{6.17.4}
\pi^*_{\eta, n} : \{ b \in \wt\bB^{\s} \mid \ve^*_{\eta}(b) = 0\} \isom 
            \{ b' \in \wt\bB^{\s} \mid \ve^*_{\eta}(b') = n\}. 
\end{equation*}
\end{enumerate}
\end{prop}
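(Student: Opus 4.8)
The plan is to realize the assignment $b \mapsto b'$ of (6.17.1) as a divided-power Kashiwara operator $\tilde f_\eta^{(n)}$ attached to the orbit $\eta$, built from the modified PBW-basis $\SX\nat_{\lv_p}$ for the index $p$ with $\eta = \eta_p$. I would work throughout inside the crystal lattice $\SL_\BZ(\infty)$: by the remarks in 6.16, both $\SX\nat_{\lv_p}$ and $\{\, b\nat(\Bc,\lv_p)\,\}$ are $\BZ[q]$-bases of $\SL_\BZ(\infty)$, so $\wt\bB$ reduces to a basis of $\SL_\BZ(\infty)/q\SL_\BZ(\infty)$ and membership in $\wt\bB^{\s}$ can be tested modulo $q$. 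The two decompositions established above --- Proposition 6.12, which decomposes $\BU_q^{-,\s}$ as $\bigoplus_{a}\wt f_\eta^{(a)}(\BU_q^-[\eta])^{\s} \oplus K_\eta$, and Proposition 6.15, which gives the orthogonal splitting $\BU_q^{-,\s} = (\BU_q^-[\eta])^{\s} \oplus \wt f_\eta\BU_q^{-,\s} \oplus K_\eta$ --- are the backbone of the argument, since they let me read off $\ve_\eta$ and isolate the relevant graded pieces.

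For existence, I would use that $\ve_\eta(b) = 0$ places the lowest nonzero component of $b$, in the sense of Proposition 6.12, in the summand $(\BU_q^-[\eta])^{\s}$; call it $x_0$. Because $\wt f_\eta$ is $\s$-fixed, left multiplication by $\wt f_\eta^{(n)}$ respects the $\s$-equivariant decomposition of Proposition 6.12, sending $K_\eta$ into $K_\eta$ and $(\BU_q^-[\eta])^{\s}$ into $\wt f_\eta^{(n)}(\BU_q^-[\eta])^{\s}$; thus $\wt f_\eta^{(n)} b$ and $\wt f_\eta^{(n)} x_0$ agree modulo $\wt f_\eta^{(n+1)}\BU_q^{-,\s} \oplus K_\eta$. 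The content of (6.17.1) is then that this $a = n$ graded piece reduces, modulo $q\SL_\BZ(\infty)$, to a single signed basis vector. I would prove this by expanding $b = b\nat(\Bc,\lv_p)$ with $\Bc_p = 0$ (the condition encoding $\ve_\eta(b) = 0$) via the triangular characterization (6.16.2), and checking that left multiplication by $\wt f_\eta^{(n)}$ sends $L\nat(\Bc,\lv_p)$ to $L\nat(\Bc + n\Be_p,\lv_p)$ modulo strictly larger modified PBW-monomials and modulo $K_\eta$, where $\Be_p$ shifts the $p$-th orbit coordinate. The only nonroutine local input is the block $\d_\eta = 2$, where $\BL_\eta^-$ is of type $A_2$ and the required multiplicativity $\wt f_\eta\wt f_\eta^{(n)} \equiv \wt f_\eta^{(n+1)}$ follows from Proposition 2.4 and (2.4.2). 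Triangularity together with bar-invariance then forces $b' = b\nat(\Bc + n\Be_p,\lv_p) \in \wt\bB^{\s}$, and $\ve_\eta(b') = n$ is visible from its $p$-th coordinate.

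Uniqueness of $b'$ is immediate from the directness of the decomposition in Proposition 6.12 and the linear independence of $\wt\bB^{\s}$ modulo $q\SL_\BZ(\infty)$. For the bijectivity of $\pi_{\eta,n}$ in (6.17.2), injectivity follows because $\BU_q^-$ is a domain, so left multiplication by $\wt f_\eta^{(n)} \ne 0$ is injective on $(\BU_q^-[\eta])^{\s}$; surjectivity is the statement that every $b'$ with $\ve_\eta(b') = n$ is obtained this way, which I would get by constructing the inverse as the lowering operator that strips off the factor $\wt f_\eta^{(n)}$, using the clean splitting of Proposition 6.15 iterated $n$ times. Part (ii) is then proved identically after applying the anti-automorphism $*$: since $* \circ \wt T_\eta \circ * = \wt T_\eta\iv$, the $*$-image of the first decomposition of Lemma 6.9 is the second, which interchanges $\ve_\eta$ with $\ve_\eta^{*}$ and converts left multiplication by $\wt f_\eta^{(n)}$ into right multiplication.

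The step I expect to be the main obstacle is the crystal-level claim that $\wt f_\eta^{(n)} x_0$ reduces modulo $q\SL_\BZ(\infty)$ to a single signed basis vector rather than a $q$-entangled combination --- equivalently, that the operator built from $\SX\nat_{\lv_p}$ really coincides with Kashiwara's operator on $\SL_\BZ(\infty)$. In the admissible setting this is the PBW/crystal compatibility exploited in [SZ2], but here one must establish it for the modified basis, and the delicate point is precisely the $\d_\eta = 2$ block: one needs that multiplication by $\wt f_\eta^{(n)}$ preserves $\SL_\BZ(\infty)$ and the congruence class dictated by the type-$A_2$ canonical basis, for which the explicit analysis of Section 2, especially (2.1.6) and Proposition 2.4, is the essential ingredient.
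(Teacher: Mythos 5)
Your proposal is correct and follows essentially the same route as the paper's proof: expand $b = \pm b\nat(\Bc, \lv_p)$ in the modified PBW-basis $\SX\nat_{\lv_p}$, use the type-$A_2$ input from Proposition 2.4 showing that $\wt f_{\eta}^{(n)}\wt f_{\eta}^{(d_1)}$ contributes $\wt f_{\eta}^{(d_1+n)}$ as its only $\s$-invariant canonical term, apply a bar-invariance/triangularity correction inside $\SL_{\BZ}(\infty)$ to pin down $b' = b\nat(\Bc', \lv_p)$, build the inverse by projecting onto $\wt f_{\eta}^{(n)}\BU_q^-[\eta]$, and obtain part (ii) by the $*$-operation. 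Two of your intermediate claims are stated too strongly but are harmless given the modulus in (6.17.1): for $\d_{\eta} = 2$, left multiplication by $\wt f_{\eta}^{(n)}$ need not preserve $K_{\eta}$ (a weight count shows its $\s$-invariant overflow lands in $\bigoplus_{m \ge n+1}\wt f_{\eta}^{(m)}(\BU_q^-[\eta])^{\s}$, hence inside $\wt f_{\eta}^{(n+1)}\BU_q^{-,\s} \oplus K_{\eta}$), and the binomial coefficients $\begin{bmatrix} d_1+n \\ n \end{bmatrix}$ destroy naive $q\BZ[q]$-triangularity, which is exactly why the paper inserts the correction step (6.17.5)--(6.17.6) that your phrase ``triangularity together with bar-invariance'' compresses.
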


\begin{proof}
We assume that 
$\eta = \eta_p$ with $p \le 0$, and consider $\SX\nat_{\lv_p}$.
We also assume that $\eta = \{ i,j\}$ with $\d_{\eta} = 2$. 
We prove (i). 
Take $b \in \wt\bB^{\s}$ such that $\ve_{\eta}(b) = 0$.  
Then there exists $b\nat(\Bc, \lv_p)$ such that $b = \pm b\nat(\Bc, \lv_p)$. 
We write 
\begin{equation*}
b\nat(\Bc, \lv_p) = L\nat(\Bc, \lv_p) + \sum_{\Bd > \Bc}a_{\Bd}L\nat(\Bd, \lv_p)
\end{equation*}
with $a_{\Bd} \in q\BZ[q]$. 
Since $b \in \wt\bB^{\s}$, $\Bc = (\Bc_+, \Bc_0, \Bc_-)$
with $\Bc_p = \bold 0 = (0,0,0)$, and $\Bd_p = (d_1, d_1, d_1)$ for some $d_1 \ge 0$, 
namely, $L\nat(\Bd, \lv_p) = \wt f^{(d_1)}_{\eta}L\nat(\Bd', \lv_p)$ 
with $\Bd'_p = \bold 0$. 
Since $\wt f^{(n)}_{\eta}b\nat(\Bc, \lv_p) \in \BU_q^{-,\s}$, by Proposition 6.12 
it is written as a linear combination of $\wt f^{(a)}_{\eta}L\nat(\Bc', \lv_p)$ 
modulo $K_{\eta}$
for various $a$ and $\Bc'$, where $\Bc'_p = \bold 0$. 
It follows from the computation for $\BU_q(\Fs\Fl_3)$ (see the proof of Proposition 2.4)
that $\wt f_{\eta}^{(n)}\wt f_{\eta}^{(d_1)}$ is a linear combination of canonical 
basis in $\bB_{\eta}$, among them, the $\s$-invariant one is only $\wt f_{\eta}^{(d_1 + n)}$. 
Therefore, we have  
\begin{align*}
\wt f^{(n)}_{\eta}b\nat(\Bc, \lv_p) &= L\nat(\Bc', \lv_p) + 
                     \sum_{\Bd > \Bc}a_{\Bd}\begin{bmatrix}
                                              d_1 + n \\
                                                 n
                                            \end{bmatrix}
           \wt f^{(d_1 + n)}_{\eta}L\nat(\Bd', \lv_p) + z \\
    &= L\nat(\Bc', \lv_p) + \sum_{\Bd > \Bc}a_{\Bd}L\nat(\Bd'', \lv_p) + y + z,
\end{align*}
with
\begin{equation*}
 y = \sum_{d_1 > 0}a_{\Bd}\biggl(\begin{bmatrix}
                               d_1 + n \\
                                   n
                                 \end{bmatrix} - 1\biggr)L\nat(\Bd'', \lv_p),
\qquad z \in {}_{\BA}K_{\eta},
\end{equation*} 
where $L\nat(\Bc', \lv_p) = \wt f^{(n)}_{\eta}L\nat(\Bc, \lv_p)$, 
and $L\nat(\Bd'', \lv_p) = \wt f^{(d_1 + n)}_{\eta}L\nat(\Bd', \lv_p)$ 
are elements in $\SX\nat_{\lv_p}$.
Write $y + z = \sum_{\Be}c_{\Be}b\nat(\Be, \lv_p)$ with $c_{\Be} \in \BA$. 
Since $d_1 > 0$, we have $\Be_p > \Bn = (n,n,n)$ if $\Be_p$ is $\s$-stable.
 We write 
$c_{\Be}$ as $c_{\Be} = c_{\Be}^+ + c_{\Be}^0 + c_{\Be}^-$, where 
$c_{\Be}^+ \in q\BZ[q], c_{\Be}^- \in q\iv\BZ[q\iv]$ and $c_{\Be}^0 \in \BZ$.  
Then we have
\begin{align*}
\tag{6.17.5}
\wt f^{(n)}_{\eta}&b\nat(\Bc, \lv_p) - 
    \sum_{\Be}(c_{\Be}^0 + c_{\Be}^- +\ol{c_{\Be}^-})b\nat(\Be, \lv_p)  \\
     &= L\nat(\Bc', \lv_p) + \sum_{\Bd > \Bc}a_{\Bd}L\nat(\Bd'', \lv_p)
                            + \sum_{\Be}
                     (c_{\Be}^+ - \ol{c_{\Be}^-})b\nat(\Bd'', \lv_p). 
\end{align*}
The left hand side of (6.17.5) is ${}^-$-invariant, and the right hand side 
belongs to $\SL_{\BZ}(\infty)$, and is equal to 
$L\nat(\Bc', \lv_p) \mod q\SL_{\BZ}(\infty)$.
Hence by (6.16.2), this coincides with $b\nat(\Bc',\lv_p)$. 
It follows that 
\begin{align*}
\tag{6.17.6}
\wt f^{(n)}_{\eta}b\nat(\Bc, \lv_p) &= b\nat(\Bc', \lv_p) + 
       \sum_{\Be}(c_{\Be}^0 + c_{\Be}^- +\ol{c_{\Be}^-})b\nat(\Be, \lv_p) \\
     &\equiv b\nat(\Bc', \lv_p) + 
       \sum_{\substack{\s(\Be_p) = \Be_p \\\Be_p > \Bn}}
      (c_{\Be}^0 + c_{\Be}^- +\ol{c_{\Be}^-})b\nat(\Be, \lv_p) 
        \mod K_{\eta}, 
\end{align*} 
where $b\nat(\Be, \lv_p)$ in the last sum corresponds to 
$L\nat(\Be, \lv_p) \in \wt f_{\eta}^{(n')}\BU_q^-[\eta]$ for some $n' > n$.
Hence $b\nat(\Be, \lv_p) \in \wt f^{(n+1)}_{\eta}\BU_q^-$, and  
the sum in (6.17.6) is contained in $\wt f^{(n+1)}_{\eta}\BU_q^-$. 
Note that since $\wt f^{(n)}_{\eta}(b\nat(\Bc, \lv_p))$ is $\s$-stable, 
$b\nat(\Bc',\lv_p)$ and $b\nat(\Be, \lv_p)$ in the last sum are $\s$-stable
(or its $\s$-orbit is contained in $K_{\eta}$).  
Thus $b' = b\nat(\Bc',\lv_p)$ satisfies the condition in (6.17.1). 
It is clear that $\ve_{\eta}(b') = n$. 
\par 
Note that $b' = b\nat(\Bc', \lv_p)$ is determined uniquely from 
$L\nat(\Bc, \lv_p)$, which is the canonical (signed) basis corresponding 
to the PBW-basis $\wt f^{(n)}_{\eta}L\nat(\Bc, \lv_p)$.   
\par
Conversely, assume that $b' \in \wt\bB^{\s}$ is such that $\ve_{\eta}(b') = n$.
Take $b\nat(\Bc', \lv_p)$ such that $b\nat(\Bc',\lv_p) = \pm b$. 
Then $L\nat(\Bc', \lv_p) = \wt f^{(n)}_{\eta}L\nat(\Bc'', \lv_p)$ 
with $\Bc''_p = \bold 0$.  Let $x$ 
be the projection of $b\nat(\Bc', \lv_p)$ onto $\wt f^{(n)}_{\eta}\BU_q^-[\eta]$.
Then $x \equiv L\nat(\Bc', \lv_p) \mod q\SL_{\BZ}(\infty)$, and $x$ is $\s$-stable. 
Write $x = \wt f^{(n)}_{\eta}x_0$ with $x_0 \in (\BU_q^-[\eta])^{\s}$. 
Then $x_0 \equiv L\nat(\Bc'', \lv_p) \mod q\SL_{\BZ}(\infty)$. 
Consider $b = b\nat(\Bc'', \lv_p)$.  Then 
$b$ is determined uniquely from $x_0$ by a similar condition as 
in (6.16.2), hence $b$ is $\s$-stable.  Clearly $\ve_{\eta}(b) = 0$. 
It follows from the previous construction that $\pi_{\eta,n}(b) = b'$. 
The map $b' \mapsto b$ gives the inverse map of $\pi_{\eta,n}$.  
Hence $\pi_{\eta,n}$
is a bijection. This proves (i).  The case where $\d_{\eta} = 1$ is 
proved similarly, and is simpler.  Now (ii) follows from (i) by applying 
the *-operation. 
\par
Next assume that $\eta = \eta_p$ with $p > 0$.  In this case, by a similar 
argument as above, we can prove (ii).  Then (i) is obtained from this by 
applying *-operation.  The proposition is proved. 
\end{proof}

The $\BA$-version of Proposition 6.17 is given as follows;

\begin{cor}  
Let $b \in \wt\bB^{\s}$ be such that $\ve_{\eta}(b) = 0$.
Then there exists a unique $b' \in \wt\bB^{\s}$ such that
\begin{equation*}
\tag{6.18.1}
\wt f^{(n)}_{\eta}b \equiv b' \mod 
      \sum_{n' > n}\wt f^{(n')}_{\eta}{}_{\BA}\BU_q^{-,\s} \oplus {}_{\BA}K_{\eta}.
\end{equation*}
\end{cor}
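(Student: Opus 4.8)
The plan is to read off the integral statement from the proof of Proposition 6.17, checking that every manipulation carried out there already lives in ${}_{\BA}\BU_q^-$ with coefficients in $\BA$. The element $b'$ will be the very one produced in Proposition 6.17, namely $b' = b\nat(\Bc', \lv_p)$ for $b = \pm b\nat(\Bc, \lv_p)$; only the size of the error module has to be tightened, and the uniqueness has to be transported.

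First I would recall that $\wt f^{(n)}_{\eta}$ lies in ${}_{\BA}\BU_q^-$ and that $\wt\bB$ is an $\BA$-basis of ${}_{\BA}\BU_q^-$. Hence $\wt f^{(n)}_{\eta}b\in{}_{\BA}\BU_q^-$ expands in $\wt\bB$ with coefficients in $\BA$; this is exactly the assertion $c_{\Be}\in\BA$ used in the proof of Proposition 6.17, so the elements $c_{\Be}^{0}+c_{\Be}^{-}+\ol{c_{\Be}^{-}}$ occurring in (6.17.6) again lie in $\BA$. Thus (6.17.6) becomes an identity in ${}_{\BA}\BU_q^-$,
\[
\wt f^{(n)}_{\eta}b\nat(\Bc,\lv_p)\equiv b\nat(\Bc',\lv_p)+\sum_{\substack{\s(\Be_p)=\Be_p\\ \Be_p>\Bn}}\mu_{\Be}\,b\nat(\Be,\lv_p)\mod {}_{\BA}K_{\eta},
\]
with $\mu_{\Be}\in\BA$, where $b'=b\nat(\Bc',\lv_p)$.

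Next I would show that each correction term lands in the claimed $\BA$-module. Every $b\nat(\Be,\lv_p)$ appearing is $\s$-invariant, hence lies in ${}_{\BA}\BU_q^{-,\s}$, and its leading modified PBW-term is $\wt f^{(n')}_{\eta}L\nat(\Be',\lv_p)$ with $\Be'_p=\bold 0$ and $n'>n$, i.e. $\ve_{\eta}(b\nat(\Be,\lv_p))=n'>n$. Using the $\BA$-decomposition of Proposition 6.12(ii) together with the $\Fsl_3$-product formula established in the proof of Proposition 2.4 (which forces only divided powers $\wt f^{(a)}_{\eta}$ with $a\ge n'$ to occur), each such element lies in $\sum_{n''>n}\wt f^{(n'')}_{\eta}({}_{\BA}\BU_q^-[\eta])^{\s}\oplus{}_{\BA}K_{\eta}$. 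Multiplying by $\mu_{\Be}\in\BA$ and absorbing the ${}_{\BA}K_{\eta}$-contributions then yields (6.18.1). For uniqueness, I would note that the $\BA$-module in (6.18.1) is contained in the $\BQ(q)$-module $\wt f^{(n+1)}_{\eta}\BU_q^{-,\s}\oplus K_{\eta}$ of (6.17.1) — indeed $\wt f^{(n')}_{\eta}\in\wt f^{(n+1)}_{\eta}\BU_q^{-,\s}\oplus K_{\eta}$ for $n'>n$ by the same product formula — so any $b'\in\wt\bB^{\s}$ satisfying (6.18.1) also satisfies (6.17.1), and the uniqueness already proved there applies.

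The step I expect to be the main obstacle is the middle one: verifying that the $\s$-invariant canonical basis elements $b\nat(\Be,\lv_p)$ in the error, which a priori belong only to the $\BQ(q)$-span $\wt f^{(n')}_{\eta}\BU_q^{-,\s}$, actually land in the integral piece $\wt f^{(n')}_{\eta}({}_{\BA}\BU_q^-[\eta])^{\s}$ modulo ${}_{\BA}K_{\eta}$. This requires that the splitting of Proposition 6.12(ii) be compatible with the canonical basis and with the invariant $\ve_{\eta}$, so that the leading-term relation $\Be_p=(n',n',n')$ genuinely records $\BA$-divisibility by $\wt f^{(n')}_{\eta}$; here the Gaussian binomials $\begin{bmatrix} d_1+n\\ n\end{bmatrix}$ from the $\Fsl_3$-computation must be kept integral rather than inverted, which is precisely why one argues modulo ${}_{\BA}K_{\eta}$.
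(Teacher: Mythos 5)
Your proposal is correct and is essentially the paper's own (implicit) argument: the paper states Corollary 6.18 with no separate proof precisely because every step in the proof of Proposition 6.17 is already carried out over $\BA$ — the coefficients $c_{\Be}$ there lie in $\BA$, the error terms $b\nat(\Be,\lv_p)$ have $\Be_p > \Bn$ and hence land in $\sum_{n'>n}\wt f^{(n')}_{\eta}({}_{\BA}\BU_q^-[\eta])^{\s}\oplus{}_{\BA}K_{\eta}$ via Proposition 6.12(ii), and uniqueness transports from (6.17.1) exactly as you say. Your extra care about not inverting the Gaussian binomials and working modulo ${}_{\BA}K_{\eta}$ matches the intended reading.
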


\para{6.19.}
For each $\eta \in \ul I$, by making use of the bijection (6.17.2), 
we define a map $\wt F_{\eta}$ as a composite of bijections
$\pi_{\eta, n}\iv$ and $\pi_{\eta, n+1}$

\begin{equation*}
\wt F_{\eta} :\{ b \in \wt\bB^{\s} \mid \ve_{\eta}(b) = n\} 
   \isom  \{ b' \in \wt\bB^{\s} \mid \ve_{\eta}(b') = 0\} 
   \isom  \{ b'' \in \wt\bB^{\s} \mid \ve_{\eta}(b'') = n+1 \}  
\end{equation*}

In the case where $\ve_{\eta}(b) > 0$, we define $\wt E_{\eta}$ as the inverse
map of $\wt F_{\eta}$, and set $\wt E_{\eta}(b) = 0$ if $\ve_{\eta}(b) = 0$. 
The maps $\wt F_{\eta}, \wt E_{\eta} : \wt\bB^{\s} \to \wt\bB^{\s} \cup \{ 0\}$
are called Kashiwara operators. 

\begin{prop}  
For each $b \in \wt\bB^{\s}$, there exists a sequence $\eta_1, \dots, \eta_N \in \ul I$
such that $b = \pm \wt F_{\eta_1}\cdots \wt F_{\eta_N}1$. 
\end{prop}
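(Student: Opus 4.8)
The plan is to argue by induction on the height of $-\weit(b)\in Q_+$, using the Kashiwara operators $\wt E_\eta,\wt F_\eta$ of 6.19 to strip off one factor $\wt f_\eta$ at a time. If $\weit(b)=0$ then $b\in(\BU_q^-)_0=\BQ(q)\cdot 1$, so $b=\pm 1$ and the empty sequence works. For the inductive step the essential point is a highest-weight statement: if $\weit(b)\neq 0$ then there is some $\eta\in\ul I$ with $\ve_\eta(b)\geq 1$. Granting this, set $b'=\wt E_\eta(b)\in\wt\bB^\s$; then $\wt F_\eta(b')=b$ and $\weit(b')=\weit(b)-\weit(\wt f_\eta)$ has strictly smaller height, since $\wt f_\eta$ has weight $-\sum_{i\in\eta}\a_i$ when $\d_\eta=1$, and $-(2\a_i+2\a_j)$ when $\d_\eta=2$ with $\eta=\{i,j\}$. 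By induction $b'=\pm\wt F_{\eta_2}\cdots\wt F_{\eta_N}1$, and since the operators commute with the sign involution $b\mapsto-b$, we get $b=\wt F_\eta(b')=\pm\wt F_\eta\wt F_{\eta_2}\cdots\wt F_{\eta_N}1$.

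The heart of the matter is the highest-weight statement, which I would deduce by comparing the orbit data $\ve_\eta$ with the ordinary Kashiwara data $\ve_i$ of the full canonical basis. Suppose, for contradiction, that $\ve_\eta(b)=0$ for every $\eta$. Fix $\eta$ and expand $b=\sum_{a\geq 0}\wt f_\eta^{(a)}x_a$ with $x_a\in(\BU_q^-[\eta])^\s$, as permitted by Proposition 6.12 and the remark in 6.16; here $\ve_\eta(b)=0$ means precisely $x_0\neq 0$. For each $i\in\eta$, Lemma 6.14 gives $x_0\in(\BU_q^-[\eta])^\s\subset\BU_q^-[i]$, while every higher term satisfies $\wt f_\eta^{(a)}\in f_i\BU_q^-$ for $a\geq 1$ (reorder the commuting factors when $\d_\eta=1$; use $\wt f_\eta^{(a)}=f_i^{(a)}f_j^{(2a)}f_i^{(a)}$ when $\d_\eta=2$). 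Since $\BU_q^-=\BU_q^-[i]\oplus f_i\BU_q^-$ and $x_0\in\BU_q^-[i]$ is nonzero, $b$ has nonzero $\BU_q^-[i]$-component, i.e. $\ve_i(b)=0$. As $\eta$ ranges over $\ul I$ every vertex $i$ is reached, so $\ve_i(b)=0$ for all $i\in I$; hence $b\in\bigcap_{i\in I}\BU_q^-[i]$, which equals $\BQ(q)\cdot 1$ by the standard argument using nondegeneracy of the bilinear form $(\ ,\ )$ together with $(\BU_q^-)_\nu=\sum_i f_i(\BU_q^-)_{\nu+\a_i}$ for $\nu\neq 0$. This forces $\weit(b)=0$, a contradiction.

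The main obstacle is the intrinsic comparison in the second paragraph: the operators $\ve_\eta$ are defined through the modified PBW-bases $\SX\nat_{\lv_p}$ and the decompositions of Propositions 6.12 and 6.15, whereas $\ve_i$ is classical, and one must be certain that the $\eta$-leading term $x_0$ genuinely controls the $i$-divisibility of $b$. This is exactly where the inclusion $\BU_q^-[\eta]\subset\BU_q^-[i]$ of Lemma 6.14 and the elementary fact $\wt f_\eta^{(a)}\in f_i\BU_q^-$ ($a\geq 1$) are indispensable; the case $\d_\eta=2$ ultimately rests on the $\BU_q(\Fsl_3)$-computation of Section 2, which guarantees that the $\s$-fixed canonical basis of $\BL_\eta^-$ is exactly $\{\wt f_\eta^{(a)}\}$. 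A secondary point to verify is that $\wt E_\eta$ and $\wt F_\eta$ are mutually inverse on the relevant $\ve_\eta$-strata and intertwine with $b\mapsto-b$, so that the reconstruction $b=\wt F_\eta(\wt E_\eta b)$ and the sign bookkeeping are legitimate; both are immediate from Proposition 6.17 and Corollary 6.18.
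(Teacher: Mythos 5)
Your proof is correct and follows essentially the same route as the paper's: the paper likewise establishes the key highest-weight claim --- that $b \neq \pm 1$ forces $\ve_{\eta}(b) \neq 0$ for some $\eta \in \ul I$ --- by using Lemma 6.14 to pass from $\ve_{\eta}(b) = 0$ to $\ve_i(b) = 0$ for all $i \in \eta$, then invoking [L1, Lemma 1.2.15], and finally inducting on the weight via $b' = \wt E_{\eta}(b)$. Your more detailed justification of the Lemma 6.14 step is sound in substance, with one small imprecision: the expansion from Proposition 6.12 carries an extra summand $y \in K_{\eta}$ which you drop, so strictly one should also rule out cancellation of the leading term $x_0$ by the $\BU_q^-[i]$-component of $y$ (this follows, as in the proof of Proposition 6.17, from the fact that $b \equiv L\nat(\Bc,\lv_p) \bmod q\SL_{\BZ}(\infty)$ with $\Bc_p = \bold 0$), a point the paper itself suppresses in its one-line appeal to Lemma 6.14.
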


\begin{proof}
Take $b \in \wt\bB^{\s}$, and assume that $b \ne \pm 1$. 
Then there exists $\eta \in \ul I$  such that $\ve_{\eta}(b) \ne 0$.  
In fact assume that $\ve_{\eta}(b) = 0$ for any $\eta$.  By Lemma 6.14, 
if $\ve_{\eta}(b) = 0$, then $\ve_i(b) = 0$ for any $i \in \eta$. 
Thus $\ve_i(b) = 0$ for any $i \in I$. By a well-known result (see [L1, Lemma 1.2.15]), 
this implies that $b = \pm 1$. 
\par
Now take $\eta$ such that $\ve_{\eta}(b) \ne 0$. Then 
$b' = \wt E_{\eta}(b) \in \wt\bB^{\s}$ 
satisfies the condition that $\ve_{\eta}(b') < \ve_{\eta}(b)$. 
Then the proposition follows by induction on the weight of $b$. 
\end{proof}

We are now in a position to prove the surjectivity of $\Phi$ in the affine case. 
The proof is almost parallel 
to the one in [SZ2, Proposition 2.10].
\begin{prop}  
Assume that $(X, \ul X)$ satisfies the condition in Theorem 2.6.
Then the map $\Phi : {}_{\BA'} \ul\BU_q^-\to \BV_q$ is surjective, 
hence Theorem 2.6 holds. 
\end{prop}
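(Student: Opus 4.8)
The plan is to prove surjectivity by showing that $\Im\Phi$ contains the $\BA'$-basis $\pi(\bB^{\s})$ of $\BV_q$ (which is a basis by the remark in 6.16 and Remark 1.9); combined with the injectivity from Proposition 2.10, this yields the isomorphism ${}_{\BA'}\ul\BU_q^- \isom \BV_q$ asserted in Theorem 2.6. Since $\Phi$ is an $\BA'$-algebra homomorphism (Proposition 2.8) with $\Phi(\ul f_{\eta}^{(a)}) = g_{\eta}^{(a)}$, its image is an $\BA'$-subalgebra containing every $g_{\eta}^{(a)}$, so it suffices to prove that $\pi(b) \in \Im\Phi$ for all $b \in \wt\bB^{\s}$.

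I would argue by induction on the height of $-\weit(b)$, the case $b = \pm 1$ being clear since $\pi(1) = 1 = \Phi(1)$. Fix $b \in \wt\bB^{\s}$ of weight $\nu$ with $b \ne \pm 1$. As in the proof of Proposition 6.20 (via Lemma 6.14 and [L1, Lemma 1.2.15]) there is $\eta \in \ul I$ with $\ve_{\eta}(b) = n > 0$; choose $p$ with $\eta_p = \eta$. By Proposition 6.17(i) there is a unique $b_1 \in \wt\bB^{\s}$ with $\ve_{\eta}(b_1) = 0$ and $b = \pi_{\eta,n}(b_1)$. Its weight $\weit(b_1) = \nu - n\,\weit(\wt f_{\eta})$ has strictly smaller height (as $\weit(\wt f_{\eta}) \in Q_-$), so $\pi(b_1) \in \Im\Phi$ by the induction hypothesis.

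The key is the triangular identity obtained by applying $\pi$ to (6.17.6) (or its $\BA$-form, Corollary 6.18). Using that $\pi$ is an algebra map annihilating $K_{\eta}$ (as $K_{\eta} \subseteq J$), together with $\pi(\wt f_{\eta}^{(n)}x) = g_{\eta}^{(n)}\pi(x)$, one obtains in $\BV_q$
\begin{equation*}
g_{\eta}^{(n)}\,\pi(b_1) = \pi(b) + \sum_{\ve_{\eta}(b'') > n} \bar c_{b''}\,\pi(b''),
\qquad \bar c_{b''} \in \BA',
\end{equation*}
where each $b''$ runs through $\wt\bB^{\s}$ of the same weight $\nu$ with $\ve_{\eta}(b'') > n$ (these are exactly the surviving $\s$-invariant terms $b\nat(\Be, \lv_p)$ with $\Be_p > \Bn$). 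The left-hand side lies in $\Im\Phi$, since $g_{\eta}^{(n)} = \Phi(\ul f_{\eta}^{(n)})$ and $\pi(b_1) \in \Im\Phi$. Fixing the weight $\nu$ and this $\eta$, I run a downward induction on the (bounded) value of $\ve_{\eta}$: in the top case the error sum is empty, and in general every $\pi(b'')$ with $\ve_{\eta}(b'') > n$ is already in $\Im\Phi$; hence $\pi(b) \in \Im\Phi$. Letting $\eta$ range over the finitely many classes with $\ve_{\eta}(b) > 0$ covers all nontrivial $b$ of weight $\nu$, closing the outer induction.

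The main obstacle, and the step I would verify most carefully, is establishing the displayed triangular relation with the correct filtration behaviour: one must know that the terms produced by $\wt f_{\eta}^{(n)}b_1$ beyond the leading $b$ lie strictly higher in the $\ve_{\eta}$-filtration, which is the precise content of (6.17.6), and that every contribution not of this form is absorbed into $K_{\eta} \subseteq J$ and hence killed by $\pi$. Granting Sections 6.7--6.18, what remains is the bookkeeping of the nested inductions and the observation that the coefficient of $\pi(b)$ is $1$, so the system is unitriangular and invertible; the structural fact that makes the whole scheme work is that the Kashiwara bijections $\pi_{\eta,n}$ organize $\pi(\bB^{\s})$ compatibly with the $\ve_{\eta}$-filtration, exactly as in [SZ2, Prop.~2.10].
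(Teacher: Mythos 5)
Your proposal is correct and follows essentially the same route as the paper's proof of Proposition 6.21: an outer induction on the weight, choice of $\eta$ with $\ve_{\eta}(b) > 0$ via Proposition 6.20 (resp.\ Lemma 6.14 and [L1, Lemma 1.2.15]), the triangular relation of Corollary 6.18 with ${}_{\BA}K_{\eta} \subset J$ annihilated by $\pi$ and $\pi(\wt f_{\eta}^{(n)}b') = \Phi(\ul f_{\eta}^{(n)})\pi(b')$, and a secondary downward induction on $\ve_{\eta}$ within the fixed weight space, terminating because $\dim(\BU_q^-)_{\nu} < \infty$. The bookkeeping you flag as the main obstacle is exactly what (6.17.6) and Corollary 6.18 provide, so there is no gap.
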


\begin{proof}
The image of $\wt\bB^{\s}$ gives a signed basis of $\BV_q$.
Thus it is enough to show, for each $b \in \wt\bB^{\s}$, that
\begin{equation*}
\tag{6.21.1}
\pi(b) \in \Im \Phi.
\end{equation*}
Take $b \in \wt\bB^{\s}$. Let $\nu = \nu(b) \in Q_-$ be the weight of $b$. 
We prove (6.21.1) by induction on $|\nu|$. If $|\nu| = 0$, it certainly holds. 
So assume that $|\nu| \ge 1$, and assume that (6.21.1) holds for $b'$ 
such that $|\nu(b')| < |\nu|$.  
By Proposition 6.20, there exists $\eta \in \ul I$
such that $b = \wt F^n_{\eta}(b')$ for some $b' \in \wt\bB^{\s}$ with 
$\ve_{\eta}(b') = 0$.  Thus $b'$ satisfies (6.21.1) by induction 
hypothesis.  We may further assume that
(6.21.1) holds for $b'' \in \wt\bB^{\s}$ such that $\nu(b'') = \nu$ and that
$\ve_{\eta}(b'') > n$ (note that since $\dim (\BU_q^-)_{\nu} < \infty$, 
if $n' >> 0$, there does not exist $b''$ such that $\nu(b'') = \nu$ and that 
$\ve_{\eta}(b'') = n'$).  
By Corollary 6.18, $b$ can be written as 
$b = \wt f^{(n)}_{\eta}b' + y + z$, where $z \in {}_{\BA}K_{\eta}$, and 
$y$ is an $\BA$-linear combination of $b'' \in \wt\bB^{\s}$ such that 
$\nu(b'') = \nu$ and that $\ve_{\eta}(b'') > n$. 
By induction, $\pi(b') \in \Im \Phi$.  Since $\Phi$ is a homomorphism by 
Proposition 2.8, we have
$\pi(\wt f^{(n)}_{\eta} b') = \Phi(\ul f^{(n)}_{\eta})\pi(b') \in \Im \Phi$. 
Since ${}_{\BA}K_{\eta} \subset J$, we have $\pi(z) = 0$.  By induction hypothesis, 
$\pi(y) \in \Im \Phi$. This implies that $\pi(b) \in \Im \Phi$.  
Hence (6.21.1) holds for $\nu$.  The proposition is proved. 
\end{proof}

\para{6.22.}
We consider the isomorphism $\Phi : {}_{\BA'}\ul\BU_q^- \isom \BV_q$ 
as in Theorem 2.6. Recall the $\BZ$-module 
$(Q^{\s})' = \bigoplus_{\eta \in \ul I}\BZ\wt\a_{\eta}$, and we identify 
this with $\ul Q = \bigoplus_{\eta \in \ul I}\BZ\a_{\eta}$, as in 5.2.  
We have weight space decompositions 
$\BU_q^- = \bigoplus_{\nu \in Q_-}(\BU_q^-)_{\nu}$ and 
$\ul\BU_q^- = \bigoplus_{\nu \in \ul Q_-}(\ul\BU_q^-)_{\nu}$.  
Under the identification $(\ul Q)_- \simeq (Q_-^{\s})' \subset Q_-$, 
Theorem 2.6 implies that 
we have a weight space decomposition 
$\BV_q = \bigoplus_{\nu \in (Q^{\s}_-)'}(\BV_q)_{\nu}$, and 
$\Phi$ induces an isomorphism ${}_{\BA'}(\ul\BU_q)_{\nu} \isom (\BV_q)_{\nu}$
for each $\nu \in \ul Q_-$. 
Note that $\{ \pi(b) \mid b \in \wt\bB^{\s}\}$ gives a signed basis of $\BV_q$.  Hence 
$\{ \pi(b) \mid b \in \wt\bB^{\s}(\nu)\}$ gives a signed basis of $(\BV_q)_{\nu}$, 
where $\wt\bB^{\s}(\nu) = \{ b \in \wt\bB^{\s} \mid \nu(b) = \nu\}$.  
In particular, we have
\begin{equation*}
\tag{6.22.1}
|\wt\bB^{\s}(\nu)|/2 = \dim (\ul\BU_q^-)_{\nu} \quad \text{  for  } \quad 
\nu \in (Q^{\s}_-)' \simeq \ul Q_-. 
\end{equation*}

\para{6.23.}
We consider the modified PBW-basis 
$\SX\nat_{\lv} = \{ L\nat(\Bc, \lv) \mid \Bc \in \SC\}$ 
for the case where $p = 0$.  
Note that it is not verified that $\s$ leaves the set $\SX\nat_{\lv}$ invariant.
By 6.4, $S^{\lv}_{\Bc_0} = T_w S_{\Bc_0}$, and $w$ is the minimal lift of 
$\ul w = \prod_{i \in \eta_0 - \{0\}}s_i$. Recall that $\Bc_0 = (\r^{(i)})_{i \in I_0}$
is an $I_0$-tuple of partitions. Let $\SC_0$ be the set of $I_0'$-tuple of partitions, 
and we regard $\Bc_0 \in \SC_0$ as an $I_0$-tuple of partition such that 
$\r^{(i)} = \emptyset$ for $i \notin I_0'$.  
It follows that $S_{\Bc_0}^{\lv} = S_{\Bc_0}$ for $\Bc_0 \in \SC_0$. 
$\s$ acts naturally on $\SC_0$ by $\s(\r^{(i)}) = \r^{\s(i)}$, and 
we have $\s(S_{\Bc_0}) = S_{\s(\Bc_0)}$ for $\Bc_0 \in \SC_0$.  We denote by 
$\SC^{\s}_0$ the set of $\s$-fixed elements in $\SC_0$.   
Let $\Bc_+ = (c_{\b})_{\b \lv \d}, \Bc_- = (c_{\g})_{\g \gv \d}$ be as in 6.4.
Let $\SC_+$ be the set of $\Bc_+$ such that $c_{\b} = 0$ for $\b \in \vD^{(1)}_{>}$, 
and $\SC_-$ the set of $\Bc_-$ such that $c_{\g} = 0$ for $\g \in \vD^{(1)}_{<}$. 
We denote by $\SC_{\pm}^{\s}$ the set of $\s$-fixed elements in $\SC_{\pm}$. 
Let $\SC^{\s}$ be the set of triples $\Bc = (\Bc_+, \Bc_0,\Bc_-)$ such that 
$\Bc_{\pm} \in \SC_{\pm}^{\s}, \Bc_0 \in \SC_0^{\s}$.  The set of triples
$\ul\SC = \{ \ul \Bc = (\ul\Bc_;, \ul \Bc_0, \ul\Bc_+)$, and the set of 
PBW-basis $\ul\SX_{\ul\Bh} = \{ L(\ul\Bc, 0) \mid \ul\Bc \in \ul\SC\}$ 
are defined as in [BN].  As in [SZ2], one can check that we have a natural 
bijection $\SC^{\s} \simeq \ul\SC$. 
Let $(\SX\nat_{\lv})^{\s}$ be the set of $\s$-fixed PBW-bases 
contained in $\SX\nat_{\lv}$.  The following result was proved 
in [SZ2, Theorem 6.13] in the case where $\s$ is admissible with $\ve \ne 4$.  
We can prove it in the general case by using (6.22.1) in a similar way. 

\begin{thm}  
Assume that $\s$ is admissible with $\ve = 4$,  or not admissible satisfying the condition 
in Theorem 2.6.  Then  
\begin{enumerate}
\item  \ $(\SX\nat_{\lv})^{\s} = \{ L\nat(\Bc, \lv) \mid \Bc \in \SC^{\s}\}$. 
\item  $\wt\bB^{\s} = \pm\{ b\nat(\Bc, \lv) \mid \Bc \in \SC^{\s} \}$.
\end{enumerate}
\end{thm}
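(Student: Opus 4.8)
The plan is to prove the two statements in sequence: establish the inclusion $\{ L\nat(\Bc, \lv) \mid \Bc \in \SC^{\s}\} \subseteq (\SX\nat_{\lv})^{\s}$ directly, deduce (ii) from it by a dimension count, and then obtain the reverse inclusion in (i) as a formal consequence of (ii). The guiding principle is to never use the (unverified, see 6.23) statement that $\s$ permutes the whole set $\SX\nat_{\lv}$; instead all $\s$-equivariance is checked either block by block or through the crystal lattice.

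First I would show that $L\nat(\Bc, \lv)$ is $\s$-invariant for every $\Bc \in \SC^{\s}$. For such $\Bc$ the components along $\vD^{(1)}_{>} \sqcup \vD^{(1)}_{<}$ vanish and $\Bc_0 \in \SC_0$, so by 6.23 the imaginary factor is $S_{\Bc_0}$ with $\s(S_{\Bc_0}) = S_{\s(\Bc_0)} = S_{\Bc_0}$. The two real factors are products of the modified root vectors attached to the blocks $\eta_k$ sitting in $\vD^{(0)}_{>}$ and $\vD^{(0)}_{<}$; since $\s \circ \wt T_{\eta} = \wt T_{\eta} \circ \s$ and $\s$ permutes each canonical basis $\bB_{\eta_k}$, applying $\s$ only permutes the local data, and by Lemma 2.2 for $\d_{\eta_k} = 2$ (together with the evident statement for $\d_{\eta_k} = 1$) each factor is $\s$-fixed precisely because $\Bc_k$ is $\s$-fixed. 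Hence $L\nat(\Bc, \lv)$ is $\s$-fixed, giving the $\supseteq$-inclusion of (i).

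Next I would pass to canonical bases and count. Because $\s$ commutes with the bar-involution and stabilizes $\SL_{\BZ}(\infty)$ and $q\SL_{\BZ}(\infty)$, for any $\Bc$ with $L\nat(\Bc, \lv)$ $\s$-invariant the element $\s(b\nat(\Bc, \lv))$ is again bar-invariant, lies in $\wt\bB$, and is congruent to $L\nat(\Bc, \lv)$ modulo $q\SL_{\BZ}(\infty)$; by the characterizing property (6.16.2) it must equal $b\nat(\Bc, \lv)$, so $b\nat(\Bc, \lv) \in \wt\bB^{\s}$. In particular $\{ b\nat(\Bc, \lv) \mid \Bc \in \SC^{\s}\} \subseteq \wt\bB^{\s}$, and these elements are pairwise distinct by the unitriangularity in (6.16.2). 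Restricting to a weight $\nu$, the natural bijection $\SC^{\s} \simeq \ul\SC$ of 6.23 gives $|\SC^{\s}(\nu)| = |\ul\SC(\nu)| = \dim (\ul\BU_q^-)_{\nu}$, while (6.22.1) gives $|\wt\bB^{\s}(\nu)|/2 = \dim (\ul\BU_q^-)_{\nu}$. Thus the injection is a bijection onto $\wt\bB^{\s}(\nu)$ up to sign, which is exactly (ii).

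Finally, the remaining inclusion of (i) follows: if $L\nat(\Bc, \lv)$ is $\s$-fixed then $b\nat(\Bc, \lv) \in \wt\bB^{\s}$ by the argument just given, so by (ii) one has $b\nat(\Bc, \lv) = \pm b\nat(\Bc', \lv)$ with $\Bc' \in \SC^{\s}$, and comparing leading terms in (6.16.2) forces $\Bc = \Bc' \in \SC^{\s}$. I expect the main obstacle to be the first step: since $\s$ is not known to permute $\SX\nat_{\lv}$, the $\s$-invariance of $L\nat(\Bc, \lv)$ must be extracted directly from the block decomposition and from the behaviour of $\s$ on the imaginary vectors $S_{\Bc_0}$, and one must check carefully that the $\vD^{(1)}$-components genuinely drop out for $\Bc \in \SC^{\s}$. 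Once this is secured, the passage to canonical bases and the counting are formal, the only non-trivial external inputs being the bijection $\SC^{\s} \simeq \ul\SC$ and the equality (6.22.1).
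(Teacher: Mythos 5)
Your proposal is correct and follows essentially the same route as the paper, which gives no detailed argument of its own but states that the theorem is proved as in [SZ2, Theorem 6.13] ``by using (6.22.1) in a similar way'': your direct verification that $\Bc \in \SC^{\s}$ yields a $\s$-fixed $L\nat(\Bc, \lv)$, the transfer to canonical bases via bar-invariance and the lattice $\SL_{\BZ}(\infty)$, and the weight-by-weight count through the bijection $\SC^{\s} \simeq \ul\SC$ together with (6.22.1) are exactly the intended ingredients. Your decision to deduce the inclusion $(\SX\nat_{\lv})^{\s} \subseteq \{ L\nat(\Bc,\lv) \mid \Bc \in \SC^{\s}\}$ from (ii) and the unitriangularity (6.16.2), rather than assuming $\s$ permutes $\SX\nat_{\lv}$, correctly respects the caveat stated in 6.23.
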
 

\para{6.25.}
We denote by $\ul\Bc \mapsto \Bc$ the bijection $\ul\SC \isom \SC^{\s}$. 
For each $\Bc \in \SC^{\s}$, $L\nat(\Bc, \lv)$ gives an element in 
${}_{\BA'}\BU_q^{-,\s}$, hence $\pi(L\nat(\Bc, \lv)) \in \BV_q$. On the other
hand, for each $\ul\Bc \in \ul\SC$, one can consider $L(\ul\Bc, 0) \in \ul\SX_{\ul\Bh}$
and its image $\Phi(L(\ul\Bc, 0)) \in \BV_q$.  We have the following result.

\begin{thm}  
Let $\Phi :{}_{\BA'}\ul\BU_q^- \isom \BV_q$ be the isomorphism given in 
Theorem 1.4 or Theorem 2.6.  In particular $\ul X$ is not of type 
$A_2^{(2)}$ nor $A_1^{(1)}$.  Then 
\begin{enumerate}
\item \ $\Phi(L(\ul\Bc, 0)) = \pi(L\nat(\Bc, \lv))$ for each $\ul\Bc \in \ul\SC$. 
\item \ $\Phi(b(\ul\Bc, 0)) = \pi(b\nat(\Bc, \lv))$ for each $\ul\Bc \in \ul\SC$. 
\item \ The natural map 
\begin{equation*}
\xymatrix{
   {}_{\BA'}\BU_q^{-,\s} \ar[r]^{\ \ \pi}  & \BV_q  \ar[r]^{\Phi\iv} & {}_{\BA'}\ul\BU_q^-
}
\end{equation*}
induces a bijection $\wt\bB^{\s} \isom \wt{\ul\bB}$, 
which is explicitly given by $b\nat(\Bc, \lv) \mapsto b(\ul\Bc, 0)$. 
\end{enumerate} 
\end{thm}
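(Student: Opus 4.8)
The plan is to deduce all three assertions from part (i), exactly as was done in the finite case (Theorem 3.7): once (i) is available, part (ii) follows from the bar-invariance of $\Phi$ together with the triangularity property of the canonical basis, and part (iii) is a formal consequence of (ii) and Theorem 6.24. So I would first record these two deductions and then concentrate the real work on (i). For (ii): $\Phi$ sends the bar-invariant generators $\ul f_{\eta}^{(a)}$ to the bar-invariant elements $g_{\eta}^{(a)} = \pi(\wt f_{\eta}^{(a)})$, hence commutes with the bar-involution; since $\pi$ is compatible with bar and carries $\SL_{\BZ}(\infty)$ into its image, applying $\Phi$ to the expansion $b(\ul\Bc, 0) = L(\ul\Bc, 0) + \sum_{\ul\Bd > \ul\Bc} a_{\ul\Bd} L(\ul\Bd, 0)$ and invoking (i) exhibits $\Phi(b(\ul\Bc,0))$ as a bar-invariant element of $\BV_q$ congruent to $\pi(L\nat(\Bc,\lv))$ modulo $q\SL_{\BZ}(\infty)$; by the uniqueness in (6.16.2) it must equal $\pi(b\nat(\Bc,\lv))$. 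For (iii): since $\Phi$ is an isomorphism and $\pi(\wt\bB^{\s})$ is a signed basis of $\BV_q$ by Theorem 6.24, the composite $\Phi\iv\circ\pi$ carries $\wt\bB^{\s}$ bijectively onto a signed basis of $\ul\BU_q^-$, which (ii) identifies with $\wt{\ul\bB}$, giving the explicit rule $b\nat(\Bc,\lv)\mapsto b(\ul\Bc,0)$ under $\ul\Bc\mapsto\Bc$.

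For (i) I would follow the reduction used for (3.7.1). Write $L(\ul\Bc, 0)$ and $L\nat(\Bc,\lv)$ in their factored forms (6.4.1), namely as an ordered product of real root vectors, the imaginary vector $\ul S_{\ul\Bc_0}$ (resp. $S_{\Bc_0}^{\lv}$), and real root vectors. The point enabling a factorwise comparison is that for $\Bc \in \SC^{\s}$ \emph{every} factor is already $\s$-invariant: for the imaginary factor this is $\s(S_{\Bc_0}) = S_{\s(\Bc_0)}$ with $\Bc_0 \in \SC_0^{\s}$, while for a real factor attached to a block with $\d_{\eta_k} = 2$ and $J_{\eta_k} = (i,j,i)$, the analysis of Section 5 shows that the three block roots $\b_a,\b_{a+1},\b_{a+2}$ satisfy $O(\b_a) = O(\b_{a+1}) = O(\b_{a+2}) = 2(\b_a + \s(\b_a))$; hence under $\ul\Bc\mapsto\Bc$ the block exponent $\Bc_k$ is diagonal and $\wt f_{\eta_k}^{(\Bc_k)} = \wt T\cdots(b((c,c,c),\Bh_{\eta_k}))$ is $\s$-fixed by Lemma 2.2. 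Since $\Phi$ is an algebra homomorphism (Proposition 2.8) and $\pi$ is the quotient by the two-sided ideal $J$, (i) then reduces to the two factorwise identities
\begin{equation*}
\Phi\bigl(\ul T_{\eta_{j_1}}\cdots\ul T_{\eta_{j_m}}(\ul f_{\eta_k}^{(a)})\bigr)
   = \pi\bigl(\wt T_{\eta_{j_1}}\cdots\wt T_{\eta_{j_m}}(\wt f_{\eta_k}^{(a)})\bigr),
\qquad
\Phi(\ul S_{\ul\Bc_0}) = \pi(S_{\Bc_0}^{\lv}),
\end{equation*}
the first being the exact affine analogue of (3.7.1) and the second its imaginary counterpart.

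The real-root identity I would prove by induction on the length $m$ of the braid word, reducing each step to the intertwining of $\Phi$ with a single $\ul T_{\eta}$ against $\pi\circ\wt T_{\eta}$; as in Theorem 3.7 this intertwining is local and involves only the rank-two parabolic attached to a pair $\{\eta,\eta'\}$, so it is settled by the finite-type computations of Section 4. Here one cannot argue through a permutation of $\SX\nat_{\lv}$, since $\s$ is not known to preserve it; instead I would use the intrinsic characterization of $\BU_q^-[\eta]$ in Proposition 6.10, together with the convex-order bookkeeping of Proposition 5.12, to certify that the relevant root vectors lie in the correct subspaces at each step. The imaginary identity I expect to reduce to the finite root datum $\ul X_0$ attached to $I_0'$: the Beck--Nakajima vectors $S_{\Bc_0}$ are built from the finite-type subalgebra of type $\vD_0'$, on which $\s$ acts as a finite-type automorphism already handled (Theorem 3.7 in the non-admissible case, [SZ1], [MSZ] in the admissible case), so $\Phi(\ul S_{\ul\Bc_0}) = \pi(S_{\Bc_0}^{\lv})$ should follow from those results.

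The main obstacle is precisely this step, the verification of the affine analogue of (3.7.1) together with the imaginary-vector identity. In contrast to [SZ2], where $\s$ permutes the PBW monomials outright, the present non-admissible, doubly-infinite setting forces the reduction to be carried out through the intrinsic descriptions of Section 6 and the $\s$-action on the root system in Section 5, rather than through a direct combinatorial action on PBW monomials; keeping the convex order, the block structure of the $\d_\eta = 2$ factors, and the imaginary part mutually compatible throughout the induction is where the care is needed.
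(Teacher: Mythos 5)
Your plan for (i)--(iii) is, in its main lines, exactly the paper's proof: the paper reduces (i) to the braid-word identities (6.26.1) for the real root vectors along the doubly infinite sequence $\ul\Bh$ (your displayed ``real-root identity'' is precisely (6.26.1)), reduces these to the case where $\ul X$ has rank 2, observes that since $A_2^{(2)}$ and $A_1^{(1)}$ are excluded $\ul X$ is of finite type of rank 2, and then quotes [SZ1] for the admissible cases and the $(A_4,C_2)$ computation in the proof of Theorem 3.7 for the non-admissible one; (ii) then follows from (i) via bar-compatibility of $\Phi$ and the characterization (6.16.2), and (iii) from (ii) together with Theorem 6.24 -- all exactly as you describe. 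Your observation that one cannot argue via a $\s$-permutation of $\SX\nat_{\lv}$ is also correct and is the reason the paper proceeds factorwise rather than as in [SZ2].

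The one step that fails as written is your treatment of the imaginary factor. You assert that the Beck--Nakajima vectors $S_{\Bc_0}$ ``are built from the finite-type subalgebra of type $\vD_0'$'' and that $\Phi(\ul S_{\ul\Bc_0}) = \pi(S^{\lv}_{\Bc_0})$ therefore follows from the finite-type results (Theorem 3.7, [SZ1], [MSZ]). That is false: for $\Bc_0 = (\r^{(i)})$ the vector $S_{\Bc_0}$ has the purely imaginary weight $-\bigl(\sum_i |\r^{(i)}|\bigr)\d$, so it does not lie in the subalgebra generated by the $f_i$ with $i \in I_0'$; in [BN] it is a Schur-polynomial expression in the vectors $\wt\psi_{i,m}$, which are $q$-commutators of affine real root vectors such as $f_{\a_i}$ and $f_{m\d - \a_i}$, so the affine node enters essentially and no finite-type reduction is available. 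This is also why the paper can say that (i) reduces \emph{entirely} to (6.26.1): since $S^{\lv}_{\Bc_0} = T_wS_{\Bc_0}$ (see 6.4 and [MT]) and $\ul S_{\ul\Bc_0}$ are given by the same polynomial expressions in the real root vectors attached to the words $\Bh'$ and $\ul\Bh$, once (6.26.1) matches every real root vector with its counterpart, the imaginary identity follows automatically from the fact that $\Phi$ is an algebra homomorphism (Proposition 2.8) and $\pi$ is the quotient map. Replacing your finite-type reduction of the imaginary identity by this polynomiality argument closes the gap, and the rest of your proof then coincides with the paper's.
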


\begin{proof}
Recall that $\ul\Bh = (\cdots, \eta_{-1}, \eta_0, \eta_1, \dots)$ is the doubly infinite 
sequence given in 6.1. In order to prove (i), it is enough to see that 

\begin{equation*}
\tag{6.26.1}
\begin{aligned}
\Phi(\ul T_{\eta_0}\ul T_{\eta_{-1}}\cdots \ul T_{\eta_{k+1}}(\ul f_{\eta_k}))
          &= \pi(\wt T_{\eta_0}\wt T_{\eta_{-1}}\cdots \wt T_{\eta_{k+1}}(\wt f_{\eta_k})), 
          &     &\quad\text{ if $k \le 0$, }  \\
\Phi(\ul T_{\eta_1}\iv \ul T_{\eta_{2}}\iv \cdots \ul T_{\eta_{k-1}}\iv(\ul f_{\eta_k}))
          &= \pi(\wt T_{\eta_1}\iv\wt T_{\eta_2}\iv\cdots 
                             \wt T_{\eta_{k-1}}\iv(\wt f_{\eta_k})), 
          &     &\quad\text{ if $k \ge 1$. }.
\end{aligned}
\end{equation*}
This is reduced to the case where the rank of $\ul X$ is 2. As we have
excluded the case where $\ul X$ is of type $A_2^{(2)}$ or $A_1^{(1)}$, $\ul X$ 
is of finite type of rank 2. We have verified (6.26.1) in [SZ1] 
in the case where $\s$ is admissible with $\ve \ne 4$. Thus the remaining one is the case 
where $(X, \ul X)$ is of type $(A_4, C_2)$, which was already verified in 
the proof of Theorem 3.7. 
Hence (6.26.1) holds, and (i) follows.  (ii) follows from (i), and 
(iii) follows from (ii).  The theorem is proved.    
\end{proof}

\par\bigskip

\par\vspace{1.5cm}
\noindent
Y. Ma \\
School of Mathematical Sciences, Tongji University \\ 
1239 Siping Road, Shanghai 200092, P.R. China  \\
E-mail: \verb|1631861@tongji.edu.cn|

\par\vspace{1.5cm}
\noindent
T. Shoji \\
School of Mathematica Sciences, Tongji University \\ 
1239 Siping Road, Shanghai 200092, P.R. China  \\
E-mail: \verb|shoji@tongji.edu.cn|

\par\vspace{0.5cm}
\noindent
Z. Zhou \\
School of Mathematical Sciences, Tongji University \\ 
1239 Siping Road, Shanghai 200092, P.R. China  \\
E-mail: \verb|forza2p2h0u@163.com|


\begin{thebibliography}{[DJMu]}
\bibitem [BN] {BN} J. Beck and H. Nakajima; Crystal bases and two-sided cells 
of quantum affine algebras; Duke Math. J. {\bf 123} (2004), 335 - 402. 
\par
\bibitem [K] {K} V.G. Kac; Infinite dimensional Lie algebras (3rd Ed.), 
Cambridge University Press, 1990. 

\par
\bibitem [L1] {L1} G. Lusztig; Introduction to quantum groups, Progress in Math.
Vol. {\bf 110} Birkhauser, Boston/Basel/Berlin, 1993.
\par
\bibitem [L2] {L2} G. Lusztig; Piecewise linear parametrization of canonical bases,
Pure Appl. Math. Quartt. {\bf 7} (2011), 783-796. 
\par
\bibitem [L3] {L3} G. Lusztig; Affine quivers and canonical bases, I.H.E.S. Publ. Math.
{\bf 76} (1992), 111-163.
\par
\bibitem [L4] {L4} G. Lusztig; Aperiodicity in quantum affine $\Fg\Fl_n$, 
Asian J. Math. {\bf 3} (1999), 147-177. 
\par
\bibitem [MT] {MT} D. Muthiah and P. Tingley; Affine PBW-bases and affine MV-polytopes,
Selecta Math. {\bf 24} (2018), 4781-4810.
\par
\bibitem [MSZ] {MSZ} Y. Ma, T.Shoji and Z. Zhou; Diagram automorphisms and 
canonical bases for quantized enveloping algebras, preprint, arXiv: 2108.06673.
\par
\bibitem [S] {S} O. Schiffmann; Lectures on canonical and crystal bases 
of Hall algebras, preprint, arXiv:0910.4460
\par
\bibitem [SZ1] {SZ1} T. Shoji and Z. Zhou; Diagram automorphisms and quantum groups, 
J. Math. Soc. Japan, {\bf 72}, (2020) 639-671. 
\par
\bibitem [SZ2] {SZ2} T. Shoji and Z. Zhou; Diagram automorphisms and canonical bases for
quantum affine algebras, J. Algebra, {\bf 569} (2021), 67-110.  
\end{thebibliography}
\end{document}